\DeclareMathOperator{\Lie}{Lie}
\DeclareMathOperator{\ad}{ad}
\DeclareMathOperator{\Der}{Der}
\DeclareMathOperator{\Char}{char}
\setlist[enumerate,1]{itemsep=3pt,topsep=3pt}
\newcolumntype{M}[1]{>{\centering\arraybackslash}m{#1}}
\definecolor{burntumber}{rgb}{0.54, 0.2, 0.14}
\definecolor{coolblack}{rgb}{0.0, 0.18, 0.39}
\definecolor{darkterracotta}{rgb}{0.8, 0.31, 0.36}
\definecolor{frenchbeige}{rgb}{0.65, 0.48, 0.36}
\def\@footnotecolor{red}
\patchcmd{\@footnotemark}{\hyper@linkstart{link}}{\hyper@linkstart{footnote}}{}{}
\newcounter{rownum}
\newcommand{\Ind}{\mathrm{Ind}}
\newtheorem{lemma}{Lemma}[chapter]
\newtheorem{theorem}[lemma]{Theorem}
\newtheorem*{mainproblem}{Main Problem}
\newtheorem{cor}[lemma]{Corollary}
\newtheorem{conjecture}[lemma]{Conjecture}
\newtheorem{prop}[lemma]{Proposition}
\theoremstyle{definition}
\newtheorem{example}[lemma]{Example}
\newtheorem*{algorithm*}{Algorithm}
\newtheorem{defn}[lemma]{Definition}
\theoremstyle{remark}
\newtheorem{remark}[lemma]{Remark}
\newtheorem{remarks}[lemma]{Remarks}
\newtheorem*{acknowledgment}{Acknowledgment}
\newcommand{\Hom}{\mathrm{Hom}}
\newcommand{\opH}{\mathrm{H}}
\newcommand{\D}{\mathscr{D}}
\newcommand{\SO}{\mathrm{SO}}
\newcommand{\Sp}{\mathrm{Sp}}
\newcommand{\SL}{\mathrm{SL}}
\newcommand{\Cl}{\mathrm{Cl}}
\newcommand{\PGL}{\mathrm{PGL}}
\newcommand{\PSp}{\mathrm{PSp}}
\newcommand{\Spin}{\mathrm{Spin}}
\newcommand{\GL}{\mathrm{GL}}
\newcommand{\RR}{\mathscr{R}}
\newcommand{\End}{\mathrm{End}}
\newcommand{\Aut}{\mathrm{Aut}}
\newcommand{\Z}{\mathbb{Z}}
\newcommand{\g}{\mathfrak{g}}
\newcommand{\m}{\mathfrak{m}}
\newcommand{\p}{\mathfrak{p}}
\renewcommand{\l}{\mathfrak{l}}
\newcommand{\h}{\mathfrak{h}}
\newcommand{\Gm}{\mathbb{G}_m}
\newcommand{\C}{\mathbb{C}}
\newcommand{\so}{\mathfrak{so}}
\renewcommand{\sl}{\mathfrak{sl}}
\renewcommand{\sp}{\mathfrak{sp}}
\newcommand{\Ga}{\mathbb{G}_a}
\begin{document}

\title{Complete reducibility and subgroups of exceptional algebraic groups}
\author{Alastair J.\ Litterick, David I.\ Stewart, Adam R.\ Thomas} 

\maketitle

\begin{abstract}
This survey article has two components. The first part gives a gentle introduction to Serre's notion of $G$-complete reducibility, where $G$ is a connected reductive algebraic group defined over an algebraically closed field. The second part concerns consequences of this theory when $G$ is simple of exceptional type, specifically its role in elucidating the subgroup structure of $G$. The latter subject has a history going back about sixty years. We give an overview of what is known, up to the present day. We also take the opportunity to offer several corrections to the literature.
\end{abstract}

\tableofcontents

\begin{acknowledgment}
All three authors wish to thank Martin Liebeck for support and guidance from their PhD studies at Imperial College London through to the present day.

\noindent The second author is supported by Leverhulme grant RPG-2021-080.

\noindent The third author is supported by EPSRC grant EP/W000466/1.
\end{acknowledgment}

\section*{Introduction}

Let $G$ be an affine algebraic group over an algebraically closed field $k$ of characteristic $p\geq 0$. In this article we will only be interested in the case that $G$ is smooth, connected and reductive. Except in one or two places, the subgroups of $G$ encountered in this article will be smooth, and thus we think of $G$ as a variety as per \cite{MR0396773}, rather than adopting the scheme-theoretic language of \cite{milne17}. In any case, we may assume that $G$ is a subgroup of some general linear group defined by the vanishing of some polynomials in the matrix entries---more specifically by a radical ideal. Throughout, all vector spaces, representations and general linear groups will be of finite dimension.

The idea of $G$-complete reducibility is due to J-P.~Serre \cite{Ser3}. It generalises the property of a representation $\rho:H\to G=\GL(V)$ of a group $H$ being completely reducible, by restating the definition in terms of the relationship between the subgroup $\rho(H)\subseteq G$ and the parabolic subgroups of $G$. In this way, the same property can be formulated when $G$ is any connected reductive algebraic group and $H$ is one of its subgroups. 

The notion of $G$-complete reducibility appears in many unexpected places, by virtue of the links it offers between representation theory, group theory, algebraic geometry and geometric invariant theory. For the purposes of this article, it offers the cleanest language to talk about the subgroup structure of $G$. 

Unless otherwise mentioned, we only consider closed subgroups of $G$---those cut out from $G$ by polynomial equations. A linear algebraic group is \emph{unipotent} if it is isomorphic to a subgroup of the upper unitriangular matrices in $\GL(V)$ for some $V$. As the rank of $G$ grows, one sees that it quickly becomes impossible to say very much about the unipotent subgroups of $G$, in much the same way that finite $p$-groups become unmanageable. The same problem arises when $H$ is allowed to have a non-trivial normal connected unipotent subgroup.  Therefore we focus on those subgroups $H$ of $G$ which do not contain a non-trivial normal connected unipotent subgroup; in other words $H$ is \emph{reductive}. Reductive subgroups are also interesting from a geometric perspective, as they are precisely those subgroups $H$ such that the coset space $G/H$ is affine \cite{MR437549}.

Next, a subgroup $H\subseteq\GL(V)$ is the same thing as a faithful representation of $H$. Since we do not want to consider the representation theory of all finite groups, we will assume that $H$ is \emph{connected.} Further still, a connected reductive group $H$ takes the form $H=\D(H)\cdot \RR(H)$ where $\RR(H)$ is a central torus of $H$ and $\D(H)$ is the semisimple derived subgroup of $H$. It would be unenlightening to enumerate all tori in $G$ which commute with $\D(H)$, and so:
\begin{equation}\text{we assume that $H$ is semisimple.}\nonumber \end{equation}

This is still not enough in general to attempt a classification, unless $p=0$. For, in positive characteristic a non-trivial semisimple group has a rich theory of indecomposable modules and it seems hopeless to classify \emph{all} possibilities; therefore one cannot classify all the subgroups of $\GL(V)$ with $\dim V$ unbounded. 

On a more sanguine note: since \emph{irreducible} representations of $H$ are classified by their highest weight, the \emph{completely reducible} representations correspond to lists of such weights. If $p = 0$ then all representations are completely reducible and Weyl's dimension formula gives the dimensions of the irreducible factors, so listing semisimple subgroups of $G=\GL(V)$ reduces to straightforward combinatorics with roots systems and weights. In positive characteristic, understanding subgroups acting completely reducibly means understanding the dimensions of irreducible modules. This is a hard problem. The most recent progress on this problem is due to Williamson and his collaborators---for example, see \cite{RW21}---but regrettably there is no clear description for the dimensions of the irreducible $H$-modules in all characteristics, unless the rank of $H$ is small.

But now suppose that we bound the rank of $G$; more specifically take $G$ to be simple of exceptional type, so that the rank of $G$ is at most $8$. Then there is a realistic prospect of understanding the poset of conjugacy classes of semisimple subgroups of $G$ in all characteristics: any semisimple subgroup $H\subseteq G$ also has rank at most $8$.\begin{mainproblem} Let $G$ be a simple algebraic group of exceptional type. Describe the poset of conjugacy classes of semisimple subgroups of $G$.\end{mainproblem}
Such a project naturally divides along lines prescribed by $G$-complete reducibility, which we explain in \S\ref{sec:strat}. With this in mind, our purpose is twofold:
\begin{enumerate}
  \item We introduce $G$-complete reducibility and the links it provides between representation theory, geometric invariant theory and group theory. We will be light on technical details, but aim to impart the flavour of the techniques and the most important results.
	\item We discuss the current state of affairs in describing semisimple subgroups of the exceptional algebraic groups. We start with a historical overview and then collate the principal results from across the literature. We also correct some errors and omissions that have arisen in this study. Significantly, we update the table in \cite{Stewart21072013}; see Table~\ref{tab:newimrn}.
\end{enumerate}

\subsection*{Prerequisite knowledge}
This is an article about linear algebraic groups over algebraically closed fields and so a healthy knowledge of such groups would be appropriate. Most of the results here do not require a scheme-theoretic background, and for these one of \cite{MR0396773,Spr98,MR1102012} would suffice. A particularly accessible overview of the theory surrounding parabolic subgroups, their Levi factors and associated combinatorics can be found in \cite{MR2850737}. Our use of representation theory will not frequently stray far from the classification of irreducible modules by highest weight, which can be found in these same references. Occasionally a discussion of cohomology takes us into the world of \cite{Jan03}; though we will typically content ourselves with pointing out references to deeper material when appropriate. At points, knowledge of the theory of finite-dimensional complex Lie algebras would be helpful, as covered for example in \cite{MR499562,MR1153249}. 

\subsection*{Notation}
We will mostly introduce relevant notation as needed. Dynkin diagrams and conventions on roots will be as in \cite{Bourb05}. Actions will always be on the left; thus conjugation will be written ${}^gh=ghg^{-1}$. In keeping with this, a group $G$ which decomposes into a semidirect product of a normal subgroup $N$ with a complement $H$ will be written $N\rtimes H\cong G=NH$, since then $(n,h)(m,k)=(n{}^hm,hk)$ maps to $n{}^hmhk$ under this isomorphism.

As mentioned, our algebraic groups are all varieties over algebraically closed fields, and can be thought of as subgroups of an ambient group $\GL(V)$ for some finite-dimensional vector space $V$. The identity component of an algebraic group $G$ is denoted $G^{\circ}$, and $G/G^{\circ}$ is the (finite) component group.

\subsection*{Structure of the paper}

The structure of the paper is as follows. In Part I, \S\S\ref{sec:gcr}--\ref{sec:redalggrp} motivate the theory of $G$-complete reducibility as the natural generalisation of the representation-theoretic notion. This includes a sufficient overview of reductive algebraic groups to state the fundamental definition (Definition~\ref{def:gcr}) and discuss the most useful tools for our applications. In \S\ref{sec:strat} we describe a strategy for classifying subgroups of reductive groups, which arises naturally from the dichotomy between subgroups of $G$ which are $G$-completely reducible, and those which are not.

In Part II we turn to the particular problem of understanding semisimple subgroups of exceptional simple algebraic groups. We begin with a brief historical overview of results in characteristic zero (essentially due to Dynkin) and their translation to positive characteristic (largely due to Seitz and his collaborators), before delving into the current state of the art, and the application of the strategy described in Part I. Finally, in \S\ref{sec:further} we discuss further ongoing research directions in the area.

\addtocounter{chapter}{1}
\chapter*{Part I. \texorpdfstring{$G$}{G}-complete reducibility}

\section{Complete reducibility} \label{sec:gcr}
Almost the first result one encounters in group representation theory is Maschke's Theorem: Given a finite group $H$ and a finite-dimensional $\C H$-module $V$, or equivalently a representation $H \to \GL(V)$, every $H$-submodule (i.e.~$H$-stable subspace) $U \subseteq V$ admits an $H$-stable complement $U'$, so that $V \cong U \oplus U'$ as $H$-modules. The proof proceeds by taking a vector-space direct sum $V = U_0 \oplus U$ and averaging the projection map $\phi: V \to V/U_0 = U$ over $H$ to form the $H$-module homomorphism
\[ V \to U, \qquad v \mapsto \frac{1}{|H|} \sum_{h \in H} h\phi(h^{-1}v) \]
whose kernel is then the required submodule $U'$. An identical proof works over any field $k$ (in fact, over any commutative ring) in which the order $|H|$ is invertible, in particular Maschke's theorem holds whenever $\Char k$ is sufficiently large relative to $|H|$. The analogous result (a consequence of the Peter--Weyl theorem) holds for unitary representations of connected compact topological groups, in particular for compact real Lie groups. Further still, given a complex semisimple Lie group, its Lie algebra is obtained by complexifying the (real) Lie algebra of a compact real Lie group -- and the finite-dimensional representations of all these objects are essentially the same (this is Weyl's unitarian trick \cite[\S 5]{MR1544744}\footnote{For an accessible overview in English, see \cite{Sury}.}). For a direct approach to semisimple complex Lie algebras, one can also follow \cite[C.15]{MR1153249}.

These are some examples of module categories which are \emph{completely reducible} or \emph{semisimple}. For a module of finite dimension over a field, repeated decomposition into submodules expresses it as a direct sum of \emph{irreducible} modules, i.e.~non-zero modules having no proper, non-zero submodules. So when all modules are completely reducible, understanding the category amounts to understanding the irreducible modules. Complete reducibility is certainly not ubiquitous, however. A common example is the following representation of the group of integers under addition: \[\Z \to \GL_2(\C);\quad n \mapsto \begin{pmatrix} 1 & n \\ 0 & 1 \end{pmatrix},\] whose image stabilises a unique $1$-dimensional subspace, spanned by $\binom{1}{0}$, which admits no complementary $\Z$-submodule. Performing a reduction modulo a prime $p$ produces a cyclic group of order $p$ acting on a module over a field of characteristic $p$, which again fails to be completely reducible.\footnote{Note that the condition of Maschke's theorem is violated, since the characteristic $p$ of the field divides $|\Z/p|=p$.}

This formulation of complete reducibility places the focus on the acting group $H$. One could equally decide to fix the target of the representation and ask: \begin{quote}\emph{Given a finite-dimensional vector space $V$, for which subgroups $H$ of $G = \GL(V)$ is $V$ a completely reducible $H$-module?}\end{quote} This is now a question about the subgroup structure of $G$. It may also be more tractable, since $H$ is either trivial or has a faithful module of dimension at most $\dim V$, which puts limitations on $H$. Moreover, this reformulation can be put in purely group-theoretic terms. Recall that a \emph{parabolic subgroup} $P$ of $\GL(V)$ is the stabiliser of a flag of subspaces
\[ V = V_0 \supset V_1 \supset \cdots \supset V_{r} = \{0\}. \]
(See for example \cite[Proposition~12.13]{MR2850737}.) One sees from this that the maximal (proper) parabolic subgroups are stabilisers of (proper, non-zero) subspaces, so a subgroup $H \subseteq \GL(V)$ acts irreducibly if and only if $H$ is contained in no proper parabolic subgroup, and $H$ is completely reducible if and only if: whenever $H$ stabilises a flag of subspaces $V_{i}$ as above, $H$ stabilises an \emph{opposite flag}
\[ V = W_0 \supset W_1 \supset \cdots \supset W_{r} = \{0\}. \]
where $V = V_i \oplus W_{r-i}$ as $H$-modules for all $i$. In other words, whenever $H$ is contained in a parabolic subgroup of $\GL(V)$, it is also contained in an \emph{opposite} parabolic subgroup $P^-$ of $G$. The subgroup of $P$ acting trivially on each quotient $V_{i}/V_{i+1}$ is upper unitriangular according to an appropriate basis, and is therefore a unipotent group. Indeed this turns out to be the largest normal connected unipotent subgroup of $P$, i.e.~its \emph{unipotent radical} $\RR_u(P)$. The intersection $L:=P\cap P^-$ turns out to be a reductive complement to $\RR_u(P)$ in $P$, in other words a \emph{Levi subgroup}. In $\GL(V)$, if $P$ corresponds to a flag as above then Levi subgroups of $P$ are stabilisers of vector-space direct-sum decompositions giving that flag as intermediate sums. That is, a decomposition $V = U_1 \oplus \cdots \oplus U_{r}$, where $V_i = U_{i+1} \oplus U_{i+2} \oplus \cdots \oplus U_{r}$ for each $i$, corresponds to the Levi factor $\GL(U_1) \times \ldots \times \GL(U_r)$. By change of basis, one sees that Levi subgroups of a fixed $P$ are all conjugate (even by elements of $\RR_u(P)$), so that a choice of basis in which the $V_i$ and $W_i$ are spanned by standard basis vectors leads to the following picture:

{\small
\begin{align*}
\renewcommand\arraystretch{1.4}
\underbrace{
\left(\begin{array}{cccc}
  \cline{1-1}
  \multicolumn{1}{|c}{\ast} & \multicolumn{1}{|c}{\ast} & \cdots & \ast \\ \cline{1-2}
  \mathbf{0} & \multicolumn{1}{|c}{\ast} & \multicolumn{1}{|c}{\ddots} & \vdots \\ \cline{2-3}
  \vdots & \ddots & \multicolumn{1}{|c}{\ddots} & \multicolumn{1}{|c}{\ast} \\ \cline{3-4}
  \mathbf{0} & \cdots & \mathbf{0} & \multicolumn{1}{|c|}{\ast} \\ \cline{4-4}
  \end{array}\right)
  }_{\displaystyle P}
  \cong 
    \underbrace{
  \left(\begin{array}{*{4}{c}}
  \cline{1-1}
  \multicolumn{1}{|c}{I_{n_1}} & \multicolumn{1}{|c}{\ast} & \cdots & \ast \\ \cline{1-2}
  \mathbf{0} & \multicolumn{1}{|c}{I_{n_2}} & \multicolumn{1}{|c}{\ddots} & \vdots \\ \cline{2-3}
  \vdots & \ddots & \multicolumn{1}{|c}{\ddots} & \multicolumn{1}{|c}{\ast} \\ \cline{3-4}
  \mathbf{0} & \cdots & \mathbf{0} & \multicolumn{1}{|c|}{I_{n_r}} \\ \cline{4-4}
  \end{array}\right)
  }_{\displaystyle \RR_u(P)}
    \rtimes
    \underbrace{
\left(\begin{array}{*{4}{c}}
  \cline{1-1}
  \multicolumn{1}{|c}{\ast} & \multicolumn{1}{|c}{\mathbf{0}} & \cdots & \mathbf{0} \\ \cline{1-2}
  \mathbf{0} & \multicolumn{1}{|c}{\ast} & \multicolumn{1}{|c}{\ddots} & \vdots \\ \cline{2-3}
  \vdots & \ddots & \multicolumn{1}{|c}{\ddots} & \multicolumn{1}{|c}{\mathbf{0}} \\ \cline{3-4}
  \mathbf{0} & \cdots & \mathbf{0} & \multicolumn{1}{|c|}{\ast} \\ \cline{4-4}
  \end{array}\right)}_{\displaystyle L}
\end{align*}
}

\section{Reductive algebraic groups} \label{sec:redalggrp}

The above discussion generalises at once to other groups with an appropriate notion of parabolic subgroup and Levi subgroup; in particular when $G$ is a reductive algebraic group. The formal definition in this case is that a subgroup $P$ of $G$ is parabolic if $G/P$ is a projective variety. However, a more useful characterisation for us can be given using the structure theory of reductive groups, which we now outline.

Amongst connected linear algebraic groups, the simple objects are by definition those which are non-abelian and have no non-trivial proper connected normal subgroups. Such groups are determined up to isogeny---i.e.~a homomorphism with a finite kernel---by the algebraically closed field $k$ and one of the Dynkin diagrams below, which are divided into those of \emph{classical type} $A_n$ $(n \ge 1)$, $B_n$ ($n\geq 2$), $C_n$ ($n\geq 3$), $D_n$ ($n\geq 4$), and those of \emph{exceptional type} $E_6$, $E_7$, $E_8$, $F_4$ or $G_2$.

\begin{center} \label{dynkins}
\begin{tabular}{cc}
$A_n$ & \dynkin[%
      labels={1,2,3,n-1,n},
      text style/.style={scale=1},
      label macro/.code={\drlap{#1}},
      label directions={,,,below left,}]A{ooo.oo} \\ { } \\
$B_n$ & \dynkin[%
      labels={1,2,{},n-1,n},
      text style/.style={scale=1},
      label macro/.code={\drlap{#1}},
      label directions={,,,below left,}]B{oo.ooo} \\ { } \\
$C_n$ & \dynkin[labels={1,2,{},n-1,n},text style/.style={scale=1},
      label macro/.code={\drlap{#1}},
      label directions={,,,below left,}]C{oo.ooo} \\ { } \\
$D_n$ & \hspace{1.2em} \dynkin[%
      labels={1,2,{},n-2,n-1,n},
      text style/.style={scale=1},
      label macro/.code={\drlap{#1}},
      label directions={,,,right,,}]D{} \\
\shortstack{$E_n$ \\ $(n = 6,7,8)$} & \dynkin[label,labels={1,2,3,4,5,n},text style/.style={scale=1},
      label macro/.code={\drlap{#1}}]E{ooooo.o} \\
$F_4$ & \dynkin[label,text style/.style={scale=1},
      label macro/.code={\drlap{#1}}]F4 \\
$G_2$ & \dynkin[%
      label,
      reverse arrows,
      text style/.style={scale=1},
      label macro/.code={\drlap{#1}}]G2 \\
\end{tabular}
\end{center}
We also need to mention the additive group of the field $k$, often denoted $\Ga$, and the multiplicative group $k^{\ast}$, denoted $\Gm$. Since our groups are varieties over an algebraically closed field, it follows that groups with a subnormal series whose successive quotients are all $\Ga$ coincide with connected unipotent groups. Similarly, linear algebraic groups with a composition series whose quotients are isomorphic to $\Gm$ are precisely direct products $\Gm^r$ for some $r$, and are called \emph{tori}. It is then a theorem that a a connected soluble linear algebraic group is the semidirect product of a connected unipotent group and a torus.

Tori play an important role in the structure theory of reductive groups. All maximal tori in a linear algebraic group are conjugate to one another, and their dimension is called the \emph{rank} of the group. This is the number $n$ of nodes in the Dynkin diagram when $G$ is simple. For a maximal torus $T$ of $G$, we let $W(G) = N_G(T)/T$ be the Weyl group of $G$ with respect to $T$.

The simply-connected groups of type $A_n$--$D_n$ are respectively $\SL_{n+1}$, $\Spin_{2n+1}$, $\Sp_{2n}$ and $\Spin_{2n}$, and from these one obtains the others as quotients with finite kernels. For our purposes there is often no harm in working with $\SL_{n+1}$, $\SO_n$ and $\Sp_{2n}$ which have more accessible descriptions in terms of the natural module and its quadratic or symplectic form.

A linear algebraic group $G$ contains a unique maximal connected normal soluble subgroup $\RR(G)$, the \emph{radical} of $G$, containing the unique maximal connected normal unipotent subgroup $\RR_u(G)$ of $G$, the \emph{unipotent radical}. Then $G$ is called:
\begin{itemize}
  \item[$\cdot$] \emph{reductive} if $\RR_u(G)$ is trivial; this is equivalent to $G^\circ$ being an almost-direct product\footnote{commuting product of normal subgroups, with pairwise finite intersections} of simple algebraic groups and a torus.
  \item[$\cdot$] \emph{semisimple} if $G$ is connected and $\RR(G)$ is trivial; this is equivalent to $G$ being an almost-direct product of simple algebraic groups.
\end{itemize}
For any linear algebraic group $G$, the quotient $G/\RR_u(G)$ is reductive and if $G$ is connected then $G/\RR(G)$ is semisimple.

From now on let $G$ be connected. We are interested in finite-dimensional \emph{rational} representations of $G$, which can be identified with homomorphisms $G \to \GL(V)$ of algebraic groups. One easy case to describe is when $G=\Gm$. The irreducible representations of $\Gm$ are $1$-dimensional; if $V=\langle v\rangle$ is one such, then $x\cdot v=x^rv$ for some $r\in\Z$ and so corresponds with a homomorphism $x\mapsto x^r$ of $\Gm$ to $\Gm$. Even better, $\Gm$ always acts completely reducibly, so that a representation up to isomorphism is identified with a list of integers. More generally if $G=T$ is a torus $T\cong \Gm\times\dots\times\Gm$, then an irreducible representation $V$ is still $1$-dimensional, determined up to isomorphism by the action \[(x_1,\dots,x_s)\cdot v=x_1^{r_1}\cdots x_s^{r_s}v,\] which identifies with an element $\lambda\in \Hom(T,\Gm)=:X(T)$; then $\lambda$ is called a \emph{weight} of $T$. (We often identify $\lambda$ with the corresponding $1$-dimensional representation.) Since $T$ acts completely reducibly, a representation for $T$ is simply a list of its weights. 

Taking inspiration from the theory of Lie groups, one can construct a Lie algebra $\g=\Lie(G)$ from $G$ which affords a representation of $G$ through an \emph{adjoint action}. Since maximal tori in $G$ are conjugate, the collection of weights of a maximal torus $T$ on $\Lie(G)$ does not depend on the choice of $T$. The zero weight-space is $\Lie(T)$, and when $G$ is reductive, the non-zero weights are called \emph{roots} and form the \emph{root system} which is denoted $\Phi = \Phi(G,T)$. Also by the conjugacy of maximal tori, the isomorphism type of the Weyl group $W(G)$ is independent of $T$, and $W(G)$ acts naturally on $X(T)$ and $\Phi(G,T)$.

A reductive group $G$ has a maximal connected soluble subgroup $B$ called a \emph{Borel subgroup} which by Borel's fixed point theorem is unique up to conjugacy in $G$. This decomposes as $B=US$, where $U = \RR_u(B)$ is a maximal connected unipotent subgroup of $G$, and $S$ is a maximal torus of $G$. When $S=T\subseteq B$, the set of roots $\Phi(B,T)$ contains exactly half the elements of $\Phi(G,T)$ and defines a positive system $\Phi^+\subset \Phi$. Moreover, $\Phi$ has a \emph{base of simple roots} $\Delta$: a minimal subset of $\Phi^+$ from which all elements of $\Phi^+$ are obtained as non-negative integer sums. We call $|\Delta|$ the \emph{semisimple rank} of $G$. One can define a scalar product (the Killing form) on the $\mathbb{R}$-linear span of the roots; the Dynkin diagram of $G$ then encodes the resulting lengths and relative angles of the simple roots.

Within $X(T)$ we single out the \emph{dominant weights}, $X(T)^{+}$, which are those having non-negative inner product with each positive root. If $\lambda\in X(T)^+$ then we identify $\lambda$ with the corresponding $1$-dimensional $T$-module. Composing with the map $B\to B/U\cong T$, we get a $B$-module $\lambda$. One can define an induced module $\opH^0(\lambda):=\Ind_B^G(\lambda)$ in the category of rational $G$-modules. Since $G/B$ is projective, this module has finite dimension, and can be viewed as a reduction modulo $p$ of the irreducible module $L_{\mathbb{C}}(\lambda)$ for the complex Lie algebra $\g_{\mathbb{C}}$, so the weights of $\Ind_B^G(\lambda)$ are given by Weyl's character formula. If we define $L(\lambda)$ to be the socle of $\Ind_B^G(\lambda)$ then it turns out to be simple, with $\lambda$ as its highest weight. These turn out to be all the irreducible modules: \cite[II.2.4]{Jan03}.\footnote{Another reduction modulo $p$ gives the Weyl module $V(\lambda)$, which has $L(\lambda)$ as its head.}

Parabolic subgroups $P$ of $G$ can now be characterised as follows: $P$ is any subgroup containing a Borel subgroup. Fixing $B\subseteq P$ it turns out that $\Phi(P,T)$ is an enlargement of $\Phi(B,T)$ obtained by a suitable choice of simple roots $\Delta'\subseteq\Delta$ and taking the smallest additively-closed subset of $\Phi(G,T)$ containing $\Phi(B,T)$ and $-\Delta'$. If $r$ is the semisimple rank of $G$, then there are $2^r$ non-conjugate parabolic subgroups containing a given Borel subgroup $B$; these correspond to the possible subsets of nodes in the Dynkin diagram. The remaining parabolic subgroups are all conjugate to one of these under the action of $G$.\label{parabsec}

Just as we saw in $\GL(V)$, a parabolic subgroup admits a \emph{Levi decomposition} $P = \RR_u(P)\rtimes L$, where $L$ is a reductive group called a \emph{Levi subgroup} of $P$; all such Levi subgroups are conjugate by elements of $\RR_u(P)$. If one insists that $T\subseteq L$ then $L$ is unique, and the Levi decomposition can be seen at the level of roots. If $P$ corresponds to $\Delta'$ then the roots arising as sums of roots contained in $\Delta' \cup-\Delta'$ give those of $\Phi(L,T)$; their complement in $\Phi(P,T)$ are the roots in $\RR_u(P)$. 

\begin{example}
Let $G$ be simple of type $F_4$. Elementary root system combinatorics (see e.g.~\cite[\S 2.10]{MR1066460}) tell us that $G$ has $48$ roots (hence $24$ positive roots). If we pick the two middle nodes of the Dynkin diagram of $G$ (cf.~page \pageref{dynkins}), the corresponding roots and their negatives generate a subsystem of type $B_2$, which has $8$ roots. So in the corresponding parabolic subgroup $P = \RR_u(P)L$ of $G$, the derived subgroup of $L$ will be simple of type $B_2$, which has dimension $10$ ($8$ roots, plus a $2$-dimensional maximal torus). The connected centre of $L$ will be a $2$-dimensional torus since $L$ contains a maximal torus of $G$, which has rank $4$. Finally, counting positive roots in $G$ and $L$, the unipotent radical $\RR_u(P)$ has dimension $24 - 4 = 20$.

In fact, much of the structure of $\RR_u(P)$ as an $L$-group can also be quickly deduced from the root system. We will return to this in \S\ref{ss:cohom}.
\end{example}

The following result \cite[Theorem~2.5]{MR0294349} is fundamental in our study and will henceforth be called the \emph{Borel--Tits theorem}.
\begin{theorem}[Borel--Tits]\label{thm:bt}
Let $G$ be a connected reductive algebraic group, and let $X$ be a unipotent subgroup of $G$. Then there exists a (canonically-defined) parabolic subgroup $P$ of $G$ such that $X \subseteq \RR_u(P)$.
\end{theorem}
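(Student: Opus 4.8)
The plan is to disentangle the two claims packed into the statement. \emph{Existence} of a parabolic $P$ with $U\subseteq\RR_u(P)$ is cheap: if $U=1$ take $P=G$ (here $\RR_u(G)=1$ since $G$ is reductive); otherwise $G$ is not a torus, and the unipotent group $U$ acts with a fixed point on the projective variety of Borel subgroups of $G$, so $U\subseteq B$ for some Borel $B$, hence $U\subseteq\RR_u(B)$ because $\RR_u(B)$ is a maximal unipotent subgroup of $G$; as $B$ is a proper parabolic, we may take $P=B$. So the real content is \emph{canonicity}: producing $P$ by a recipe that uses no choices, hence is equivariant under every automorphism of $G$ stabilising $U$. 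Since a parabolic subgroup is its own normaliser, canonicity immediately yields the stronger and more useful conclusion $N_G(U)\subseteq P$, which is how I would phrase the target.

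I would prove this by induction on $\dim G$, the engine being the key lemma of Borel and Tits: \emph{if $U\neq 1$ then $N_G(U)$ lies in some proper parabolic subgroup of $G$} --- not yet the canonical one, and not yet with $U$ inside its unipotent radical. Granting this, choose such a proper parabolic $P_1$, fix a Levi decomposition $P_1=\RR_u(P_1)\rtimes L$, and let $\bar U$ be the image of $U$ under $P_1\twoheadrightarrow L$, a unipotent subgroup of the reductive group $L$. Since $\RR_u(P_1)\neq 1$ we have $\dim L<\dim G$, so the inductive hypothesis furnishes a canonical parabolic $\bar P$ of $L$ with $\bar U\subseteq\RR_u(\bar P)$ and $N_L(\bar U)\subseteq\bar P$ (take $\bar P=L$ if $\bar U=1$). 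Under the order-preserving bijection between parabolics of $L$ and parabolics of $G$ contained in $P_1$, let $P$ correspond to $\bar P$; then $P\subseteq P_1$ is proper in $G$, and $\RR_u(P)$ is the preimage of $\RR_u(\bar P)$ under $P\to\bar P$. Since $\bar U\subseteq\RR_u(\bar P)$ this gives $U\subseteq\RR_u(P)$, and since the image of $N_{P_1}(U)$ in $L$ lies in $N_L(\bar U)\subseteq\bar P$ it gives $N_{P_1}(U)\subseteq P$; combined with $N_G(U)\subseteq P_1$ this yields $N_G(U)\subseteq P$. Finally one checks that the $P$ so produced does not depend on the auxiliary choice of $P_1$ (two choices feed into the recursion to give the same final parabolic), so $P$ is canonical; in particular every automorphism of $G$ fixing $U$, and a fortiori $N_G(U)$, stabilises it.

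The one genuinely substantial input --- and the step I expect to be the main obstacle --- is the Borel--Tits key lemma: that a non-trivial unipotent subgroup $U$ of a reductive group has its normaliser inside a proper parabolic. The underlying idea is to pass from a nilpotent group element to a one-parameter subgroup: for $1\neq u\in U$, since $u$ is unipotent one produces a non-central cocharacter $\lambda:\Gm\to G$ with $\lim_{t\to 0}\lambda(t)u\lambda(t)^{-1}=1$ (taking $\lambda$ a regular dominant cocharacter already does this), so that $u\in\RR_u(P(\lambda))$ for the proper parabolic $P(\lambda)=\{\,g\in G:\lim_{t\to 0}\lambda(t)g\lambda(t)^{-1}\text{ exists}\,\}$. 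The difficulty is to upgrade this --- from a single element to all of $U$, and from ``$u\in\RR_u(P(\lambda))$'' to ``$N_G(U)\subseteq P(\lambda)$'' --- which requires $\lambda$, and hence $P(\lambda)$, to be pinned down \emph{canonically} from $u$ so that the symmetries of $u$ descend to $P(\lambda)$, and then to assemble the parabolics attached to the various elements of $U$ into one. This is exactly what Borel and Tits achieve by a careful analysis of the nilpotent operator $\Ad(u)-1$ on $\g=\Lie(G)$; anachronistically it also follows from the optimality theory of Kempf and Rousseau attaching a canonical ``most destabilising'' one-parameter subgroup to an unstable point. Past this lemma, the argument is pure bookkeeping with Levi decompositions.
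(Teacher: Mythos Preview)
The paper does not supply its own proof of this theorem; it simply quotes it from Borel--Tits \cite[Thm.~2.5]{MR0294349} as a foundational input. So there is no ``paper's proof'' to compare against, only the original construction in the literature.

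That said, your proposal has a genuine gap in exactly the place you flag as ``pure bookkeeping''. Your recursion begins by \emph{choosing} a proper parabolic $P_1 \supseteq N_G(U)$ and \emph{choosing} a Levi factor $L$ of $P_1$, then appeals to induction inside $L$. You then assert that ``the $P$ so produced does not depend on the auxiliary choice of $P_1$'', but this is precisely the hard part and you give no argument for it. Different choices of $P_1$ are not even comparable in general (they need not contain a common Borel), so it is not at all clear that feeding them into the recursion yields the same terminal parabolic. Without this, you have existence plus $N_G(U)\subseteq P$, but not canonicity, and canonicity is the whole point.

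The actual Borel--Tits construction sidesteps this by being choice-free from the outset: one iterates $U_0 = U$, $U_{i+1} = \langle U_i,\, \RR_u(N_G(U_i))\rangle$, an ascending chain of unipotent subgroups that stabilises at some $U_\infty$; then $P := N_G(U_\infty)$ is parabolic with $U_\infty = \RR_u(P)$. Every step uses only $U$ and intrinsic operations ($N_G$, $\RR_u$, subgroup generation), so canonicity is immediate. This also absorbs your ``key lemma'' rather than requiring it as a separate input: one shows directly that when the sequence stabilises, $N_G(U_\infty)$ contains a Borel. Your Kempf--Rousseau sketch is anachronistic but would work for the key lemma in good characteristic; however, it still leaves the independence-of-choices problem untouched.
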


This implies in particular that a maximal subgroup of a reductive group $G$ is either reductive or parabolic. And a maximal connected subgroup of a semisimple group $G$ is either semisimple or parabolic.

\section{\texorpdfstring{$G$}{G}-complete reducibility}

At last, we come to the central definition.
\begin{defn}[\protect{\cite[p.~19]{Ser3}},\ {\cite[\S 3.2.1]{MR2167207}}] \label{def:gcr}
Let $G$ be a connected reductive algebraic group over the algebraically closed field $k$.

A subgroup $H$ of $G$ is called \emph{$G$-completely reducible} ($G$-cr) if, whenever $H$ is contained in a parabolic subgroup $P$ of $G$, there exists a Levi subgroup $L$ of $P$ with $H \subseteq L$. Similarly, $H$ is called \emph{$G$-irreducible} ($G$-irr) if $H$ is contained in no proper parabolic subgroup of $G$; it is \emph{$G$-reducible} if it is not $G$-irr. Lastly $H$ is \emph{$G$-indecomposable} if $H$ is in no proper Levi subgroup of $G$ and \emph{$G$-decomposable} otherwise.
\end{defn}

\begin{remarks} \leavevmode
\begin{enumerate}
  \item An abstract subgroup $H$ of $G$ and its Zariski closure are contained in precisely the same closed subgroups of $G$, in particular, the same parabolic subgroups and Levi subgroups thereof. Therefore $H$ is $G$-cr if and only if its Zariski closure is, so it does no harm to work only with closed subgroups.
  \item For disconnected groups $G$, one can define \emph{R-parabolic} and \emph{R-Levi} subgroups of $G$ in terms of limits of certain morphisms. These coincide with parabolic and Levi subgroups when $G$ is connected, hence $G$-complete reducibility and many results here generalise to disconnected groups. A full discussion is beyond the scope of this article; we direct the reader to \cite[\S 6]{MR2178661} for a thorough overview.
\end{enumerate}
\end{remarks}

Many general results in the representation theory of groups can be viewed as cases of statements about $G$-complete reducibility, when $G$ is specialised to $\GL(V)$. We now collect some of these.

\subsection{Characteristic criteria for complete reducibility} \label{ss:criteria}
Jantzen proved in \cite{MR1635685} that if $H$ is connected reductive and $V$ is an $H$-module with $\dim V \leq p$ then $V$ is completely reducible; this bound was improved by McNinch in \cite{MR1476899}. A more general statement is:
\begin{theorem} \label{thm:gcr-red}
Let $G$ be a connected reductive algebraic group over $k$ of characteristic $p\geq 0$ and let $H$ be a connected subgroup of $G$. If $H$ is $G$-cr then $H$ is reductive. Conversely, if $H$ is reductive and either $p=0$ or $p$ is greater than the ranks of all simple factors of $G$, then $H$ is $G$-cr.
\end{theorem}

The implication `$G$-cr $\Rightarrow$ reductive' follows from the Borel--Tits theorem: one can construct a parabolic subgroup $P$ containing $H$ such that $\RR_u(H)\subseteq\RR_u(P)$. Thus if $\RR_u(H)$ is non-trivial then $H$ is in no Levi subgroup of $P$.

The `reductive $\Rightarrow$ $G$-cr' direction can be deduced from Jantzen's or McNinch's result if $G$ is simple and classical. The exceptional case, tackled in \cite{MR1329942}, relies on a careful study of the $1$-cohomology arising from the conjugation action of subgroups of parabolics on unipotent radicals. We discuss this technique in more detail in \S\ref{s:nonGcr}, where we explain how one can find non-$G$-cr subgroups, when they exist.

Given the result for $G$ simple, the general case follows in short order, since parabolic subgroups and Levi subgroups in $G$ are commuting products of the central torus $Z(G)^{\circ}$ with parabolic and Levi subgroups of the simple factors of $G$.

One of Serre's initial motivations for studying $G$-complete reducibility was in studying complete reducibility of representations. Specifically, the starting point for Serre's lectures \cite{Ser3} is the observation of Chevalley \cite{MR0051242} that in characteristic $0$, tensor products of completely reducible modules for an \emph{arbitrary} group are again completely reducible. This fails in positive characteristic, however Serre observed \cite{MR1253203} that it does hold when the characteristic is large relative to the dimension of the modules in question, and he then began asking questions of the form: If a tensor product (or symmetric power, or alternating power) of modules is completely reducible, must the initial module(s) also be completely reducible? Or conversely? The answer \cite{MR1467165} depends naturally on certain congruence conditions on the characteristic. See \S\ref{s:hereditary} for more on this. 

In a related vein, in \cite[\S 5.2]{MR2167207}, for a reductive group $G$ with a finite-dimensional $G$-module $V$, Serre defines an invariant $n(V)$ in terms of the weights of $G$ on $V$; he then proves that if the characteristic is larger than $n(V)$ and the identity component of the kernel of $G \to \GL(V)$ is a torus, then $H \subseteq G$ is $G$-cr if and only if $V$ is a completely reducible $H$-module \cite[Theorem~5.4]{MR2167207}. Thus in sufficiently large characteristic, $G$-complete reducibility can indeed be detected on the level of $G$-modules. If $V = \Lie(G)$ is the adjoint module then $n(V) = 2h_G - 2$, where $h_G$ is the Coxeter number of $G$. Note that this bound is typically much larger than that given in Theorem~\ref{thm:gcr-red}.

\subsection{Equivalence with strong reductivity} \label{sec:BMRresults}

A major result in complete reducibility generalises the idea that a completely reducible module is the direct sum of a \emph{unique} list of simple ones. In place of the simple summands of a module, the focus is on the Levi subgroup stabilising the decomposition.
\begin{theorem}[{\cite[Corollary~3.5]{MR2178661}}] \label{thm:gcr-strong-red}
  For a subgroup $H$ of a connected reductive algebraic group $G$, the following are equivalent.
  \begin{enumerate}[label=\normalfont(\roman*)]
    \item $H$ is $G$-cr; \label{bmr-i}
    \item $H$ is $C_G(S)$-irr for some maximal torus $S$ of $C_G(H)$; \label{bmr-ii}
    \item for every parabolic subgroup $P$ of $G$ which is minimal with respect to containing $H$,
  the subgroup $H$ is $L$-irr for some Levi subgroup $L$ of $P$;
    \item there exists a parabolic subgroup $P$ of $G$ which is minimal with respect to containing
  $H$, such that $H$ is $L$-irr for some Levi subgroup $L$ of $P$.
  \end{enumerate}
\end{theorem}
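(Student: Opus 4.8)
The natural framework is the cocharacter description of parabolic and Levi subgroups. For a cocharacter $\lambda\colon\Gm\to G$ put $P_\lambda=\{g\in G:\lim_{t\to0}\lambda(t)g\lambda(t)^{-1}\text{ exists in }G\}$, let $L_\lambda=C_G(\lambda(\Gm))$, and let $c_\lambda\colon P_\lambda\to L_\lambda$, $g\mapsto\lim_{t\to0}\lambda(t)g\lambda(t)^{-1}$, be the associated retraction homomorphism. Then $P_\lambda$ is a parabolic subgroup with Levi factor $L_\lambda$ and $\RR_u(P_\lambda)=\ker c_\lambda$; every parabolic of $G$ is some $P_\lambda$, every Levi factor of $P_\lambda$ has the form $uL_\lambda u^{-1}$ with $u\in\RR_u(P_\lambda)$, and Levi subgroups of $G$ are exactly the centralizers $C_G(T')$ of subtori $T'\le G$. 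I would record at the outset the ``single parabolic'' lemma: if $H\le P_\lambda$ and $gHg^{-1}\le L_\lambda$ for some $g\in G$, then already $uHu^{-1}\le L_\lambda$ for some $u\in\RR_u(P_\lambda)$; this drops out of applying $c_\lambda$ together with the Bruhat decomposition, and lets one test $G$-complete reducibility inside a single parabolic.

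With this in hand the implications $(i)\Rightarrow(iii)\Rightarrow(iv)$ are formal. For $(i)\Rightarrow(iii)$: if $H$ is $G$-cr and $P$ is minimal among parabolics containing $H$, pick a Levi $L$ of $P$ with $H\le L$; were $H$ contained in a proper parabolic $Q$ of $L$, then $Q\cdot\RR_u(P)$ would be a parabolic of $G$ strictly between $H$ and $P$ (it contains a Borel of $G$ and is proper in $P$), contradicting minimality, so $H$ is $L$-irr. And $(iii)\Rightarrow(iv)$ needs only that a parabolic of least dimension containing $H$ exists. Thus the real work is to close the circuit; I would do this by proving $(i)\Leftrightarrow(ii)$ and $(iv)\Rightarrow(ii)$.

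The equivalence $(i)\Leftrightarrow(ii)$ is the geometric invariant theory underlying the whole subject. Fix a tuple $\mathbf h=(h_1,\dots,h_n)\in H^n$ that topologically generates $H$ and let $G$ act on $G^n$ by simultaneous conjugation, with orbit $G\cdot\mathbf h$. Two facts are needed: $(a)$ $H$ is $G$-cr if and only if $G\cdot\mathbf h$ is closed; and $(b)$, due to Richardson, $G\cdot\mathbf h$ is closed if and only if $H$ lies in no proper parabolic subgroup of $C_G(S)$ for $S$ a maximal torus of $C_G(H)$, which is exactly $(ii)$. For $(a)$, if $G\cdot\mathbf h$ is not closed then the Hilbert--Mumford criterion produces $\lambda$ with $\mathbf h':=\lim_{t\to0}\lambda(t)\cdot\mathbf h$ lying in $\overline{G\cdot\mathbf h}\setminus G\cdot\mathbf h$; then $H\le P_\lambda$ and $\mathbf h'=c_\lambda(\mathbf h)$, and if $H$ were $G$-cr the single-parabolic lemma would give $u\in\RR_u(P_\lambda)$ with $u^{-1}\cdot\mathbf h\in L_\lambda^n$, whence $\mathbf h'=c_\lambda(\mathbf h)=c_\lambda(u^{-1}\cdot\mathbf h)=u^{-1}\cdot\mathbf h\in G\cdot\mathbf h$, a contradiction, so $H$ is not $G$-cr. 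Conversely, if $H$ is not $G$-cr, choose a parabolic $P_\lambda$ minimal among those containing $H$ but admitting no Levi factor containing $H$; then $c_\lambda(\mathbf h)=\lim_{t\to0}\lambda(t)\cdot\mathbf h\in\overline{G\cdot\mathbf h}$, and if it lay in $G\cdot\mathbf h$ then $H$ would be $G$-conjugate into $L_\lambda$, hence by the single-parabolic lemma $\RR_u(P_\lambda)$-conjugate into $L_\lambda$, contradicting the choice of $P_\lambda$; so $G\cdot\mathbf h$ is not closed. Granting $(a)$ and $(b)$ gives $(i)\Leftrightarrow(ii)$. Finally, for $(iv)\Rightarrow(ii)$: with $P$ minimal over $H$ and $H$ being $L$-irr for a Levi $L$ of $P$, a maximal torus $S$ of $C_L(H)$ satisfies $C_L(S)=L$ (it is a Levi of $L$ containing the $L$-irr subgroup $H$), so $S\le Z(L)^\circ\le C_G(H)$; one then checks that $S$ is a maximal torus of $C_G(H)$ with $C_G(S)=L$ (using $C_G(Z(L)^\circ)=L$ and that Levi subgroups of $G$ contained in $L$ are Levi subgroups of $L$), and $L$-irreducibility of $H$ is precisely $(ii)$. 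This closes the loop $(i)\Rightarrow(iii)\Rightarrow(iv)\Rightarrow(ii)\Rightarrow(i)$.

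The main obstacle is fact $(a)$: the equivalence of $G$-complete reducibility of $H$ with closedness of the conjugation orbit of a topologically generating tuple. This rests on the Kempf--Rousseau--Hesselink theory of optimal destabilising cocharacters (with, on the Richardson side, the input that closedness of the orbit is detected by the tori in $C_G(H)$, and that a suitable generating tuple exists), and the delicate point is relating the canonical destabilising parabolic to a parabolic minimal over $H$. Everything else in the chain is bookkeeping with the retractions $c_\lambda$ and the minimality condition.
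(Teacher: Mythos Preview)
Your proposal is correct and follows precisely the route the paper indicates. The paper itself does not give a detailed proof of this theorem; it cites \cite[Cor.~3.6]{MR2178661}, identifies condition~(ii) as Richardson's notion of \emph{strong reductivity}, states Richardson's theorem that strong reductivity is equivalent to closedness of the simultaneous-conjugation orbit $G\cdot\mathbf h$ in $G^n$, and remarks that the equivalence with $G$-complete reducibility ``relies on some geometry and geometric invariant theory, which we omit''. Your argument is exactly a fleshing-out of that omitted GIT: the Hilbert--Mumford/Kempf analysis via the retractions $c_\lambda$ to establish (i) $\Leftrightarrow$ closed orbit, combined with Richardson's (ii) $\Leftrightarrow$ closed orbit, plus the elementary bookkeeping for (iii) and (iv). One minor remark: in the direction ``$H$ not $G$-cr $\Rightarrow$ orbit not closed'' you do not actually need $P_\lambda$ to be minimal---any parabolic containing $H$ with no Levi factor containing $H$ suffices for that step---but this is harmless.
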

In the prototype setting $G = \GL(V)$, if $V = V_1 \oplus \cdots \oplus V_r$ is a decomposition of $V$ into irreducible $H$-modules then the torus $S$ in \ref{bmr-ii} consists of all elements inducing scalars on each summands $V_i$, and $C_G(S)$ is the direct product of the subgroups $\GL(V_i)$.

\begin{remark} \label{rem:minLevis}
Levi subgroups of $G$ are precisely the centralisers of sub-tori of $G$, and moreover all maximal tori in any linear algebraic group are conjugates of one another. In particular all maximal tori of $C_{G}(H)$ are $C_{G}(H)$-conjugate, and it follows that their centralisers, which are those Levi subgroups of $G$ that are minimal subject to containing $H$, are also conjugate to one another by elements of $C_G(H)$, and the ranks of their centres equal the rank of $C_G(H)$. For the same reason, if $H$ is $C_G(S)$-irr for some maximal torus $S$ of $C_G(H)$, it is in fact $C_G(S)$-irr for \emph{all} such $S$.
\end{remark}

Subgroups satisfying \ref{bmr-ii} in Theorem~\ref{thm:gcr-strong-red} were termed \emph{strongly reductive} by Richardson \cite[Def.~16.1]{MR952224}. Richardson studied strongly reductive subgroups from a geometric viewpoint, centred around the following result.
\begin{theorem}[{\cite[\S 16]{MR952224}}]
Suppose that $H$ is the Zariski closure of a finitely-generated subgroup $\left<h_1,\ldots,h_n\right>$ of $G$. Then $H$ is $G$-cr (resp.~$G$-irr) if and only if the orbit $G \cdot (h_1,\ldots,h_n)$ is Zariski closed in $G^n$ (resp.~a \emph{stable point} of $G^n$).
\end{theorem}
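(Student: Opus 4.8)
The plan is to recast both sides of the equivalence in the language of cocharacters and then invoke the Hilbert--Mumford--Kempf theory of closed orbits. First I would set up the standard dictionary: a cocharacter $\lambda\colon\Gm\to G$ determines the parabolic $P_\lambda=\{g\in G:\lim_{t\to0}\lambda(t)g\lambda(t)^{-1}\ \text{exists}\}$, with unipotent radical $\RR_u(P_\lambda)=\{g:\lim_{t\to0}\lambda(t)g\lambda(t)^{-1}=1\}$ and Levi factor $L_\lambda=C_G(\lambda(\Gm))$; every parabolic of $G$ arises this way, and every Levi factor of $P_\lambda$ is $\RR_u(P_\lambda)$-conjugate to $L_\lambda$. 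Letting $G$ act on $G^n$ by simultaneous conjugation and writing $v=(h_1,\dots,h_n)$, so that $\Stab_G(v)=C_G(H)$, the limit $v':=\lim_{t\to0}\lambda(t)\cdot v$ exists exactly when $H\subseteq P_\lambda$, and then $v'$ is the image of $v$ under the componentwise projection $P_\lambda^n\to L_\lambda^n$; in particular the components of $v'$ topologically generate the image of $H$ in $L_\lambda$. This converts ``$H$ lies in a proper parabolic'' and ``$H$ lies in a Levi'' into statements about limits of one-parameter subgroups.

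For the direction ``$G$-cr $\Rightarrow$ closed'' I would argue by contradiction: if $G\cdot v$ is not closed then, $G$ being reductive, the Hilbert--Mumford--Kempf criterion supplies a cocharacter $\lambda$ with $v'$ existing but $v'\notin G\cdot v$. Existence of $v'$ forces $H\subseteq P_\lambda$, so $G$-complete reducibility gives a Levi factor of $P_\lambda$ containing $H$, i.e.~$uHu^{-1}\subseteq L_\lambda$ for some $u\in\RR_u(P_\lambda)$. Then $u\cdot v\in L_\lambda^n$ is fixed by $\lambda(\Gm)$, while $\lambda(t)\cdot(u\cdot v)=(\lambda(t)u\lambda(t)^{-1})\cdot(\lambda(t)\cdot v)$ and $\lambda(t)u\lambda(t)^{-1}\to1$; passing to the limit gives $v'=u\cdot v\in G\cdot v$, a contradiction, so $G\cdot v$ is closed.

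For the converse, ``closed $\Rightarrow$ $G$-cr'', suppose $G\cdot v$ is closed and $H\subseteq P=P_\lambda$. Then $v'$ exists and, by closedness, lies in $G\cdot v$. The hard part --- and the step I expect to be the main obstacle --- is the refinement, due to Richardson, that $v'$ is not merely $G$-conjugate but $\RR_u(P_\lambda)$-conjugate to $v$; this is precisely where closedness of the whole $G$-orbit (rather than just the existence of $v'$ in its closure) is used in an essential way. I would prove it by showing $v'$ lies in the closure of the $\RR_u(P_\lambda)$-orbit of $v$ and then applying the Kostant--Rosenlicht theorem that orbits of unipotent groups on affine varieties are closed, whence $v'\in\RR_u(P_\lambda)\cdot v$; see \cite[\S16]{MR952224} and the GIT treatment in \cite{MR2178661}. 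Granting this, $uHu^{-1}\subseteq L_\lambda$ for some $u\in\RR_u(P_\lambda)$, so $H$ lies in the Levi factor $u^{-1}L_\lambda u$ of $P$, as required.

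Finally, the $G$-irr statement follows once we recall that a point of $G^n$ is \emph{stable} for conjugation exactly when its orbit is closed and its stabiliser is finite modulo the kernel of the action, namely $Z(G)$. By the above it remains to check, for $G$-cr $H$, that $H$ is $G$-irr if and only if $C_G(H)^\circ\subseteq Z(G)^\circ$. If $H$ is $G$-irr, then a non-central torus in $C_G(H)$ would give a proper Levi $C_G(S)\supseteq H$ and hence a proper parabolic, while a nontrivial unipotent radical of $C_G(H)^\circ$, which $H$ normalises, would by the Borel--Tits theorem (Theorem~\ref{thm:bt}) also place $H$ in a proper parabolic; so $C_G(H)^\circ$ is reductive with all its tori central, hence a central torus, giving $C_G(H)^\circ\subseteq Z(G)^\circ$ and showing $v$ is stable. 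Conversely, if $H$ is $G$-cr but not $G$-irr, then by Theorem~\ref{thm:gcr-strong-red} $H$ is $C_G(S)$-irr with $C_G(S)$ a proper Levi and $S$ a maximal --- hence non-central --- torus of $C_G(H)$, whence $C_G(H)^\circ\not\subseteq Z(G)^\circ$ and $v$ is not stable.
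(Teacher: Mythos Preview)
The paper does not supply its own proof here; it simply refers the reader to Richardson and remarks that the required geometry and GIT are omitted. The intended route, combining that citation with the paper's preceding Theorem~\ref{thm:gcr-strong-red}, is: Richardson shows \emph{strong reductivity} is equivalent to closedness of $G\cdot v$, and \cite{MR2178661} then identifies strong reductivity with $G$-complete reducibility.

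Your cocharacter dictionary, your argument for ``$G$-cr $\Rightarrow$ closed'', and your treatment of the $G$-irr\,/\,stable equivalence are all correct and in line with the standard approach. The genuine gap is in ``closed $\Rightarrow$ $G$-cr''. You rightly isolate the crux --- upgrading $v'\in G\cdot v$ to $v'\in\RR_u(P_\lambda)\cdot v$ --- but the mechanism you propose collapses on inspection: by Kostant--Rosenlicht the orbit $\RR_u(P_\lambda)\cdot v$ is already closed, so ``$v'$ lies in its closure'' is \emph{equivalent} to ``$v'$ lies in the orbit'', and you give no independent argument for either. In particular the obvious curve $t\mapsto\lambda(t)\cdot v$ does not lie in $\RR_u(P_\lambda)\cdot v$: for an $n$-tuple with $n\ge 2$ there is in general no single $u\in\RR_u(P_\lambda)$ with $u\cdot v=\lambda(t)\cdot v$ in every coordinate, so one cannot just take the limit along this curve. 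Richardson's actual argument for this direction is substantively different and does not pass through $\RR_u(P_\lambda)$-conjugacy at all: from closedness of $G\cdot v$ he deduces, via Matsushima's criterion, that $C_G(H)$ is reductive; taking a maximal torus $S\subseteq C_G(H)$ and setting $M=C_G(S)$, he shows $H$ is $M$-irreducible (any destabilising cocharacter of $M$ would land in $C_M(v)^\circ=S\subseteq Z(M)$), i.e.\ he proves strong reductivity, with the passage to $G$-cr being the separate content of Theorem~\ref{thm:gcr-strong-red}.
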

\begin{remarks} \leavevmode
\begin{enumerate}
  \item This theorem is a lynchpin of the results in \cite{MR2178661} and subsequent work. It relies on some geometry and geometric invariant theory which we omit; we direct the reader to \cite{MR952224} and \cite{MR2178661}.
  \item In geometric invariant theory, a \emph{stable point} of a $G$-variety is a point whose $G$-orbit is closed and whose stabiliser is a finite extension of the kernel of the action. Thus when $G$ acts on $G^n$ by simultaneous conjugation, a stable point is one whose orbit is closed and whose centraliser is a finite extension of the centre $Z(G)$.
  \item One can drop the hypothesis that $H$ is the closure of a finitely generated subgroup---one need only pick a sufficiently large $n$-tuple so that $H$ and the elements of the $n$-tuple generate the same associative subalgebra of $\End(V)$ for some faithful representation $G \to \GL(V)$; this is always possible for dimension reasons. Such an $n$-tuple is called a \emph{generic tuple} \cite[Def.~2.5]{MR2860266}.
\end{enumerate}\end{remarks}

A highlight of what can be proved through this approach is the following theorem.

\begin{theorem}[{\cite{MR2178661}}] \label{thm:BMRClifford}
  Let $G$ be a connected reductive algebraic group and $H$ be a $G$-cr subgroup of $G$. 
  \begin{enumerate}[label=\normalfont(\roman*)]
    \item If $N \triangleleft H$ then $N$ is $G$-cr. \label{gcr-props-i}
    \item The subgroups $N_G(H)$ and $C_G(H)$ are $G$-cr. More generally, if $K$ is a subgroup of $G$ with 
    \[ H C_G(H)^{\circ} \subseteq K \subseteq N_G(H), \]
    then $K$ is $G$-cr. \label{gcr-props-ii}
  \end{enumerate}

In particular, a subgroup $X \subseteq G$ is $G$-cr if and only if $N_G(X)$ is $G$-cr.
\end{theorem}

Part \ref{gcr-props-i} generalises Clifford's theorem in representation theory: a completely reducible module for a group remains completely reducible upon restriction to a normal subgroup. Part \ref{gcr-props-ii} gives a converse to this, and inspires the following question: \begin{quote}\emph{If $H$ is a commuting product $H = AB$, where $A$ and $B$ are $G$-cr, must $H$ also be $G$-cr?}\end{quote}By \ref{gcr-props-ii} the answer is yes if $A = C_G(B)$ and $B = C_G(A)$. The answer is also positive when the characteristic is large enough:
\begin{theorem}[{\cite[Theorem~1.3]{MR2431255}}] \label{thm:commuting}
Let $G$ be a connected reductive algebraic group in characteristic $p \ge 0$ and let $A$ and $B$ be commuting $G$-cr subgroups of $G$ such that either $p = 0$; or $p > 3$; or $p = 3$ and $G$ has no simple factors of exceptional type. Then $AB$ is also $G$-cr.
\end{theorem}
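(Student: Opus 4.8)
The plan is to verify the defining condition of $G$-complete reducibility directly, reducing it to a statement about the reductive group $C_G(A)$. (One could equally run the argument through the geometric characterisation of $G$-complete reducibility via closed orbits \cite{MR952224} together with the Hilbert--Mumford criterion, destabilising the simultaneous-conjugation orbit of a generic tuple for $AB$; the two routes funnel to the same crux.) So let $Q$ be an arbitrary parabolic subgroup of $G$ with $AB\subseteq Q$; I must produce a Levi factor of $Q$ containing $AB$. Write $Q=P_\mu$ using the cocharacter description of parabolic subgroups. Since $A$ is $G$-cr and $A\subseteq Q$, some Levi factor of $Q$ contains $A$, and as the Levi factors of $Q$ form a single $\RR_u(Q)$-orbit I may conjugate by an element of $\RR_u(Q)$---which alters neither the hypotheses nor the conclusion---to arrange $A\subseteq L_\mu$. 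Then $\mu(\Gm)$ centralises $A$, so $\mu$ is a cocharacter of $M:=C_G(A)$, which is reductive by Theorems~\ref{thm:gcr-red} and~\ref{thm:BMRClifford} (possibly disconnected, so one works with R-parabolic subgroups in the sense of \cite{MR2178661}). The standard compatibility of these subgroups with reductive subgroups gives that $M\cap P_\mu$ is an R-parabolic subgroup of $M$, with unipotent radical $M\cap\RR_u(P_\mu)$ and R-Levi subgroup $M\cap L_\mu$, and it contains $B$ since $B\subseteq C_G(A)=M$.

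The heart of the matter is then the claim that $B$ is $M$-completely reducible. Granting it, $B\subseteq M\cap P_\mu$ yields $u\in M\cap\RR_u(P_\mu)$ with $uBu^{-1}\subseteq M\cap L_\mu$; since $u$ centralises $A$ we also have $uAu^{-1}=A\subseteq L_\mu$, hence $u(AB)u^{-1}\subseteq L_\mu$, so $AB$ lies in the Levi factor $u^{-1}L_\mu u$ of $Q$. As $Q$ was arbitrary, $AB$ is $G$-cr. Thus everything reduces to a transfer principle: a $G$-cr subgroup of $C_G(A)$, where $A$ is a $G$-cr subgroup, is automatically $C_G(A)$-cr.

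This transfer is the step I expect to be the main obstacle, and it is precisely here that the characteristic hypothesis enters. Taking an R-parabolic $P_\nu(M)$ of $M$ containing $B$ and invoking $G$-complete reducibility of $B\subseteq P_\nu$ produces $v\in\RR_u(P_\nu)$ with $vBv^{-1}\subseteq L_\nu$, but there is no reason for $v$ to lie in $M$; one needs a conjugating element inside $M\cap\RR_u(P_\nu)=\RR_u(P_\nu(M))$ instead. Producing one rests on separability properties of the centraliser $C_G(A)$ in $G$---in effect, that the relevant $M$-orbit maps on the unipotent radicals $\RR_u(P_\nu)$ are separable, so that the conjugating element may be chosen in $M$. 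Such separability holds when $p=0$, when $p>3$, and when $p=3$ provided $G$ has no exceptional simple factor, and can fail for small $p$, consistent with the known counterexamples to the statement in characteristics $2$ and $3$ for exceptional $G$. I would isolate (or quote from the surrounding literature) this separability/transfer result, reduce to $G$ simple using that parabolic subgroups, Levi factors, and hence $G$-complete reducibility all decompose over the simple factors and central torus of $G$ (as observed after Theorem~\ref{thm:gcr-red}), and then handle the classical and exceptional simple types separately according to the stated characteristic bound.
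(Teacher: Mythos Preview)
Your structural reduction is correct and matches the approach of \cite{MR2431255}: place $A$ in a Levi factor $L_\mu$, observe that $\mu$ then lands in $M=C_G(A)$, and reduce everything to the transfer claim that the $G$-cr subgroup $B\subseteq M$ is in fact $M$-cr. The paper itself does not prove the theorem but describes the argument in \cite{MR2431255} as involving case-by-case analysis depending on the Lie type of $G$, and also points to a uniform argument \cite[Prop.~40]{MR2309195} valid under a weaker characteristic bound; your outline is consistent with both.

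The gap is in the transfer step. You attribute it to ``separability properties of the centraliser $C_G(A)$'' and assert that this separability holds precisely under the stated hypotheses on $p$. But the natural route---Theorem~\ref{thm:sep-red}, requiring $(G,M)$ to be a reductive pair and $B$ to be separable in $G$---does not deliver these bounds. Separability of \emph{all} subgroups is equivalent to $p$ being pretty good for $G$ \cite{MR3042602}, and $p=3$ is not pretty good for $A_{n}$ when $3\mid n+1$; yet the theorem covers $p=3$ for all classical $G$. The reductive-pair condition on $(G,C_G(A))$ is likewise not automatic for arbitrary $G$-cr $A$. So your separability claim, as stated, is not a known result one can simply quote; it is precisely the content that \cite{MR2431255} establishes by type-by-type work. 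Your final paragraph half-acknowledges this, but the phrasing suggests the case analysis is a formality once separability is invoked, whereas in fact the case analysis \emph{is} the proof of the transfer. The uniform separability-style argument is what gives the weaker bound mentioned in the paper's remark after the theorem.
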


The proof in \cite{MR2431255} involves some case-by-case arguments depending on the Lie type of $G$, although a uniform argument is also possible if one is willing to relax the bound on the characteristic \cite[Proposition.~40]{MR2309195}.

\begin{remark}
To date, the authors are not aware of a reductive group $G$ and a pair of commuting $G$-cr subgroups $A$ and $B$ (connected or otherwise) whose product $AB$ is non-$G$-cr when $p = 3$. It is therefore plausible that the above theorem holds whenever $p \neq 2$. Such examples do however exist when $p = 2$, cf.~\cite[Ex.~5.3]{MR2431255}.
\end{remark}

One powerful feature of $G$-complete reducibility is that one can allow the group $G$ to change under various constructions. For instance, in \cite{MR2178661} it is shown that if $G = G_1 \times G_2$ is a direct product of reductive groups then $H \subseteq G$ is $G$-cr or $G$-irr if and only if both images under projection to a factor $G_i$ are $G_i$-cr (resp.~$G_i$-irr). Similarly, taking a quotient by a normal subgroup $N$ sends $G$-cr subgroups to $(G/N)$-cr subgroups, and the converse also holds if $N^{\circ}$ is a torus.

The following result is useful in what follows. Recall that a linear algebraic group $S$ is called \emph{linearly reductive} if all its rational representations are completely reducible. In characteristic $0$ this is equivalent to $S$ being reductive, whereas in positive characteristic this means that $S^\circ$ is a torus and the order of the finite group $|S/S^\circ|$ is coprime to $p$.
\begin{theorem}[{\cite[Lemma~2.6, Corollary~3.21]{MR2178661}}] \label{thm:BMRcentraliser}
Let $S$ be a linearly reductive subgroup of a connected reductive algebraic group $G$. Then $S$ is $G$-cr, and if $H = C_G(S)^\circ$ then a subgroup of $H$ is $H$-cr if and only if it is $G$-cr.
\end{theorem}
In the particular case that $S$ is a torus, $C_G(S)$ is a Levi subgroup of $G$ and the result in this case was first proved in \cite[Proposition~3.2]{MR2167207}; this forms part of the proof of Theorem~\ref{thm:gcr-strong-red}. The following corollary justifies our focus on semisimple subgroups:
\begin{cor} \label{cor:linRedCent}
Let $G$ be connected reductive, $H$ a subgroup of $G$ and $S$ a subgroup of $C_G(H)$. Then $H$ is $G$-cr if and only if $HS$ is $G$-cr. In particular, a connected reductive subgroup $H$ of $G$ is $G$-cr if and only if its (semisimple) derived subgroup $\D(H)$ is $G$-cr.
\end{cor}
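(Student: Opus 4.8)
The plan is to deduce Corollary~\ref{cor:linRedCent} from Theorem~\ref{thm:BMRcentraliser} (applied in the direction where $S$ is a linearly reductive \emph{subgroup} contained in the centraliser), together with Theorem~\ref{thm:BMRClifford}\ref{gcr-props-ii}. First I would prove the forward implication: suppose $H$ is $G$-cr and $S \subseteq C_G(H)$ is linearly reductive. The subgroup $HS$ lies between $H C_G(H)^{\circ}$ and $N_G(H)$---indeed $S$ normalises $H$ since $S$ centralises it, so $HS \subseteq N_G(H)$; and one needs $HS \supseteq H C_G(H)^{\circ}$, which is \emph{not} automatic (e.g.\ $S$ finite), so instead of quoting \ref{gcr-props-ii} verbatim I would argue directly. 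By Theorem~\ref{thm:BMRcentraliser}, $S$ itself is $G$-cr; set $K = C_G(S)^{\circ}$, a subgroup to which Theorem~\ref{thm:BMRcentraliser} says $G$-cr and $K$-cr agree. Since $S$ centralises $H$, we have $H \subseteq C_G(S)$, and as $H$ is connected, $H \subseteq K$. Because $H$ is $G$-cr it is therefore $K$-cr. Now $S \subseteq Z(K)$ (it centralises $C_G(S)$ hence its identity component $K$), so $HS = H \cdot S$ with $S$ central in $K$ and linearly reductive; I claim $HS$ is $K$-cr. This is exactly Corollary~\ref{cor:linRedCent} \emph{inside $K$}, with $S$ now central---so to avoid circularity I would instead observe that a central linearly reductive subgroup is contained in every Levi of every parabolic of $K$ (parabolics and Levis of $K$ all contain $Z(K)^{\circ}$, and the finite part of $S$ of order coprime to $p$ lies in the maximal torus inside any Levi after conjugating, using that $HS$ normalises $H$ and minimal parabolics over $HS$ equal minimal parabolics over $H$). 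Concretely: if $Q$ is a parabolic of $K$ minimal over $HS$, then $Q$ is minimal over $H$ too (as $S$ is contained in no proper parabolic, being linearly reductive and central), and $H$ being $K$-cr lies in a Levi $M$ of $Q$; since $S$ is central in $K$ it lies in $M$ as well, so $HS \subseteq M$. Hence $HS$ is $K$-cr, thus $G$-cr by Theorem~\ref{thm:BMRcentraliser}.

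For the reverse implication, suppose $HS$ is $G$-cr. Then $H$ is a normal subgroup of $HS$ (again because $S$ centralises $H$), so $H$ is $G$-cr by Theorem~\ref{thm:BMRClifford}\ref{gcr-props-i}. This direction is essentially immediate.

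Finally, the last sentence: given a connected reductive $H$, write $H = \D(H) \cdot \RR(H)$ with $\RR(H)$ a central torus; a torus is linearly reductive, and $\RR(H) \subseteq C_G(\D(H))$ since it is central in $H$. Applying the equivalence just proved with the roles of $H$ and $S$ played by $\D(H)$ and $\RR(H)$ gives that $\D(H)$ is $G$-cr iff $\D(H)\RR(H) = H$ is $G$-cr.

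The main obstacle is the forward direction, specifically handling a possibly \emph{disconnected} linearly reductive $S$: one cannot simply invoke Theorem~\ref{thm:BMRClifford}\ref{gcr-props-ii} because $HS$ need not contain $C_G(H)^{\circ}$, and one must check that the finite part of $S$ (order coprime to $p$) genuinely lands in a common Levi with $H$. The cleanest route is the reduction to $K = C_G(S)^{\circ}$ via Theorem~\ref{thm:BMRcentraliser}, after which $S$ becomes central and the ``contained in every minimal parabolic's Levi'' argument closes the gap; I would be careful to note that minimal parabolics of $K$ over $HS$ coincide with those over $H$ precisely because the linearly reductive $S$ lies in no proper parabolic of $K$.
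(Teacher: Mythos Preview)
Your reverse implication and the ``in particular'' clause are correct; invoking Theorem~\ref{thm:BMRClifford}\ref{gcr-props-i} for $H \triangleleft HS$ is the cleanest route there.

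The forward direction has the right opening---reduce to $K = C_G(S)^\circ$ via Theorem~\ref{thm:BMRcentraliser}---but the details do not close. You assume $H$ is connected to place $H$ inside $K$, which the first sentence of the statement does not grant. More seriously, you assert $S \subseteq Z(K)$; but $S \subseteq C_G(S)$ already requires $S$ to be abelian, and a linearly reductive group need not be (take a non-abelian finite group of order prime to $p$), so in general $HS \not\subseteq K$ and speaking of $HS$ being $K$-cr is meaningless. Finally, the phrase ``$S$ is contained in no proper parabolic, being linearly reductive and central'' is backwards: a subgroup of $Z(K)$ lies in \emph{every} parabolic and \emph{every} Levi of the connected reductive group $K$ (it sits inside every maximal torus). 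That is precisely why minimal parabolics over $H$ and over $HS$ coincide and why $S$ lands in any Levi containing $H$---so with the slip corrected your parabolic argument does go through, but only under the unjustified hypothesis $S \subseteq Z(K)$.

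The paper proceeds differently and more symmetrically. It first disposes of characteristic $0$, where $G$-cr is equivalent to reductive and the equivalence is immediate. In positive characteristic it makes the same reduction to render $S$ central, but then, rather than arguing inside parabolics, it passes to the quotient $G/S$: since $S$ is now normal with $S^\circ$ a torus, a subgroup of $G$ is $G$-cr iff its image is $(G/S)$-cr (this is the fact recorded just before Theorem~\ref{thm:BMRcentraliser}), and $H$ and $HS$ both map to $HS/S$. This gives both implications at once and sidesteps any delicate placement of $S$ inside specific Levis.
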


\begin{proof}
In characteristic $0$, the subgroup $H$ is $G$-cr if and only if it is reductive (Theorem \ref{thm:gcr-red}), which is the case if and only if $HS$ is reductive.

In positive characteristic, using Theorem~\ref{thm:BMRcentraliser} we can replace $G$ with $C_G(S)^\circ$ so that $S$ is central in $G$. Since $S^\circ$ is a torus, by the above discussion the subgroups $H$ and $HS$ are each $G$-cr if and only if $HS/S$ is $(G/S)$-cr.
\end{proof}

\subsection{Separability, reductive pairs}
A subgroup $H$ of $G$ is called \emph{separable} if the Lie algebra $\Lie(C_{G}(H))$ coincides with the fixed-point space $C_{\Lie(G)}(H)$; the latter always contains the former but can in general be larger.\footnote{In scheme-theoretic language, this is equivalent to the centraliser $C_G(H)$ being a smooth subgroup scheme of $G$.} If $\mathbf{h} \in G^n$ is a topological generating tuple (or generic tuple) for $H$, and $G$ acts on $G^n$ by simultaneous conjugation, the statement is also equivalent to saying the orbit map $G \to G \cdot \mathbf{h}$ is a separable morphism \cite[Proposition~6.7]{MR1102012}, whence the terminology. In $\GL(V)$, all closed subgroups are separable, cf.\ \cite[Lemma 3.5]{MR3042602}. For a general reductive group $G$, non-separable subgroups are always a low-characteristic phenomenon. Indeed, the main result of \cite{MR3042602} shows that the statement ``all closed subgroups are separable'' holds if and only if the characteristic is $0$ or \emph{pretty good} for $G$. The latter is a very mild condition - see \cite[Definition~2.11]{MR3042602} for the precise definition. For instance if $G$ is simple of exceptional type then a prime $p$ is pretty good for $G$ unless $p = 2$ or $3$, or $G = E_8$ and $p = 5$.

\label{par:reductive-pair}Next, a pair $(G,H)$ of reductive groups with $H \subseteq G$ is called a \emph{reductive pair} \cite{MR217079} if $\Lie(H)$ is an $H$-module direct summand of $\Lie(G)$.\footnote{While \cite{MR217079} introduces this only for connected groups, the concept and results apply equally well for disconnected subgroups.} Again, $(G,H)$ is always a reductive pair if the underlying characteristic is large relative to $G$ (for instance $\Lie(G)$ is a completely reducible module for all reductive subgroups in sufficiently large characteristic).

The application of these two concepts to complete reducibility is now as follows.
\begin{theorem}[{\cite[Theorem~3.35, Corollary~3.36]{MR2178661}}] \label{thm:sep-red}
Let $(G, M)$ be a reductive pair and let $H$ be a separable subgroup of $G$ which is contained in $M$. If $H$ is $G$-cr then $H$ is $M$-cr.

In particular, if $(\GL(V),M)$ is a reductive pair and $H \subseteq M$ acts completely reducibly on $V$, then $H$ is $M$-cr.
\end{theorem}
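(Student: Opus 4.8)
The plan is to recast both conditions via Richardson's closed-orbit criterion and then use the reductive-pair decomposition of $\Lie(G)$ together with separability to transfer closedness of orbits from $G$ down to $M$. Fix a generic tuple $\mathbf{h} = (h_1,\dots,h_n)\in M^n$ for $H$; since $M\subseteq G$, such a tuple is simultaneously generic for $H$ viewed in $M$ and in $G$, so that $C_G(\mathbf{h}) = C_G(H)$, $C_M(\mathbf{h}) = C_M(H)$, and the infinitesimal stabilisers $\bigcap_j\g^{h_j}$ inside $\g = \Lie(G)$ and inside $\m = \Lie(M)$ are the fixed-point spaces $C_{\g}(H)$ and $C_{\m}(H)$ respectively. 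By Richardson's theorem applied separately to $G$ and to $M$, acting by simultaneous conjugation, it suffices to show: \emph{if $G\cdot\mathbf{h}$ is closed in $G^n$ then $M\cdot\mathbf{h}$ is closed in $M^n$}. Because $H$ is separable in $G$, the orbit map $G\to G\cdot\mathbf{h}$ is a separable morphism; since an orbit is a smooth homogeneous space, this upgrades the inclusion $\g\cdot z\subseteq T_z(G\cdot\mathbf{h})$ to an equality $T_z(G\cdot\mathbf{h}) = \g\cdot z$ at every point $z$ of the orbit, where $\g\cdot z$ denotes the image of the differential $\xi\mapsto(\xi - \Ad(z_1)\xi,\dots,\xi - \Ad(z_n)\xi)$ of that map.

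Now I bring in the reductive pair: write $\g = \m\oplus\c$ as $M$-modules under the adjoint action. If $z\in G\cdot\mathbf{h}\cap M^n$ then each $z_j$ lies in $M$, so $\Ad(z_j)$ preserves both $\m$ and $\c$; hence the differential above sends $\m$ into $\m^n\subseteq T_z(G^n)$ and $\c$ into $\c^n$, whence $\g\cdot z = (\m\cdot z)\oplus(\c\cdot z)$ with $\m\cdot z\subseteq\m^n$ and $\c\cdot z\subseteq\c^n$, and therefore the transversality identity $\g\cdot z\cap\m^n = \m\cdot z$. Combining this with the previous paragraph, the containments $\m\cdot z\subseteq T_z(M\cdot z)\subseteq T_z(G\cdot\mathbf{h}\cap M^n)\subseteq T_z(G\cdot\mathbf{h})\cap T_z(M^n) = \g\cdot z\cap\m^n$ have both ends equal to $\m\cdot z$, so all terms coincide. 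In particular $\dim_z(G\cdot\mathbf{h}\cap M^n) = \dim(M\cdot z) = \dim T_z(G\cdot\mathbf{h}\cap M^n)$, so $G\cdot\mathbf{h}\cap M^n$ is smooth, and each $M$-orbit inside it has, at each of its points, the same tangent space as the ambient variety; being an orbit, such a set is then open and hence also closed in $G\cdot\mathbf{h}\cap M^n$.

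Since $G\cdot\mathbf{h}\cap M^n$ is closed in $M^n$ — it is the intersection of the closed subset $G\cdot\mathbf{h}$ of $G^n$ with the closed subset $M^n$ — the orbit $M\cdot\mathbf{h}$, being open and closed in $G\cdot\mathbf{h}\cap M^n$, is closed in $M^n$; by Richardson's theorem for $M$ this is precisely the assertion that $H$ is $M$-cr. The final ``in particular'' statement is the special case $G = \GL(V)$: every subgroup of $\GL(V)$ is separable, and for $H\subseteq\GL(V)$ the statements ``$V$ is a completely reducible $H$-module'' and ``$H$ is $\GL(V)$-cr'' coincide (\S\ref{sec:gcr}).

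The step I expect to be the main obstacle is the tangent-space bookkeeping: one must check that separability of the orbit map is exactly what licenses $T_z(G\cdot\mathbf{h}) = \g\cdot z$ (only ``$\subseteq$'' holds in general), that the decomposition $\g = \m\oplus\c$ is genuinely available \emph{as $M$-modules} so that it is preserved by the operators $\Ad(z_j)$, and that the closed-orbit criterion is being applied legitimately to both $G$ and $M$ with compatible generic tuples. Once these ingredients are in place the remaining verifications are routine.
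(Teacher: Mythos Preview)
Your argument is correct and follows essentially the same route as the paper (and as the original proof in \cite{MR2178661}, which in turn follows Richardson \cite{MR217079}): translate both hypotheses into Richardson's closed-orbit criterion, then use separability together with the $M$-stable splitting $\g=\m\oplus\c$ to show that each $M$-orbit on $(G\cdot\mathbf{h})\cap M^n$ is open, hence closed, in that intersection. The paper's sketch records exactly this, saying that the separable $G$-orbit intersected with $M^n$ breaks into finitely many Zariski-closed $M$-orbits; you have supplied the tangent-space computation that justifies this.
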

The proof in \emph{op.~cit.}~is geometric, following \cite{MR217079}. If $H$ corresponds to $(h_1,\ldots,h_n) \in G^n$, that is, if this is a generic tuple for $H$ or if $H$ is the closure of $\left<h_1,\ldots,h_n\right>$, then under the given hypotheses it is shown that the $G$-orbit $\mathcal{O} = G \cdot \mathbf{h}$ under simultaneous conjugacy splits into finitely many Zariski-closed $M$-orbits in $\mathcal{O} \cap M^n$. So if $\mathcal{O}$ is closed in $G^n$, then the $M$-orbits on $\mathcal{O} \cap M^n$ are closed in $M^n$, which in turn implies that $H$ is $M$-cr.

\subsection{\texorpdfstring{$G$}{G}-complete reducibility in classical groups}

In this section, we write $G = \Cl(V)$ to mean that $G$ is one of the groups $\SL(V)$, $\Sp(V)$ or $\SO(V)$, where $V$ carries either the zero form or a non-degenerate alternating or quadratic form, respectively. Then parabolic subgroups of $G$ are the stabilisers of flags of subspaces where the relevant form vanishes, i.e.~\emph{totally isotropic} and \emph{totally singular} subspaces (respectively) when the form is non-degenerate, cf.\ \cite[Proposition~12.13]{MR2850737}. When the form is non-degenerate, two parabolic subgroups corresponding to flags $(V_{i})_{i = 1,\ldots,r}$ and $(W_{i})_{i = 1,\ldots,s}$ are opposite if $r = s$ and $V$ is an orthogonal direct sum $V_{i} \perp W_{i}^{\perp}$ for each $i$, where $W_{i}^{\perp}$ is the annihilator of $W_i$ relative to the alternating or quadratic form on $V$. Thus sufficient understanding of the action of $H$ on $V$ tells us whether or not $H$ is $G$-cr.

When $G = \SL(V)$ we have seen that a subgroup of $G$ is $G$-cr if and only if it is completely reducible on $V$. By Theorem~\ref{thm:BMRcentraliser} this also holds for classical groups $G = \Cl(V)$ in characteristic not $2$, since $G$ is then the centraliser in $\SL(V)$ of an involutory outer automorphism.\footnote{As an alternative proof, when $p \neq 2$ one can show that $(\GL(V),G)$ is a reductive pair; and since every subgroup is separable in $\GL(V)$, Theorem~\ref{thm:sep-red} tells us that every $\GL(V)$-cr subgroup of $G$ is $G$-cr.} In this case, classifying all $G$-cr semisimple subgroups of $G = \Cl(V)$ amounts to understanding the dimensions and Frobenius--Schur indicators of all irreducible modules of dimension at most $\dim V$, for all semisimple groups. 

The condition $\Char k = p \neq 2$ is necessary to make these assertions. If $p \neq 2$ then recall that a quadratic form $q$ gives rise to a symmetric bilinear form $B$ on $V$ via
\[B(v,w):=\frac{1}{2}\left(q(v+w)-q(v)-q(w)\right),\]
and $q$ can be recovered from $B$ via $q(x) = B(x,x)$. If $p = 2$ then $B(v,w):=q(v+w)-q(v)-q(w)$ defines a symmetric bilinear form, but $B(x,x) = 0$ so $q$ can no longer be recovered. If $B$ is non-degenerate and $B(v,w)\neq 0$ then $B$ is also non-degenerate on $\langle v,w\rangle^\perp$ and $V$ must have even dimension. Thus the bilinear form on the natural module for $\SO_{2n+1}$ has a $1$-dimensional radical, spanned by a non-singular vector for $q$. For convenience, define the \emph{natural irreducible module} for $\Cl(V)$ to be the largest non-trivial irreducible quotient of $V$; this is $V$ itself unless $p=2$ and $G = \SO(V) \cong \SO_{2n+1}$, in which case it is the $2n$-dimensional quotient by the radical of the bilinear form.

The following example is attributed to M.~Liebeck in \cite[Example 3.45]{MR2178661}.
\begin{example} \label{eg:Dn-cr}
Let $\Char k = 2$ and let $H$ be a group preserving a symplectic (resp.\ orthogonal) form on an irreducible module $W$. Let $V = W \perp W$ be an orthogonal direct sum. Then $V$ has a unique non-zero totally isotropic (resp.\ totally singular) $H$-submodule. Thus $H$ is $\GL(V)$-cr but not $\Cl(V)$-cr.
\end{example}
\proof By Schur's lemma, each proper non-zero $H$-submodule of $V$ is the image of a diagonal embedding $W\to V$ by $w\mapsto (aw,bw)$ for $[a:b]\in\mathbb{P}^1(k)$. If $B$ is an $H$-invariant non-degenerate alternating form on $W$ then $B((aw,bw),(au,bu))=(a^2+b^2)B(w,u)=(a+b)^2B(w,u)$. This is zero if and only if $a=b$. The orthogonal case is similar.\qed 

\subsection{\texorpdfstring{$G$}{G}-irreducibility in classical groups}
By the characterisation of parabolic subgroups above, a subgroup of $G = \Cl(V)$ is $G$-irr if and only if it preserves no proper non-zero totally isotropic (or totally singular) subspace. In more detail: 

\begin{prop} \label{prop:irredclass}
Let $G$ be a simple algebraic group of classical type with $V$ the natural irreducible $G$-module, and let $H$ be a subgroup of $G$. Then $H$ is $G$-irr if and only if one of the following holds:
\begin{enumerate}[label=\normalfont(\roman*)]
\item $G$ has type $A_n$ and $V$ is an irreducible $H$-module;
\item $G$ has type $B_n$, $C_n$ or $D_n$ and $V = V_1 \perp \ldots \perp V_k$ as $H$-modules, where the $V_i$ are non-degenerate, irreducible and pairwise inequivalent; \label{prop:irredclass-ii} 
\item $p=2$, $G$ has type $D_{n}$ and $H$ fixes a non-singular vector $v \in V$, such that $H$ is $G_v$-irr in the point stabiliser $G_v$ and does not lie in a subgroup of $G_{v}$ of type $D_{n-1}$. \label{prop:irredclass-iii}
\end{enumerate}
\end{prop}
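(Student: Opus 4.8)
The plan is to run everything off the characterisation recalled just above: a subgroup $H$ of $G = \Cl(V)$ is $G$-irr exactly when it stabilises no proper non-zero totally isotropic (respectively totally singular) subspace of $V$. Because parabolic subgroups of $G$ correspond to those of its image in $\Cl(V)$, I may assume $G = \Cl(V)$. Type $A_n$ is then immediate: the form is identically zero, every subspace is totally isotropic, and so $H$ is $G$-irr if and only if $V$ is an irreducible $H$-module, which is~(i). For the easy half in the other types: if $V = V_1 \perp \cdots \perp V_k$ with the $V_i$ non-degenerate, irreducible and pairwise inequivalent, then $V$ is a multiplicity-free semisimple $H$-module, so its $H$-submodules are precisely the partial sums $\bigoplus_{i \in S} V_i$; any non-zero one contains some $V_j$, which being non-degenerate is neither totally isotropic nor totally singular. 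Hence $H$ is $G$-irr. For $D_n$ with $p=2$ one must also check that (iii) forces $G$-irreducibility, which I do in the third paragraph below.

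For the converse I case-split. First suppose $H$ is $G$-irr and we are \emph{not} in the situation ``$p = 2$, $G$ of type $D_n$, and $H$ fixing a non-singular vector''. Pick an irreducible $H$-submodule $V_1$ and put $R_1 := V_1 \cap V_1^{\perp}$ (perpendicular with respect to the bilinear form $B$); being an $H$-submodule of $V_1$, it is $0$ or $V_1$. If $R_1 = V_1$ then $B|_{V_1} = 0$, and then $V_1$ is totally isotropic (for $C_n$, and for $B_n$ with $p = 2$, where $V$ is the $2n$-dimensional alternating module) or totally singular (for $B_n$, $D_n$ with $p$ odd, $q$ being recoverable from $B$), contradicting $G$-irreducibility---while for $D_n$ with $p = 2$ the quadratic form $q|_{V_1}$ has trivial polarisation, hence equals $\ell^2$ for a linear functional $\ell$ that is $H$-invariant because $q$ is, and either $\ell = 0$ (so $V_1$ is totally singular, a contradiction) or $V_1$ is $1$-dimensional and trivial, spanned by a non-singular $H$-fixed vector, contrary to our standing assumption. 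So $R_1 = 0$ throughout, $V = V_1 \perp V_1^{\perp}$, and iterating (the form on $V_1^{\perp}$ stays non-degenerate and the standing assumption persists) gives $V = V_1 \perp \cdots \perp V_k$ with each $V_i$ non-degenerate and irreducible. That the $V_i$ are pairwise inequivalent follows as in Example~\ref{eg:Dn-cr}: an $H$-isomorphism $\phi \colon V_i \to V_j$ gives $H$-submodules $\{(av, b\phi(v)) : v \in V_i\}$ on which the restricted (quadratic) form is $(a^2 + cb^2)$ times a fixed form for some nonzero scalar $c$ (by uniqueness up to scalar of invariant forms on an irreducible module), so a suitable $[a:b] \in \mathbb{P}^1$ makes it vanish---an $H$-stable totally singular subspace, contradicting $G$-irreducibility. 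So in this branch we land in (ii).

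The complementary branch is $p = 2$, $G$ of type $D_n$, with $H$ (still $G$-irr) fixing a non-singular vector $v$, and I claim this forces (iii). For $G_v$-irreducibility: an $H$-stable totally isotropic subspace of $W := \langle v\rangle^{\perp}/\langle v\rangle$ lifts to a totally isotropic $U \subseteq V$ with $v \in U$; writing $q|_U = \ell_U^2$ for the $H$-invariant square-root functional $\ell_U$, the kernel $\ker \ell_U$ is a non-zero $H$-stable totally singular subspace of $V$ not containing $v$, contradicting $G$-irreducibility. For non-containment in a $D_{n-1}$-subgroup of $G_v$: such a subgroup is the pointwise stabiliser of a non-degenerate $2$-space $\langle v, u\rangle$ with $u$ singular and $B(v,u) \neq 0$, so it fixes the singular line $\langle u\rangle$, which $H$ would then stabilise---impossible. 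The reverse implication (that (iii) gives $G$-irreducibility) runs the same machinery backwards: if $H$ in this setting is $G_v$-irr and in no $D_{n-1}$-subgroup of $G_v$ but stabilises a non-zero totally singular $U \subseteq V$, then analysing $U$ through the $H$-invariant functionals $u \mapsto B(v,u)$ and (the square root of) $q|_U$ and repeatedly passing to kernels drives $H$ either onto a non-zero totally isotropic subspace of $W$ or onto an $H$-stable decomposition $V = \langle v, u\rangle \perp Y$ with $u$ singular, $B(v,u) \neq 0$ and $Y$ non-degenerate---placing $H$ in the $D_{n-1}$-subgroup $\SO(Y)$ of $G_v$---both excluded.

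The characteristic-$2$, type $D_n$ case is the main obstacle: there ``totally isotropic'' and ``totally singular'' genuinely diverge, $V$ need not be a semisimple $H$-module, and one must both recognise the extra family (iii) and pin down the $D_{n-1}$-subgroups of the point stabiliser $G_v$ as the pointwise stabilisers of orthogonal splittings $V = \langle v, u\rangle \perp Y$. The device that tames it throughout is the elementary fact that a quadratic form of trivial polarisation is the square of a linear functional, which is automatically $H$-invariant, so that passing to its kernel converts ``totally isotropic but not totally singular'' data into either a genuine totally singular $H$-submodule or an $H$-fixed non-singular vector.
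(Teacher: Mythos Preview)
The paper itself does not give a proof of this proposition; it simply refers the reader to \cite[Prop.~3.1]{Litterick2018a}. Your argument is a correct self-contained proof along the expected lines: use the characterisation of parabolics as stabilisers of totally isotropic (resp.\ totally singular) flags, peel off non-degenerate irreducible orthogonal summands, and in the characteristic-$2$ type-$D_n$ case exploit the fact that a quadratic form with trivial polarisation is the square of an $H$-invariant linear functional to convert totally-isotropic data into either a totally-singular $H$-submodule or an $H$-fixed non-singular vector.

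A few places are compressed but unproblematic. In the inequivalence step you invoke ``uniqueness up to scalar of invariant forms on an irreducible module''; for $p=2$ type $D$ you are implicitly also using that the invariant \emph{quadratic} form on a non-trivial irreducible summand is unique up to scalar, which follows because two such forms with proportional polarisations differ by $\ell^2$ for an $H$-invariant linear $\ell$, necessarily zero on a non-trivial irreducible. Your identification of $D_{n-1}$-subgroups of $G_v$ with pointwise stabilisers of hyperbolic $2$-planes $\langle v,u\rangle$ is correct (these are the $G_v$-conjugates of the unique maximal-rank $D_{n-1}$ in $B_{n-1}$). In the final ``(iii) $\Rightarrow$ $G$-irr'' paragraph the mention of ``the square root of $q|_U$'' is superfluous, since $U$ is already assumed totally singular so $q|_U = 0$; only the functional $u \mapsto B(v,u)$ is doing any work there, and your sketch of passing to its kernel goes through cleanly.
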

(See \cite[Proposition~3.1]{Litterick2018a} for a full proof.)

\begin{remarks} \leavevmode
\begin{enumerate}[label=(\alph*)]
\item In case \ref{prop:irredclass-iii}, the bilinear form preserved by $G\cong\SO_{2n}$  is alternating when $\Char k = 2$, hence every $1$-space is isotropic. Now $G_v$ preserves the space $\left< v \right>^\perp$ of codimension $1$ in $V$, and as $v$ is nonsingular, $q$ is non-degenerate on this space, so $G_v\cong\SO_{2n-1}$ is simple of type $B_{n-1}$. For more details, see for instance \cite[Proposition~4.1.7]{MR1057341}.
\item This proposition can be applied to any simple group of type $A$--$D$, such as $\PSp_{2n}$ or the half-spin groups of type $D_n$. For if $G$ is simple and classical then $G$ is related to some $\Cl(V)$ by a quotient or an extension by a finite central subgroup---or if $G$ is a half-spin group, then one of each\footnote{In characteristic $2$ or if $G$ has type $A_{n}$ with $p \mid n + 1$, one must work with central subgroup \emph{schemes}, since the possibilities for $G$ can be isomorphic as abstract groups. However, this does not impact our discussion here.}. If $Z$ is such a finite central subgroup then as $G$ is connected and reductive, $Z$ is contained in all maximal tori, hence in all parabolic subgroups. It follows that a subgroup $H$ of $G$ is $G$-irr if and only if its preimage $HZ$ or quotient $HZ/Z$ is $G$-irr. Furthermore if $\Char k = 2$ then taking the quotient by the $1$-dimensional radical of the bilinear form induces an \emph{exceptional isogeny} $\psi: \SO_{2n+1}\to\Sp_{2n}$ (more details on p.~\pageref{bn_cn}). In this case, $\psi$ is bijective and it follows that $H\subseteq \SO_{2n+1}$ is $\SO_{2n+1}$-cr if and only if $\psi(H)$ is $\Sp_{2n}$-cr.
\end{enumerate}
\end{remarks}

\section{A strategy for classifying semisimple subgroups} \label{sec:strat}
We remind the reader that we wish to tackle the following:

\begin{mainproblem} Let $G$ be a simple algebraic group of exceptional type. Describe the poset of conjugacy classes of semisimple subgroups of $G$.\end{mainproblem}

Just as in representation theory, where one may begin by studying irreducible modules for a given object (immediately yielding the completely reducible modules) and then considering extensions, one can stratify the search for subgroups of $G$, beginning with $G$-cr subgroups and building up from these to non-$G$-cr subgroups.

Suppose that we know the maximal connected subgroups of simple groups up to a certain rank (such as $8$), and let $G$ be one of these simple groups. Let $H\subseteq G$ be semisimple and let $M$ be a maximal connected subgroup of $G$ containing $H$. If $M$ is reductive then $M=\D(M)\cdot Z(M)$ and since $H$ is perfect, we have $H\subseteq \D(M)$. Since $\D(M)$ is a central product of simple groups of smaller dimension, we hope to know $H$ by induction on the dimension. The problem is that one may have $H\subset M$ where $M$ is not reductive; this means $M=P$ is a parabolic subgroup by the Borel--Tits theorem. So we would also like to know the semisimple subgroups of $P=QL$. Since $L$ is reductive of the same rank as $G$ we may again assume that we know its semisimple subgroups. But it remains to find those semisimple subgroups of $P$ which are not conjugate to subgroups of $L$; in other words, the non-$G$-cr subgroups. Of course, Theorem \ref{thm:gcr-red} tells us that such subgroups do not exist if $\Char k$ is $0$ or large enough relative to the root system of $G$.

\subsection{\texorpdfstring{$G$}{G}-cr subgroups} \label{sec:gcralg}
To start, we need to know the $G$-conjugacy classes of $G$-cr semisimple subgroups of $G$. Theorem~\ref{thm:gcr-strong-red} reduces our task to finding those which are $L$-irr, as $L$ varies over all Levi subgroups of $G$. Let $\{L_1,\dots, L_s\}$ be a complete list of representatives of the $G$-conjugacy classes of Levi subgroups. Then we can find all conjugacy classes of semisimple $G$-cr subgroups at least once, by listing the $L_i$-conjugacy classes of $L_i$-irr subgroups. The full analogue of the Jordan--H\"older theorem given below says that this will give each $G$-class of $G$-cr subgroups exactly once. It can be proven in various ways (cf.~\cite[Propositions~2.8.2, 2.8.3]{Car93}) but the cleanest is via geometric invariant theory \cite[Theorem~5.8]{MR3042598}.

\begin{lemma} \label{lem:h-to-pi-h}
Let $H$ be a subgroup of the connected reductive group $G$, let $P$ be minimal amongst parabolic subgroups of $G$ containing $H$ and let $\pi : P \to P/\RR_u(P) = L$ be the natural projection to a Levi subgroup $L$. Then $\pi(H)$ is $L$-irr, and the $G$-conjugacy classes of $L$ and $\pi(H)$ are uniquely determined by the $G$-conjugacy class of $H$.

Moreover, $H$ is $G$-cr if and only if $H$ and $\pi(H)$ are $\RR_u(P)$-conjugate.
\end{lemma}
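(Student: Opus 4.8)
The three assertions are of rather different character, so I would treat them in turn, leaving the genuinely hard one for last.

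\emph{$L$-irreducibility of $\pi(H)$.} This is forced by the minimality of $P$. Suppose $\pi(H)$ lay in a proper parabolic subgroup $Q$ of $L$. Under the standard correspondence between parabolic subgroups of $L = P/\RR_u(P)$ and parabolic subgroups of $G$ lying between $\RR_u(P)$ and $P$ (see the discussion of parabolics in \S\ref{sec:redalggrp}), the preimage $\pi^{-1}(Q) = \RR_u(P)\cdot Q$ is a parabolic subgroup of $G$, and it is proper in $P$ because $Q$ is proper in $L$. But $H \subseteq \pi^{-1}(\pi(H)) \subseteq \pi^{-1}(Q) \subsetneq P$, contradicting the choice of $P$.

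\emph{The $G$-cr criterion.} For the forward implication, assume $H$ is $G$-cr. Since $H \subseteq P$, there is a Levi factor $L'$ of $P$ with $H \subseteq L'$; as the Levi factors of $P$ form a single $\RR_u(P)$-orbit, $L' = {}^{u}L$ with $u \in \RR_u(P)$, so ${}^{u^{-1}}H \subseteq L$. Because $\pi$ has kernel $\RR_u(P)$ and restricts to the identity on $L$, for each $h \in H$ we get $\pi(h) = \pi({}^{u^{-1}}h) = {}^{u^{-1}}h$; hence $\pi(H) = {}^{u^{-1}}H$, i.e. $H$ and $\pi(H)$ are $\RR_u(P)$-conjugate. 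Conversely, if $H = {}^{u}\pi(H)$ with $u \in \RR_u(P)$, then, $\pi(H)$ being $L$-irr by the first part, conjugating by $u$ shows $H$ is ${}^{u}L$-irr, and ${}^{u}L$ is a Levi factor of ${}^{u}P = P$; since $P$ is minimal over $H$, Theorem~\ref{thm:gcr-strong-red}(iv) now gives that $H$ is $G$-cr.

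\emph{Uniqueness of the $G$-classes of $\pi(H)$ and $L$.} Here I would invoke the geometric characterisation of complete reducibility. Fix a generic tuple $\mathbf{h}\in G^n$ for $H$, and recall that every parabolic of $G$ has the form $P_\lambda$ for a cocharacter $\lambda\colon\Gm\to G$, with Levi factor $C_G(\lambda)$; moreover $H\subseteq P_\lambda$ exactly when $\mathbf{h}' := \lim_{t\to0}\lambda(t)\cdot\mathbf{h}$ exists, and in that case $\mathbf{h}'$ lies in $C_G(\lambda)^n$ and is a generic tuple for the projection $\pi(H) = c_\lambda(H)$, where $c_\lambda\colon P_\lambda\to C_G(\lambda)$, $g\mapsto\lim_{t\to0}\lambda(t)g\lambda(t)^{-1}$, coincides with our $\pi$. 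The key input I would import (\cite[Thm.~5.8]{MR3042598}, resting on \cite[\S 3]{MR2178661} and \cite[\S 16]{MR952224}) is that $P$ is minimal over $H$ precisely when $\lambda$ can be chosen optimal in the Kempf--Rousseau sense, and that for such $\lambda$ the orbit $G\cdot\mathbf{h}'$ is the \emph{unique} closed $G$-orbit in $\overline{G\cdot\mathbf{h}}$; since an orbit closure contains exactly one closed orbit, the $G$-class of $\pi(H)$ depends only on that of $H$. The $G$-class of $L$ follows: since $\pi(H)$ is $L$-irr, Theorem~\ref{thm:gcr-strong-red}\ref{bmr-ii} forces $Z(L)^\circ$ to be a maximal torus of $C_G(\pi(H))$, so $L = C_G(Z(L)^\circ)$ is determined up to $G$-conjugacy by $\pi(H)$, all maximal tori of $C_G(\pi(H))$ being conjugate. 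The main obstacle is precisely this last step: everything reduces to the GIT fact that all minimal parabolics over $H$ yield the same closed orbit (equivalently, that $H$ has a well-defined semisimplification up to $G$-conjugacy), and this I would cite rather than reprove.
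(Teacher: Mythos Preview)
Your proof is correct and follows precisely the route the paper itself recommends: the paper does not give a self-contained proof of Lemma~\ref{lem:h-to-pi-h} but points to \cite[Thm.~5.8]{MR3042598} as the ``cleanest'' argument via geometric invariant theory, and your third part does exactly this, reducing uniqueness to the existence of a unique closed $G$-orbit in $\overline{G\cdot\mathbf{h}}$. Your treatments of $L$-irreducibility and the $G$-cr criterion are the standard direct arguments.

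One small imprecision worth flagging: it is not literally true that every minimal parabolic over $H$ arises from a Kempf--Rousseau \emph{optimal} cocharacter (the optimal parabolic is unique when it exists, whereas minimal parabolics over $H$ need not be). But your argument does not actually need optimality: for \emph{any} minimal $P=P_\lambda$, the limit $\mathbf{h}'$ lies in $\overline{G\cdot\mathbf{h}}$, and since $\pi(H)$ is $L$-irr (hence $G$-cr by Theorem~\ref{thm:BMRcentraliser}), Richardson's theorem gives that $G\cdot\mathbf{h}'$ is closed; uniqueness of the closed orbit in an orbit closure then finishes. So the slip is cosmetic rather than substantive.
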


As $L$ contains a maximal torus of $G$, the group $N_G(L)$ is an extension of $L$ by a finite group: the part of the Weyl group of $G$ which stabilises the root system of $L$. Thus $N_G(L)$ may induce non-trivial conjugacy between some simple factors of $L$. Such conjugacy is easy to describe. cf.~\cite[Corollary~12.11]{MR2850737}. In light of this, the key issue is to find the $\D(L)$-classes of $\D(L)$-irr semisimple subgroups for each $G$-class of Levi subgroup $L$; this includes the case that $L=G$. 

\begin{defn} \label{def:irr-g}
For a connected reductive group $G$, let $\mathrm{ConIrr}(G)$ denote the poset of $G$-classes of connected $G$-irr subgroups of $G$ under inclusion.
\end{defn}

Suppose that $[H]\in \mathrm{ConIrr}(G)$. It follows from the Borel--Tits theorem that $H$ is reductive. Since it cannot centralise a non-central torus of $G$, we get:
\begin{lemma}[{\cite[Lemma~2.1]{MR2043006}, \cite[Corollary~3.18]{MR2178661}}]
Suppose $G$ is semisimple and let $H$ be a $G$-irr connected subgroup. Then $H$ is semisimple and $C_G(H)$ is finite and linearly reductive. 
\end{lemma}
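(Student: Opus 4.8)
The plan is to extract everything from a single mechanism: a connected $G$-irr subgroup $H$ lies in no proper parabolic subgroup, hence in no proper Levi subgroup either, so it cannot centralise a non-central torus of $G$ (centralisers of tori being Levi subgroups, as recalled around Theorem~\ref{thm:BMRcentraliser}), and it cannot normalise a non-trivial unipotent subgroup of $G$ (by the Borel--Tits theorem, Theorem~\ref{thm:bt}, whose canonical parabolic $P$ with $U\subseteq\RR_u(P)$ additionally satisfies $N_G(U)\subseteq P$). We already know from the preceding remarks that $H$ is reductive, so $H=\D(H)\cdot Z(H)^\circ$ with $Z(H)^\circ$ a central torus.

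First I would show that $C_G(H)$ contains no non-trivial unipotent element. If $u\in C_G(H)$ is unipotent with $u\neq 1$, then the Zariski closure $U$ of $\langle u\rangle$ is a non-trivial unipotent subgroup of $G$ centralised, hence normalised, by $H$; Borel--Tits then produces a parabolic $P$ with $U\subseteq\RR_u(P)$ and $H\subseteq N_G(U)\subseteq P$, and $P$ is proper since $\RR_u(P)\neq 1$, contradicting $G$-irreducibility of $H$. Consequently $C_G(H)^\circ$, being a connected linear algebraic group with no non-trivial unipotent element, is a torus $S$. Since $H$ centralises $S\subseteq C_G(H)$ we have $H\subseteq C_G(S)$, and if $S\neq 1$ then $S$ is non-central ($Z(G)$ being finite because $G$ is semisimple), so $C_G(S)$ is a proper Levi subgroup of $G$ and thus lies in a proper parabolic subgroup, again contradicting $G$-irreducibility. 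Hence $S=1$ and $C_G(H)$ is finite.

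The two remaining assertions now follow at once. The central torus $Z(H)^\circ$ lies in $C_G(H)$, which is finite, so $Z(H)^\circ=1$ and $H=\D(H)$ is semisimple. And $C_G(H)$ is a finite group with no non-trivial unipotent element; in characteristic $p>0$ this means it has no element of order $p$ (an element of order $p$ being unipotent), so $|C_G(H)|$ is prime to $p$, whence $C_G(H)$ is linearly reductive by the criterion recalled before Theorem~\ref{thm:BMRcentraliser} (and every finite group is linearly reductive in characteristic $0$). The only point needing a little care is the use of Borel--Tits in the sharpened form $N_G(U)\subseteq P$, rather than the bare existence statement quoted in Theorem~\ref{thm:bt}; this is part of the standard statement of the theorem, and with it the lemma is a formal consequence of the definition of $G$-irreducibility, so I do not expect a genuine obstacle.
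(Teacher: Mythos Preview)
Your argument is correct and follows the same route the paper sketches in the sentence immediately preceding the lemma: Borel--Tits forces $H$ to be reductive and forbids $H$ from normalising a non-trivial unipotent subgroup, while the Levi-subgroup description of torus centralisers forbids $H$ from centralising a non-central torus; the rest is bookkeeping. The only extra ingredient you invoke is the sharpened Borel--Tits statement $N_G(U)\subseteq P$, which you rightly flag and which indeed holds by the canonical construction of $P$ in \cite{MR0294349} (since $P$ is self-normalising and $N_G(U)$ normalises the canonical $P$).
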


\begin{remark}Work of the third author and Liebeck in \cite{LieTho} classifies those finite subgroups which can occur as centralisers of semisimple $G$-irr subgroups for any simple algebraic group $G$.\end{remark}

As $H$ is not contained in any proper parabolic subgroup of $G$, it must be contained in some semisimple maximal connected subgroup $M$ of $G$. Moreover $H$ is $M$-irr, since any proper parabolic subgroup of $M$ is contained in a proper parabolic subgroup of $G$, again by the Borel--Tits theorem. When $M$ is simple of classical type, one may determine $\mathrm{ConIrr}(M)$ at once by use of Proposition \ref{prop:irredclass}. If instead $M$ is simple of exceptional type then induction on $\dim G$ yields $\mathrm{ConIrr}(M)$. For general semisimple $M$, knowledge of $\mathrm{ConIrr}(M_i)$ for each simple factor $M_i$ of $M$ yields $\mathrm{ConIrr}(M)$ (more on this shortly). Now, it can happen that an $M$-irr subgroup $H$ lies in a proper parabolic subgroup of $G$. We call $[H] \in \mathrm{ConIrr}(M)$ a \emph{candidate} and aim to decide which candidates are actually $G$-irr. Also, a $G$-irr subgroup can be contained in more than one semisimple maximal connected subgroup $M$. To detect this, we want to know when two candidates $H_1\subseteq M_1$ and $H_2\subseteq M_2$ are in the same $G$-class.

Returning to the issue of finding $\mathrm{ConIrr}(M)$ when $M$ is not simple, the parabolic subgroups of $M$ are the products of parabolic subgroups of its factors, so any $M$-irr subgroup needs to project to an $M_i$-irr subgroup of each simple factor $M_i$ \cite[Lemma~3.6]{Tho1}. As a partial converse, if for each $i$ we have an $M_i$-irr subgroup $H_i$, then $H_1 \ldots H_r\subseteq M$ is $M$-irr; however, these do not quite exhaust all the $M$-irr subgroups. To complete the list, one must also discuss \emph{diagonal subgroups}: Whenever $M$ has one or more simple factors of a given type, let $\hat H$ be the simply-connected simple group of this type. Then $\hat H$ admits a homomorphism to $M$ with non-trivial projection to each simple factor of the of the appropriate type. By definition, a diagonal subgroup is the commuting product of images of such homomorphisms; this will be $M$-irr precisely when it has non-trivial projection in every simple factor of $M$.

\begin{example} \label{ex:diagonalA1A1}
Let $M = M_1M_2$ where the factors are simple of the same type. Then $M$ has a diagonal subgroup $H$ with simply-connected cover $\hat H$. Then the composed maps $\hat H \to H \to M_i$ are isogenies. It follows that up to $M$-conjugacy, these compositions of powers of Frobenius maps $F$ (or their square roots in some very special cases) with automorphisms which induce a symmetry of the Dynkin diagram of $\hat H$. For instance, if $M$ has type $A_1A_1$ then, since the $A_1$ Dynkin diagram has trivial symmetry group, $H$ corresponds to a pair of non-negative integers $(r,s)$ and the map $\hat H\to H$ is $x\mapsto (F^r(x),F^s(x))$. Since $H\cong F(H)$ we may assume $rs=0$. For brevity, we use the notation $A_1 \hookrightarrow A_1 A_1$ via $(1^{[r]},1^{[s]})$ for these diagonal subgroups. See \cite[Chapters~2,11]{ThomasIrreducible} for further discussion and notation.
\end{example}

To recap, our recipe is now to iterate through the maximal connected subgroups $M$ of $G$, collecting (semisimple) candidates $H$. We throw away all those candidates which fall into a proper parabolic subgroup of $G$, and then determine the poset $\mathrm{ConIrr}(G)$ by identifying conjugacy amongst the remaining candidates.

The passage from $G$-irr semisimple subgroups to all $G$-cr semisimple subgroups is now easy using Lemma~\ref{lem:h-to-pi-h}. Suppose that $H$ is $\D(L)$-irr. By the above remarks we can write down the $\D(L)$-classes of $\D(L)$-irr subgroups; then from the lemma we need only establish conjugacy amongst those classes by examining the action of the stabiliser of $L$ in the Weyl group of $G$.

\begin{remark}
A related representation-theoretic question is to classify triples $(G,H,V)$ where $H \subseteq G$ and $V$ is an irreducible $G$-module which remains irreducible as an $H$-module. This property is strictly stronger than $G$-irreducibility and has its own extensive literature, cf.~\cite{MR3931415} and the references therein.
\end{remark}

\subsection{Non-\texorpdfstring{$G$}{G}-cr subgroups} \label{s:nonGcrstrat}
We now turn our attention to non-$G$-cr semisimple subgroups $H$ (recalling again that $G$ is connected, reductive). Then $H\subseteq P$ for some proper parabolic subgroup $P=QL$ with $Q=\RR_u(P)$ and we may assume $P$ is minimal subject to containing $H$. Let $\bar H$ denote the image of $H$ in $L$ under the projection $\pi:P\to L$. Then by Lemma~\ref{lem:h-to-pi-h} $\bar H$ is $L$-irr (hence $G$-cr) and is not $\RR_u(P)$-conjugate to $H$. We may assume from the previous section that we know $\bar H$ up to conjugacy. To make further progress, we use non-abelian cohomology, whose techniques are similar to those employed in Galois cohomology.

\label{bn_cn}Firstly, note that either $\pi: H\to \bar H$ is an isomorphism of algebraic groups or a very special situation occurs, namely, $p = 2$ and $H$ has a simple factor $\SO_{2n+1}$ with image $\Sp_{2n}$ in $\bar H$, so that the differential $d\pi: H \to \bar H$ has a non-zero kernel.\footnote{On the level of schemes, $H$ intersects $Q$ non-trivially, giving rise to a non-zero scheme-theoretic kernel. See \cite{BT73} for the theory surrounding this map, or one of \cite[Lemma~2.2]{PY06}, \cite{Vas05}, \cite{DS96} for more concrete treatments.}

Suppose for now that this special situation does not hold, so that $H$ and $\bar H$ are isomorphic as algebraic groups and $H$ is a complement to $Q$ in the semidirect product $ \bar H$. Any element of $H$ can thus be written uniquely as $\gamma(h)h$ with $h\in\bar H$, for some map $\gamma:\bar H\to Q$ which is a morphism of varieties. The definition of the semidirect product implies that $\gamma$ satisfies a $1$-cocycle condition; namely:
\[ \gamma(gh)= \gamma(g) (g \cdot \gamma(h)).\]
If $H'$ is another complement to $Q$, corresponding to a map $\gamma'$, then $H$ is $Q$-conjugate to $H'$ if and only if $\gamma$ is related to $\gamma'$ via a coboundary; in other words the $Q$-conjugacy classes of complements are given by classes $[\gamma]\in\opH^1(\bar H,Q)$. We leave the description of the precise relationship of $\gamma$ and $\gamma'$ to \cite[\S 2]{SteUni}, but note that since $Q$ is typically non-abelian, the set $\opH^1(\bar H,Q)$ does not admit the structure of a group---rather, it is only a pointed set, having a distinguished element corresponding to the class of the trivial cocycle. In contrast, when $Q$ has the structure of an $\bar H$-module---i.e.~$Q$ is a vector space on which $\bar H$ acts linearly---then both $Q$ and $\opH^1(\bar H,Q)$ are naturally $k$-vector spaces. In the latter case it can be shown that $\opH^1(\bar H,Q)$ is isomorphic to the first right-derived functor (applied to $Q$) of the fixed point functor $\opH^0(\bar H,?)$ in the category of rational $G$-modules. For more on this last point, see \cite[I.4]{Jan03}.

\begin{example}
By way of illustration, we list the semisimple subgroups of $G = \SL_3$. There are four parabolic subgroups of $G$ up to conjugacy, respectively stabilising flags with submodule dimensions $(3)$, $(2,1)$, $(1,2)$ and $(1,1,1)$. The first is $G$ itself, the last is a Borel subgroup (whose only reductive subgroups are tori) and the other two have $G$-conjugate Levi subgroups $\GL_2$, one of which can be described as the image of the embedding \[\GL_2 \to \SL_3, \quad A \mapsto \left(\begin{array}{c|c} A & 0 \\ \hline 0 & \det(A)^{-1} \end{array}\right).\]

Of course $G$ is $G$-irr; and if $L$ is a Levi subgroup isomorphic to $\GL_2$ then $\D(L)\cong\SL_2$, which has no proper semisimple subgroups. If $p\neq 2$, then there is one further $G$-irr subgroup $\PGL_2$, embedded via the irreducible adjoint action on its Lie algebra. These are all the $G$-cr semisimple subgroups.

Suppose that $H\subseteq G$ is semisimple and non-$G$-cr. By rank considerations, $\bar H$ is isomorphic to $\SL_2$, lying in a parabolic subgroup $P = QL$ with $L \cong \GL_2$. It is easy to check that the conjugation action of $\bar H$ on $Q$ gives it the structure of the natural module $L(1)$. This means $\opH^1(\bar H,Q)=0$, which rules out the existence of non-$G$-cr subgroups unless $p=2$.

If $p = 2$, however, the action of $\SL_2$ on its Lie algebra and its dual are not completely reducible: Both are indecomposable with two composition factors. The first is isomorphic to the Weyl module $V(2)$ which has $L(2)$ in the head and $L(0)$ in its socle; we denote this $L(2)/L(0)$. The second is upside down: $\opH^0(2)\cong L(0)/L(2)$. This yields two non-conjugate, non-$G$-cr subgroups $\PGL_2$, one in each of the two standard parabolic subgroups of $G$.\footnote{In fact, this is the `special situation' mentioned earlier, since $H = \SL_2$ is abstractly isomorphic to its image $\PGL_2$ in $G = \SL_3 = \SL(\Lie(\SL_2))$.}

Concluding, the subgroups above---$G$ itself, the two derived Levi subgroups $\SL_2$, a $G$-irr subgroup $\PGL_2$ (when $p \neq 2$) and two non-$G$-cr subgroups $\PGL_2$ (when $p = 2$)---are now all the non-trivial semisimple subgroups of $G$.
\end{example}

\subsection{Abelian and non-abelian cohomology} \label{ss:cohom}

Let $G$ be a connected reductive group acting on a $G$-module $V$. To mount a proper investigation of $\opH^1(G,V)$, a scheme-theoretic treatment such as \cite{Jan03} is essential. This is not least because one can make use of the Lyndon--Hochschild--Serre spectral sequence
\[ E_2^{ij}=\opH^i(G/N,\opH^j(N,V))\Rightarrow \opH^{i+j}(G,V)\] for calculations, where $N$ is a normal subgroup scheme of $G$. In this framework, $N$ is allowed to be an \emph{infinitesimal subgroup scheme}, the most important example being the Frobenius kernel $G_1:=\ker(G\to F(G))$ of $G$. At the level of points, $G_1=\{1\}$, but $\Lie(G_1)=\Lie(G)$ has far more structure. We leave the interested reader to pursue this further, but give some references: The first general investigation of $\opH^1(G,V)$ using the LHS spectral sequence applied to $G_1\triangleleft G$ is probably that of Jantzen in \cite{Jan91}, which connects $\opH^1(G,L(\lambda))$ with the structure of the Weyl module $V(\lambda)$. Other relevant papers are too numerous to mention, but some highlights are \cite{CPSV77}, \cite{BNP04-Frob}, \cite{BNPPSS}, \cite{Par07}.

On the understanding that $\opH^1(\bar H,V)$ has been well-studied for $V$ an $\bar H$-module, let us return to the calculation of $\opH^1(\bar H,Q)$, where $P = QL$ is a parabolic subgroup of a reductive algebraic group $G$. The fact that $Q$ is connected, smooth and unipotent means that it admits a filtration
\[ Q = Q_0 \supseteq Q_1 \supseteq \cdots \supseteq Q_{n} = 1 \]
for some $n$, such that $Q_i\triangleleft Q$ and the subquotients $Q_i/Q_{i+1}$ admit the structure of $\bar H$-modules. The statement for general connected unipotent groups $Q$ can be found in \cite[Theorem~3.3.5]{SteUni} and \cite[Theorem~C]{McN14}, but one can be more explicit here since $Q$ has a filtration by subgroups $Q_i$ generated by root subgroups of $G$. Following \cite{ABS90}, let $P$ and $L$ be a standard parabolic and Levi subgroup, corresponding to a subset $I$ of the simple roots $\Delta$ (which can be identified with nodes of the Dynkin diagram of $G$). Then $P$ is generated by a maximal torus and root subgroups $U_{\alpha}$ where $\alpha$ runs through positive roots, as well as negative roots in $I$. Expressing each root $\alpha$ uniquely as
\[\alpha = \left(\sum_{\alpha_i \in I} c_i \alpha_i\right) + \left(\sum_{\alpha_j \in \Delta \setminus I} d_j \alpha_j\right), \]
the roots in $P$ are those with $\sum d_i \ge 0$; the roots occurring in $L$ are those with $\sum d_i = 0$, and those in $Q$ have $\sum d_i > 0$. The quantity $\sum d_j$ is called the \emph{level} of the root, and $\sum d_j \alpha_j$ is called its \emph{shape}. For each $i > 0$, denote by $Q_i$ the subgroup generated by root subgroups of level $i$; then the Chevalley commutator relations imply that each $Q_i$ is normal in $Q$, and in fact $Q_i/Q_{i+1}$ is central in $Q/Q_{i+1}$. Furthermore, from knowledge of the root system of $G$, say by reference to \cite{Bourb05}, one can write down explicitly the representations $Q_i/Q_{i+1}$ as Weyl modules $V(\lambda)$ for the Levi subgroup $L$. 

\begin{example} Recall that the Dynkin diagram of $G_2$ is \dynkin[%
      label,
      reverse arrows,
      text style/.style={scale=0.5},
      label macro/.code={\drlap{#1}}]G2. The nodes represent the two simple roots which we denote by lists of coefficients, with $\alpha_1=10$ and $\alpha_2=01$. The remaining positive roots, in order of height, are $11$, $21$, $31$, $32$. Let $P$ be the standard parabolic containing the negative of $\alpha_2$, i.e.~$-01$. Then a Levi factor $L$ of $P$ has roots $\pm 01$, and $\RR_u(P)$ has roots of three levels $\{11,10\}$, $\{21\}$ and $\{31,32\}$, which one checks induce modules $L(1)$, $L(0)$, $L(1)$ for the Levi subgroup $\D(L)\cong\SL_2$.\end{example}

Once the modules $Q_i/Q_{i+1}$ and cohomology groups $\opH^{1}(\bar H,Q_i/Q_{i+1})$ are understood, one can take the direct sum $\mathbb V:=\bigoplus\opH^1(\bar H,Q_i/Q_{i+1})$ and use this to approximate $\opH^1(\bar H,Q)$. In fact, one can define a partial map $\mathbb V\to \opH^1(\bar H,Q)$, which turns out to be surjective, using a lifting process we now describe. Given any short exact sequence of $\bar H$-groups
\[1\to R\to Q\to S\to 1\]
with $R$ contained the centre of $Q$, there is an exact sequence of $\bar H$-sets\footnote{Here, an exact sequence of pointed sets means only that the image of each map is the preimage of the distinguished element under the next.}
\begin{equation}
\begin{split}
  1\to &\opH^0(\bar H,R)\to \opH^0(\bar H,Q)\to \opH^0(\bar H,S)\label{eq:long}\\
  &\stackrel{\delta_{\bar H}}{\to} \opH^1(\bar H,R)\to \opH^1(\bar H,Q)\to \opH^1(\bar H,S)\stackrel{\Delta_{\bar H}}{\to} \opH^2(\bar H,R).
\end{split}\end{equation}
Now, taking $R = Q_{i}/Q_{i+1}$ and $S = Q/Q_{i}$ for each $i$, one can use these `long' exact sequences to lift elements of $\opH^1(\bar H,Q/Q_{i})$ to elements of $\opH^1(\bar H,Q/Q_{i+1})$, eventually reaching $\opH^{1}(\bar H, Q)$ itself, as long as we understand two issues:
\begin{enumerate}[label=(\roman*)]
  \item When is $\opH^1(\bar H,Q/Q_i)\to\opH^1(\bar H,Q)$ not injective? \label{q1}
  \item When is $\opH^1(\bar H,Q/Q_i)\to\opH^1(\bar H,Q/Q_{i+1})$ not defined? \label{q2}
\end{enumerate}
Question \ref{q1} asks whether $\delta_{\bar H}$ is non-zero. This happens precisely when cocycle classes in $\opH^1(\bar H,R)$ fuse inside $\opH^1(\bar H, Q)$ due to conjugacy induced by the fixed points $S^{\bar H}$. Question~\ref{q2} asks whether $\Delta_{\bar H}$ is non-zero. If so then cocycles in $\opH^1({\bar H},S)$ are obstructed from lifting to cocycles in $\opH^1({\bar H},Q)$. In particular, this only happens when $\opH^2({\bar H},R)\neq 0$.

In the end, this lifting process allows us to calculate $\opH^1(\bar H,Q)$ completely. The matter is easy if we can show the maps $\delta_{\bar H}$ and $\Delta_{\bar H}$ to be zero. However, this is often not the case and one must resort to explicit computations with cocycles; this is the approach taken in \cite{SteF4}.

\addtocounter{chapter}{1}
\chapter*{Part II. Subgroup structure of exceptional algebraic groups}

\label{sec:subgroups}

\setcounter{section}{0}
\section{Maximal subgroups}

Work on classifying maximal sub-objects of Lie type objects dates back to Sophus Lie \cite{Lie80}. Taking inspiration from Galois's work on univariate polynomials, \emph{op.~cit.} develops `continuous transformation groups'---now Lie groups---with a view to classifying differential equations in terms of symmetries amongst their solutions. One builds up group actions from primitive actions, corresponding to maximal subgroups, motivating Lie to describe such subgroups. The same problem for finite groups was not to be posed until a paper of Aschbacher and Scott \cite{MR772471} rather later, and Lie concentrated on connected subgroups of connected Lie groups. Here, the $\exp$ and $\log$ make this equivalent to finding maximal subalgebras $\m$ of real Lie algebras $\g$, and Lie solved the problem when $\dim\g\leq 3$. Otherwise the question lay dormant for another fifty years.

Using the Killing--Cartan--Weyl classification of finite-dimensional complex simple Lie algebras, E.~Dynkin solved Lie's problem over $\mathbb{C}$ \cite{ebd}. We give a quick example---stolen from Seitz's excellent tribute in \emph{op.~cit.}---to illustrate his results. As is well-known, the complex $3$-dimensional Lie algebra $\sl_2$ has a unique irreducible representation of each degree up to equivalence. This amounts to an embedding of $\sl_2$ into $\so_{2n-1}$ or $\sp_{2n}$, where $2n-1$ or $2n$ respectively is the degree. Dynkin showed that for $n\geq 2$, the image of each of these embeddings is a maximal subalgebra, with precisely one exception: when $n=7$ and the exceptional Lie algebra of type $G_2$ has a self-dual $7$-dimensional module, it occurs as a (maximal) subalgebra of $\so_7$ and in turn contains the irreducible $\sl_2$ as a maximal subalgebra. There is a remarkably short list of such situations. Dynkin in effect classified the maximal subalgebras of the classical Lie algebras $\sl_{n+1}$, $\so_n$ and $\sp_{2n}$ by classifying non-maximal ones which nevertheless act irreducibly on the natural modules for those algebras. A key ingredient in Dynkin's work was detailed information on the representations of these Lie algebras, developed by Weyl and others, in terms of the weights for their Cartan subalgebras.

Dealing with the Lie algebras of exceptional type required Dynkin to adopt a more exhaustive approach. He first showed how to produce all the semisimple subalgebras of $\g=\Lie(G)$ containing a given Cartan subalgebra $\h$, so-called \emph{regular} subalgebras. Since root spaces are $1$-dimensional, it follows that such a subalgebra will be the sum of $\h$ and the root spaces corresponding to a subset $\Phi'$ of the root system $\Phi$ of $G$. %
Dynkin showed that one can find all regular subalgebras by iteratively extending the Dynkin diagram (adding a node corresponding to the negative of the highest long root) and then deleting some nodes.

\begin{example}
Let $\Phi$ be an irreducible root system of type $F_4$ with roots labelled as in the following diagram.  
\begin{center}
\dynkin[label]F4
\end{center}
Then the highest long root is $\alpha_0 =  2 \alpha_1 + 3\alpha_2 + 4\alpha_3 + 2\alpha_4$. The only simple root one can add to $-\alpha_0$ and still get a root is $\alpha_1$. Therefore, the extended Dynkin diagram is:
\begin{center}
\dynkin[label, extended]F4
\end{center}
The maximal subalgebras of maximal rank correspond to deleting a node of this extended diagram corresponding to a simple root with prime coefficient in the expression of $\alpha_0$: in our case, this is $\alpha_1, \alpha_2$ and $\alpha_4$. Removing $\alpha_1$ gives a Dynkin diagram of type $A_1 C_3$, removing $\alpha_2$ gives type $A_2A_2$ and removing $\alpha_4$ gives type $B_4$. 
\begin{center}
\dynkin[labels={0}]A1 \ \ \ \ \ \begin{dynkinDiagram}[labels={2,3,4}]A3 \dynkinDefiniteDoubleEdge 12 \end{dynkinDiagram} 

\dynkin[labels={0,1}]A2 \ \ \ \ \ \dynkin[labels={3,4}]A2

\dynkin[labels={0,1,2,3}]B4 
\end{center}
\end{example}

Non-semisimple subalgebras were described by a theorem of Morozov\footnote{the precursor of Borel--Tits' Theorem~\ref{thm:bt}.} and the maximal ones are the maximal parabolics. This leaves those maximal subalgebras which do not contain a Cartan subalgebra, so-called \emph{$S$-subalgebras}. Dynkin tackled those of type $\sl_2=\langle e,f,h\rangle$ first, associating to each class of these under the adjoint action of $G$ a Dynkin diagram with a label of $0$, $1$ or $2$ above each node determining the conjugacy class of $h$. It turns out there is a unique conjugacy class of $\sl_2$-subalgebras such that $h$ is a \emph{regular element}, i.e.~the centraliser $\g_h$ of $h$ in $\g$ is as small as possible, that is, $\g_h$ is a Cartan subalgebra. The corresponding Dynkin diagram for this class of subalgebras has a $2$ above each node and it is usually maximal. From there, if $\sl_3$ is a subalgebra of $\g$, then one can look to build it up from its own regular $\sl_2$.

There are many reasons to extend this theory to positive characteristic, not least because algebraic groups and their points over finite fields give information about finite groups, for instance in furtherance of the Aschbacher--Scott programme. Over several important monographs of Seitz \cite{MR888704,MR1048074}, Liebeck--Seitz \cite{MR2044850} and Testerman \cite{MR961210}, Dynkin's classification is extended to describe the maximal subgroups of simple algebraic groups over algebraically closed fields of positive characteristic.

Unfortunately, there is no space to do anything else but state the main result in case when $G$ is exceptional. In the following, conditions such as $p \ge 13$ also include the case $p = 0$. Note that $\tilde{H}$ denotes a subgroup of type $H$ whose root groups are generated by short roots of $G$. 

\begin{theorem}[{\cite[Corollary~2]{MR2044850}}, {\cite[Theorem.~1]{MR4375734}}] \label{t:maximalexcep}
Let $G$ be a simple algebraic group of exceptional type in characteristic $p$ and let $M$ be maximal amongst connected subgroups of $G$. Then $M$ is either parabolic or is $G$-conjugate to precisely one subgroup $H$ in Table~\ref{tab:maximalexcep}, where each $H$ denotes one $G$-conjugacy class of subgroups.
\end{theorem}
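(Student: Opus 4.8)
The plan is to reduce, via the Borel--Tits theorem (Theorem~\ref{thm:bt}), to a classification of $G$-irreducible semisimple subgroups that are maximal among connected subgroups, and then to run the Dynkin--Seitz machinery. First, a maximal connected subgroup $M$ of the semisimple group $G$ is either parabolic or reductive, by Theorem~\ref{thm:bt} and the consequence noted immediately after it; if $M$ is parabolic we are in one of the stated cases. If $M$ is reductive and lies in a proper parabolic subgroup $P$, then $M\subsetneq P\subsetneq G$ contradicts maximality, so $M$ is $G$-irr, hence semisimple with finite centraliser (a $G$-irr connected subgroup of a semisimple group is semisimple). Thus from now on $M$ is $G$-irr, semisimple and maximal connected, and the task is to show such $M$ appear in Table~\ref{tab:maximalexcep}, that each table entry really is maximal, and that distinct entries are non-conjugate while each forms a single $G$-class (the last point via the finite group $N_G(M)/MC_G(M)$ together with Theorem~\ref{thm:BMRClifford}).

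Next I would split by rank. If $\rk(M)=\rk(G)$ then $M$ is a subsystem subgroup, and the candidates are produced by the Borel--de Siebenthal algorithm: the extended-Dynkin-diagram procedure illustrated for $F_4$ on page~\pageref{dynkins}, repeatedly adjoining the negative of the highest root and deleting a node whose coefficient is prime. Since $G$ has rank at most $8$ this leaves finitely many diagrams; for each one checks whether the associated subsystem subgroup is maximal or lies in a larger one, with only a handful of characteristic-dependent coincidences (and the exceptional isogenies in characteristics $2$ and $3$) needing care. This part is essentially combinatorial.

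The substance is the case $\rk(M)<\rk(G)$. The strategy, due to Dynkin over $\C$ and carried into positive characteristic by Seitz, Liebeck--Seitz and Testerman, has two stages. \emph{Generating candidates:} one uses the action of $M$ on a small faithful $G$-module --- the minimal module ($7$-, $26$-, $27$-, $56$-dimensional for $G_2,F_4,E_6,E_7$, and $\Lie(G)$ itself for $E_8$) --- together with the restriction of $\Lie(G)$, to constrain the isomorphism type of $M$ and the high weights of its composition factors: the ambient dimension is bounded, so in large characteristic Weyl's dimension formula reduces this to a finite check, and in small characteristic one argues directly with the known small-dimensional modules. \emph{Testing maximality:} for each candidate embedding $M\hookrightarrow G$ one must decide whether $M$ lies in a proper reductive overgroup $M'$. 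By induction on $\dim G$ the subgroup structure of every proper reductive subgroup may be taken as known (for classical factors via Liebeck--Seitz, after Aschbacher; for exceptional factors of smaller rank via the present theorem and the reduction of \S\ref{sec:gcralg}), which turns the search for such an $M'$ into a finite problem; centraliser computations and the decomposition $\Lie(G)=\Lie(M)\oplus\mathfrak{c}$ with $\mathfrak{c}$ a complementary $M$-submodule (available when $(G,M)$ is a reductive pair, cf.\ Theorem~\ref{thm:sep-red}) are the main tools for locating or excluding overgroups.

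The hard part --- and the origin of the corrections flagged in the abstract --- is testing maximality when $p$ is small. Then $\Lie(G)$ need not be semisimple over $M$, so reductive-pair and dimension-count arguments fail; non-$G$-cr overgroups can intervene; and for $M$ of type $A_1$ or $A_2$ one must track Frobenius twists as in Example~\ref{ex:diagonalA1A1}. One is forced into explicit module and $1$-cohomology computations, and this is exactly the content of the recent work \cite{MR4375734} that completes, and in places repairs, the earlier literature. Collating the maximal-rank list, the large-characteristic candidate analysis, and the small-characteristic computations, and then resolving conjugacy among the survivors, yields Table~\ref{tab:maximalexcep} and hence the theorem.
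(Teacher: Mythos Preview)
The paper does not give its own proof of this theorem: it is a survey article, and immediately before the statement the authors write ``Unfortunately, there is no space to do anything else but state the main result in case when $G$ is exceptional.'' The theorem is simply quoted from \cite{MR2044850} and \cite{MR4375734}. So there is no proof in the paper to compare your proposal against.

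That said, your outline is a fair high-level description of the strategy actually pursued in the cited references (Seitz's memoir, Liebeck--Seitz, and the Craven--Stewart--Thomas correction). The initial reduction via Borel--Tits to ``parabolic or $G$-irr semisimple'' is exactly right, as is the split into maximal-rank subgroups (handled by Borel--de Siebenthal) versus subgroups of smaller rank (handled by restricting the minimal and adjoint modules and running through candidates). A couple of your details are slightly off, however. First, $1$-cohomology is the tool for classifying \emph{non-$G$-cr} subgroups (\S\ref{s:nonGcrstrat}), not for testing maximality of $G$-irr candidates; the correction in \cite{MR4375734} concerns a specific overlooked $F_4$ embedding in $E_8$ when $p=3$, discovered by direct construction and module analysis rather than via cohomology. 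Second, invoking the reductive-pair decomposition $\Lie(G)=\Lie(M)\oplus\mathfrak{c}$ as a ``main tool'' is optimistic in exactly the small characteristics where the difficulties arise, since that is precisely when $(G,M)$ may fail to be a reductive pair. These are minor points in what is otherwise a reasonable roadmap, but as written your proposal is an outline of the literature rather than a self-contained proof.
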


\begin{longtable}{ll}
\caption{The reductive maximal connected subgroups of exceptional algebraic groups.} \label{tab:maximalexcep} \\
\toprule $G$ & $H$ \\ \midrule
$G_2$ & ${A}_2$, $\tilde{A}_2$ $(p =3)$, $A_1 \tilde{A}_1$, $A_1$ $(p \geq 7)$ \\
$F_4$ & $B_4$, $C_4$ $(p=2)$, ${A}_1 C_3$ $(p \neq 2)$, $A_1 G_2$ $(p \neq 2)$, $A_2 \tilde{A}_2$, \\ & $G_2$ $(p = 7)$, $A_1$ $(p \geq 13)$ \\
$E_6$ & ${A}_1 A_5$, ${A}_2^3$, $F_4$, $C_4$ $(p \neq 2)$, $A_2 G_2$, $G_2$ $(p \neq 7; 2$ classes), \\&  $A_2$ $(p \geq 5; 2$ classes) \\
$E_7$ & ${A}_1 D_6$, ${A}_2 A_5$, $A_7$, $G_2 C_3$, $A_1 F_4$, $A_1 G_2$ $(p \neq 2)$, $A_2$ $(p \geq 5)$, \\& $A_1 A_1$ $(p \geq 5)$, $A_1$ $(p \geq 17)$, $A_1$ $(p \geq 19)$ \\
$E_8$ & $D_8$, ${A}_1 E_7$, ${A}_2 E_6$, $A_8$, ${A}_4^2$, $G_2 F_4$, $F_4$ $(p = 3)$, $B_2$ $(p \geq 5)$, \\ & $A_1 A_2$ $(p \geq 5)$, $A_1$ $(p \geq 31)$, $A_1$ $(p \geq 29)$, $A_1$ $(p \geq 23)$ \\ \bottomrule
\end{longtable}

\begin{remarks} \leavevmode
\begin{enumerate}
  \item As discussed in \S\ref{parabsec}, the classes of parabolic subgroups are in bijection with subsets of the simple roots $\Delta$, and the maximal ones correspond to subsets of size $|\Delta| -1$.  

\item In the caption of Table \ref{tab:maximalexcep} we use the phrase {\it reductive maximal connected}. By this mean we reductive subgroups which are maximal amongst connected subgroups. Similarly, in the caption to the next table we say {\it reductive maximal positive-dimensional} to mean reductive subgroups which are maximal amongst positive-dimensional subgroups. 

\item The subgroups of maximal rank can be enumerated using the Borel--de-Siebenthal algorithm. This is a more general version of Dynkin's procedure for regular subalgebras, and includes some extra cases where the Dynkin diagram has an edge of multiplicity $p = \Char k$. For example if $G$ is of type $F_4$, then there is a maximal regular subgroup of type $C_4$. See \cite[\S 13.2]{MR2850737} for a complete explanation. 

\item When $G = E_6$ there are two classes of maximal subgroups of type $G_2$ ($p \geq 7$) and $A_2$ ($p \geq 5$). The graph automorphism of $G$ interchanges these two classes. See \cite{MR1100666} for a proof of this and an explicit construction of the maximal subgroups.   

\item The maximal subgroup of type $F_4$ when $G=E_8$ and $p=3$ was overlooked in \cite{MR1048074} and subsequently missed in \cite{MR2044850}. This was rectified by Craven and the second two authors; more information can be found in \cite{MR4375734}.
\end{enumerate}
\end{remarks}

It is also natural to ask about non-connected maximal subgroups. For example, finite subgroups of $E_8$ remain unclassified. See \S\ref{ss:maxfinsub} for a brief description of the latest developments. However one can successfully weaken `connected' to `positive-dimensional': 

\begin{theorem}[{\cite[Corollary~2]{MR2044850}}] \label{t:posdimmaximalexcep}
Let $G$ be a simple algebraic group of exceptional type in characteristic $p$. Let $M$ be a positive-dimensional maximal subgroup of $G$. Then $M$ is either parabolic or $G$-conjugate to precisely one subgroup $H$ as follows. Each isomorphism type of $H$ denotes one $G$-conjugacy class of subgroups, and the notation $T_i$ indicates an $i$-dimensional torus.

\clearpage

\begin{longtable}{ll}
\caption{The reductive maximal positive-dimensional subgroups of exceptional algebraic groups.} \label{tab:maxexcepposdim} \\
\midrule $G$ & $H$ \\ \midrule
$G_2$ & ${A}_2.2$, $\tilde{A}_2.2$ $(p =3)$, ${A}_1 \tilde{A}_1$, $A_1$ $(p \geq 7)$ \\

$F_4$ & $B_4$, $D_4.S_3$, $C_4$ $(p=2)$, $\tilde{D}_4.S_3$ $(p=2)$, ${A}_1 C_3$ $(p \neq 2)$, \\ & $A_1 G_2$ $(p \neq 2)$, $({A}_2 \tilde{A}_2).2$, $G_2$ $(p = 7)$, $A_1$ $(p \geq 13)$ \\

$E_6$ & ${A}_1 A_5$, $({A}_2^3).S_3$, $(D_4T_2).S_3$, $T_6.W(E_6)$, $F_4$, $C_4$ $(p \neq 2)$, $A_2 G_2$, \\ & $G_2$ $(p \neq 7$), $A_2.2$ $(p \geq 5)$ \\

$E_7$ & ${A}_1 D_6$, $({A}_2 A_5).2$, $A_7.2$, $(A_1^3D_4).S_3$, $(A_1^7).\text{PSL}_3(2)$, $(E_6T_1).2$, \\ & $T_7.W(E_7)$, $G_2 C_3$, $A_1 F_4$, $(2^2 \times D_4).S_3$,  $A_1 G_2$ $(p \neq 2)$, \\ & $A_2.2$ $(p \geq 5)$, $A_1 A_1$ $(p \geq 5)$, $A_1$ $(p \geq 17)$, $A_1$ $(p \geq 19)$ \\

$E_8$ & $D_8$, ${A}_1 E_7$, $({A}_2 E_6).2$, $A_8.2$, $({A}_4^2).4$, $(D_4^2).(S_3 \times 2)$, \\ & $(A_2^4).(\text{GL}_2(3))$, $(A_1^8).\text{AGL}_3(2)$, $T_8.W(E_8)$, $G_2 F_4$, \\ & $A_1 (G_2^2).2$ $(p \neq 2)$,   $F_4$ $(p = 3)$, $B_2$ $(p \geq 5)$, $A_1 A_2$ $(p \geq 5)$, \\ & $A_1$ $(p \geq 31)$, $A_1$ $(p \geq 29)$, $A_1$ $(p \geq 23)$, $A_1 \times S_5$ $(p \geq 7)$ \\ \midrule
\end{longtable}
\end{theorem}

\begin{remark}
The subgroup $A_2 G_2 < E_6$ is a maximal subgroup and its presence above corrects a small mistake in \cite[Table~1]{MR2044850}. In \textit{loc.~cit.} it is claimed that $N_{E_6}(A_2G_2) = (A_2G_2).2$ with the outer involution acting as a graph automorphism of the $A_2$ factor. This is not possible as the action of $A_2 G_2$ on the $27$-dimensional $E_6$-module $V_{27}$ is not self-dual. Instead, it is the graph automorphism of $E_6$ which induces an outer involution on $A_2 G_2$.
\end{remark}

\section{The connected \texorpdfstring{$G$}{G}-irreducible subgroups} \label{sec:gcrsubs}
In light of \S\ref{sec:gcralg} there are three things we need to determine $\mathrm{ConIrr}(G)$ (Definition~\ref{def:irr-g}) for $G$ a simple exceptional algebraic group. 

\begin{enumerate}[label=(\roman*)]
\item Determine the semisimple maximal connected subgroups of $G$; \label{step:ss-max}

\item Decide whether a candidate subgroup\footnote{Recall that a candidate subgroup is an $M$-irr subgroup from a semisimple maximal connected subgroup $M$.} is $G$-irr; \label{step:cand}

\item Decide whether two $G$-irr candidate subgroups are $G$-conjugate. \label{step:conj}
\end{enumerate}

Since $G$ is a simple exceptional algebraic group, \ref{step:ss-max} is immediate from Theorem \ref{t:maximalexcep}. We consider \ref{step:cand} and \ref{step:conj} in the next two sections. 

\subsection{Testing candidate subgroups}
Let $H$ be an $M$-irr connected subgroup of $G$, where $M$ is maximal semisimple. We need to decide whether or not $H$ is $G$-irr.

\subsubsection{Proving that candidates are $G$-irr}

If a candidate $H$ is in fact contained in a parabolic subgroup $P=QL$ of $G$ then we can consider the image $\pi(H)$ in a Levi subgroup. The action of $H$ and $\pi(H)$ on $G$-modules may differ but their composition factors will always match.\footnote{see \cite[Lemma~3.8]{Tho1} for the precise definition of {\it match}.} Thus one way to prove that $H$ is $G$-irr is to show that its composition factors on some $G$-module do not match those of any proper Levi subgroup of $G$. For instance, every proper Levi subgroup of $G$ has a trivial composition factor on the adjoint module $\Lie(G)$, so a candidate is $G$-irr if it has no trivial composition factors on $\Lie(G)$.

\subsubsection{Proving that candidates are not $G$-irr}

Now let $H$ be a candidate subgroup which we believe is not $G$-irr. If $H$ is $G$-cr but not $G$-irr then $H$ is contained in some proper Levi subgroup $L$ of $G$ by Lemma~\ref{lem:h-to-pi-h}, and thus $C_G(H)$ will contain the non-trivial torus $Z(L)^\circ$. In this case, it is often easy to find a non-trivial torus commuting with $H$ and thus conclude that $H$ is not $G$-irr. 

The most difficult cases are when a candidate $H$ turns out to be non-$G$-cr (and thus not $G$-irr). Such cases are relatively rare: \cite[Corollary~3]{ThomasIrreducible} classifies the non-$G$-cr connected subgroups which are $M$-irr for every (and at least one) reductive maximal connected subgroup $M$ in which they are contained. There are two main methods used in \cite{Tho1,Tho2}. Briefly, one either 
\begin{enumerate}[label=(\arabic*)]
\item directly shows that $H$ is contained in a parabolic subgroup $P$; or \label{method1}
\item finds a non-$G$-cr subgroup $Z \subset P$ and show that $Z$ is contained in $M$ and conjugate to $H$. \label{method2}
\end{enumerate}

One way to implement \ref{method1} is to exhibit a non-zero fixed point of $H$ on the adjoint module $\text{Lie}(G)$. By \cite[Lemma~1.3]{MR1048074}, this places $H$ in either a proper maximal-rank subgroup or a proper parabolic subgroup of $G$, and one can use representation theory to prove that $H$ is not contained in a proper maximal-rank subgroup. Another way is to find a unipotent subgroup of $G$ normalised by $H$, since the Borel--Tits theorem then places $H$ in a proper parabolic subgroup. This is used in \cite[Lemmas~7.9, 7.13]{Tho1}, where calculations in Magma are used to construct an ad-nilpotent subalgebra $S \subset \text{Lie}(G)$ stabilised by a `large enough' finite subgroup $H(q) < H$, where $q = p^r$ for some $r > 0$. It then follows that $H$ also stabilises $S$, and one checks that one can exponentiate $S$ to yield a unipotent subgroup normalised by $H$.   

Method \ref{method2} is implemented in \cite[Lemmas~6.3, 7.4]{Tho1}. Here one starts with a candidate $H$ contained in a semisimple maximal connected subgroup $M$ of maximal rank. One constructs the relevant non-$G$-cr subgroup $Z$ according to the recipe in \S\ref{s:nonGcrstrat}, and then shows that $Z\subset M$. To establish the latter, one proves that any group acting with the same composition factors as $Z$ on the adjoint module of $G$ fixes a non-zero element of $\text{Lie}(G)$. One then shows that $Z$ does not fix any non-zero nilpotent element; thus it fixes a non-zero semisimple element and is contained in a maximal rank subgroup $M'$, again by \cite[Lemma~1.3]{MR1048074}. This part is rather technical and requires the full classification of stabilisers of nilpotent elements and their structure, as found in \cite{MR2883501}. It is however then possible to identify that $M' = M$ and show that $Z$ is conjugate to $H$.

\subsection{\texorpdfstring{$G$-conjugacy}{G-conjugacy}}

Once we know that two isomorphic candidates $H_1$ and $H_2$ are $G$-irr, we must check whether they are $G$-conjugate. One easy test is to check whether their composition factors on various $G$-modules agree. If so then it turns out that, with a single exception, the two candidates are in fact $G$-conjugate. The exception occurs when $G = E_8$, $p \neq 3$ and $H_1$ and $H_2$ are simple of type $A_2$, diagonally embedded in $A_2^2 \subset D_4^2 \subset D_8 \subset E_8$ via $(10,10^{[r]})$ and $(10,01^{[r]})$ respectively, with $r \neq 0$. For more detail see \cite[Corollary~1]{ThomasIrreducible} and its proof. 

If $H_1$ and $H_2$ are not simple then it is usually straightforward to show they are conjugate when they have the same composition factors on $\text{Lie}(G)$, by considering the centraliser of one of the simple factors. 

\begin{example}Let $G = E_8$, let $M_1$ be the maximal-rank subgroup of type $A_1 E_7$ and let $M_2$ be the maximal-rank subgroup of type $D_8$. Take $H_1 = A_1^2 D_6 \subset M_1$ and $H_2 = A_1^2 D_6 \subset D_8 = M_2$. Then $H_1$ and $H_2$ are $G$-irr and $G$-conjugate. Indeed, taking $Y$ to be one of the $A_1$ factors of $H_2$, we have $H_2 \subset Y C_G(Y) = M_1$, by appealing to \cite[p.333, Table~2]{MR1274094}. As $H_1$ is the only subgroup of type $A_1^2 D_6$ contained in $M_1$ up to conjugacy, it is conjugate to $H_2$.
\end{example}

When the candidates are simple this process can be slightly more involved. Often one of the candidates turns out to be the connected centraliser in $M$ or $G$ of an involution, or of an element of order $3$.

\begin{example} Again let $G = E_8$ and $p \neq 2$. Take $H = B_4 \subset A_8$, with $H$ acting irreducibly on the natural $9$-dimensional module for $A_8$. Then $H$ is the centraliser in $G$ of an involution $t$ in the disconnected subgroup $A_8.2$. By calculating the trace of this involution on the adjoint module for $G$ and using \cite[Proposition~1.2]{MR1717629}, we find that $C_G(t) = D_8$ and hence $H \subset D_8$. Similar calculations are carried out in \cite[pp.~56--68]{MR1329942}.
\end{example}     

\subsection{Main results}

We now present results classifying the $G$-cr semisimple subgroups of exceptional algebraic groups. When $p$ is large enough that all subgroups of a given type are $G$-cr, the simple $G$-cr subgroups were classified in \cite{MR1329942,LawTest1}. The $G$-irr subgroups of type $A_1$ were studied for $G$ of exceptional type except $E_8$ in \cite{MR2707891}, and $G$-irr subgroups of $G = G_2, F_4$ are classified in \cite{MR2604850}, \cite{SteF4}, respectively. The reductions in \S\ref{sec:gcralg} now allow us to concentrate on semisimple, $G$-irr subgroups.

\begin{theorem} \label{t:Girr}
Let $G$ be a simple algebraic group of exceptional type and let $H$ be a $G$-irr connected subgroup of $G$. Then $H$ is $\Aut(G)$-conjugate to exactly one subgroup in Tables \cite[\S 11, Tables 1--5]{ThomasIrreducible} and each subgroup in the tables is $G$-irr.
\end{theorem}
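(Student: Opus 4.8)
The plan is to prove the theorem by induction on $\dim G$, following the strategy of \S\ref{sec:gcralg}. The base of the induction is $G = G_2$, where $\mathrm{Irr}(G_2)$ is known from \cite{MR2604850}; the inductive hypothesis is that $\mathrm{Irr}(G')$ has been determined for every simple exceptional $G'$ of strictly smaller dimension, and --- via Proposition~\ref{prop:irredclass} together with the known small-dimensional irreducible modules and their Schur indicators --- for every simple classical group as well. So let $H \subseteq G$ be $G$-irr and connected. By the Borel--Tits theorem $H$ is reductive, and since it cannot centralise a non-central torus it is semisimple with $C_G(H)$ finite. As $H$ lies in no proper parabolic subgroup it is contained in a reductive maximal connected subgroup $M$, which by Theorem~\ref{t:maximalexcep} comes from a short explicit list, and by Borel--Tits again $H$ is $M$-irr.

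First I would, for each such $M$, write down the poset $\mathrm{Irr}(M)$: immediately from Proposition~\ref{prop:irredclass} when $M$ is simple classical, from the inductive hypothesis when $M$ is simple exceptional, and by combining the $\mathrm{Irr}(M_i)$ of the simple factors with the diagonal subgroups (as in Example~\ref{ex:diagonalA1A1}) when $M$ is a proper product. This yields a finite list of \emph{candidates}.

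Next I would run the tests of \S\ref{sec:gcrsubs} on every candidate. To prove a candidate $H$ is $G$-irr, one typically checks that $H$ has no trivial composition factor on $L(G)$: since the projection of $H$ to a Levi factor of any parabolic containing it shares its composition factors, and every semisimple subgroup of a proper Levi has a trivial composition factor on $L(G)$, this forces $H$ into no proper parabolic; in harder cases one compares composition factors on several $G$-modules against those realisable inside each proper Levi. To prove a candidate is \emph{not} $G$-irr one exhibits a non-central torus in $C_G(H)$ when $H$ is $G$-cr, and in the non-$G$-cr case uses one of the two methods of \S\ref{s:nonGcrstrat}: locate a non-zero fixed point of $H$ on $L(G)$ and apply \cite[Lemma~1.3]{MR1048074} (excluding maximal-rank overgroups by representation theory), or locate an $H$-invariant unipotent subgroup by exponentiating an ad-nilpotent subalgebra stabilised by a suitable finite subgroup $H(q)$. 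Finally I would settle $G$- and then $\mathrm{Aut}(G)$-conjugacy among the survivors: matching composition factors on a battery of $G$-modules almost always forces $G$-conjugacy, the sole exception being the diagonal $A_2$-type subgroups of $E_8$ flagged in the excerpt; non-simple $H$ are separated by centralisers of a simple factor, and simple $H$ are frequently identified as connected centralisers of semisimple elements of small order and distinguished by traces on $L(G)$.

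The hard part will be the non-$G$-cr candidates and the conjugacy bookkeeping in $E_7$ and $E_8$. Deciding $G$-irreducibility of a non-$G$-cr candidate genuinely requires the non-abelian cohomology machinery of \S\ref{s:nonGcrstrat}, the classification of nilpotent-element stabilisers from \cite{MR2883501}, and in places explicit Magma computation; and distinguishing $G$-conjugacy from $\mathrm{Aut}(G)$-conjugacy among many similar diagonal subgroups is exactly where errors have historically crept into the literature. The remaining work --- enumerating candidates and applying the cheap $G$-irreducibility criteria --- is lengthy but essentially mechanical given the inductive data and Table~\ref{tab:maximalexcep}.
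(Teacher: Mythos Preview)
Your proposal is correct and follows essentially the same approach as the paper: the survey does not give a self-contained proof but refers to the sequence \cite{Tho1,Tho2,ThomasIrreducible}, and the strategy you outline---induction through the reductive maximal connected subgroups of Theorem~\ref{t:maximalexcep}, building $\mathrm{Irr}(M)$ from Proposition~\ref{prop:irredclass} and diagonal subgroups, then the composition-factor tests and the two methods for disposing of non-$G$-cr candidates---is precisely what \S\ref{sec:gcralg}--\ref{sec:gcrsubs} describe and what Example~\ref{eg:G2irr} illustrates for the base case. One small slip: the ``two methods'' you cite are in \S\ref{sec:gcrsubs}, not \S\ref{s:nonGcrstrat}, and you have conflated the two implementations of method~(1) (fixed point on $L(G)$; exponentiating an ad-nilpotent subalgebra) while omitting method~(2) (constructing a non-$G$-cr $Z$ inside a parabolic and proving it lands in $M$ and is conjugate to $H$), which is genuinely needed in a few cases.
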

This was proved in a sequence of papers \cite{Tho1,Tho2,ThomasIrreducible}. The tables are lengthy and we do not reproduce them here. The last of these papers, specifically \cite[\S 11, Tables 1A--5A]{ThomasIrreducible}, describes the poset structure of $\mathrm{ConIrr}(G)$ and gives a detailed explanation of the tables. The tables also provide the composition factors of each subgroup in $\mathrm{ConIrr}(G)$ on both the minimal and adjoint module.

\begin{example} \label{eg:G2irr}
When $G=G_2$, the reductive maximal connected subgroups of $G$ are: \[A_1 \tilde{A}_1, \ \ A_2, \ \  \tilde{A}_2 \ (p=3), \quad\text{and}\quad  A_1 \ (p \geq 7),\] where $\tilde{A}_1$ and $\tilde{A}_2$ denote subgroups whose roots are short roots of $G$. A group of type $A_1$ has no proper irreducible connected subgroups, so this requires no further consideration. 

Take $M_1 = A_1 \tilde{A}_1$. Since the factors have the same type, there are diagonal $M$-irr connected subgroups of type $A_1$. As in Example~\ref{ex:diagonalA1A1}, these are determined by non-negative integers $r,s$ with $rs=0$ and we write $H^{r,s}$ for such a subgroup. So $\mathrm{ConIrr}(M_1) = \{ H^{r,s}\mid rs=0\}$. We now need to decide whether the members of $\mathrm{ConIrr}(M_1)$ are $G$-irr. If $(r,s)\neq(0,0)$ or $p > 3$ then $H^{r,s}$ acts on $\Lie(G)$ without trivial composition factors, hence it is $G$-irr. When $p=3$, the composition factors of $H^{0,0}$ on $\text{Lie}(G)$ do not match those of a Levi subgroup and so $H^{0,0}$ is also $G$-irr. When $p=2$, however, $H^{0,0}$ is contained in an $A_1$-parabolic subgroup and is non-$G$-cr (appearing in Theorem~\ref{t:nonGcrG2}). To see that $X := H^{0,0}$ is contained in a proper parabolic subgroup it suffices to demonstrate that $X$ stabilises a $1$-space on the irreducible $6$-dimensional module $L_{G}(10)$, since by \cite[Theorem~B]{MR1316858}, $G$ is transitive on $1$-spaces of $L_{G}(10)$ and the stabiliser of such a $1$-space is a long root parabolic subgroup. It remains to show that $X$ is not conjugate to $\D(L)$ for some Levi subgroup $L$ (up to conjugacy, these are the two simple factors of $M$). This can be done by calculating the action of $X$ on $L_G(10)$, which is $T(2) + 2$ and then comparing it with the actions of the two subgroups $\D(L)$ on $L_{G}(10)$, which are $1^2 + 0^2$ and $1^2 + 2$, respectively. 

Now let $M_2 = A_2$. Applying Proposition~\ref{prop:irredclass}, we find a single candidate, $H$, which has type $A_1$ with $p \neq 2$, and this acts irreducibly on the adjoint $3$-dimensional module $L(2)$. We must check that $H$ is $G$-irr. In fact, one can show $H$ is conjugate to $H^{0,0}$ and thus $G$-irr. To see this, note that $N_{G}(M_2) = M_2\langle t\rangle$, with $t$ an involution inducing a graph automorphism on $M_2$ \cite[Table~4.3.1]{MR1490581}. As $L(2)$ is self-dual, one concludes that $H$ centralises $t$ and hence $H\subset C_G(t)=M_1$. The only subgroups of type $A_1$ in $M$ are its two simple factors and the subgroups $H^{r,s}$. Since $p \neq 2$, the composition factors of the action of these subgroups on $V_G(10)$ distinguish them and we conclude that $H$ is conjugate to $H^{0,0}$.

The same method applies to $\tilde{A}_2$ when $p = 3$ and one finds a single candidate subgroup $H$ of type $A_1$, which turns out to be $G$-irr and conjugate to $H^{1,0}$.

We present this classification in Figure \ref{picg2subs}, with a straight line depicting containment. This gives a small flavour of the additional information in \cite{ThomasIrreducible}. 
\begin{figure}[htbp]
\centering
\begin{tikzpicture}[yscale=0.95]
\node at (0,0) {};
\node at (\textwidth,0) {};
\node at (0,-4) {};
\node at (\textwidth,-4) {};
\draw  (0.5\textwidth,0.7) node[anchor=north,align=center]{$G_2$}; %$G_2 = G_2(\#\gtwo{0})$
\draw (0.1\textwidth,-2) node[align=center]{$A_2$ \\ ${}$};
\draw (0.315\textwidth,-2) node[anchor=west,align=center]{$A_1 \tilde{A}_1$ \\ ${}$};
\draw (0.63\textwidth,-2) node[anchor=west,align=center]{$\tilde{A}_2 $ \\ $(p=3)$};
\draw (0.87\textwidth,-2) node[anchor=west,align=center]{$A_1$ \\ $(p \geq 7)$};
\draw (0.1\textwidth,-4.5) node[align=center]{$A_1 = H^{0,0}$ \\ $(p\neq2)$};
\draw (0.63\textwidth,-4.5) node[align=center,anchor=west]{$A_1 = H^{1,0}$ \\ $(p = 3)$};
\draw (0.2\textwidth,-4.8) node[align=center,anchor=west]{$A_1 = H^{r,s}$ \\ ($(r,s) \neq (0,0)$; \\ if $p=3$ then $(r,s) \neq (1,0)$)};
\draw (0.1\textwidth,-1.4)--(0.5\textwidth,0);
\draw (0.35\textwidth,-1.4)--(0.5\textwidth,0);
\draw (0.65\textwidth,-1.4)--(0.5\textwidth,0);
\draw (0.9\textwidth,-1.4)--(0.5\textwidth,0);
\draw (0.1\textwidth,-2.2)--(0.1\textwidth,-3.9);
\draw (0.35\textwidth,-2.2)--(0.1\textwidth,-3.9);
\draw (0.35\textwidth,-2.2)--(0.35\textwidth,-3.9);
\draw (0.35\textwidth,-2.2)--(0.67\textwidth,-3.9);
\draw (0.67\textwidth,-2.5)--(0.67\textwidth,-3.9);
\end{tikzpicture}
\caption{The poset of $G_2$-irr connected subgroups. \label{picg2subs}} 
\end{figure}
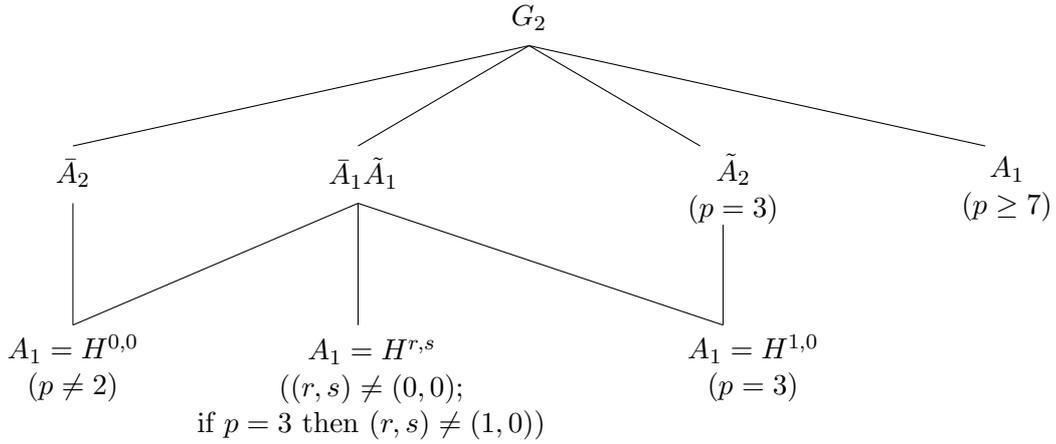
\end{example}

\begin{remark}\leavevmode
Further information about the $G$-reducible semisimple $G$-cr subgroups can be found in \cite{Litterick2018}. In particular, the semisimple $G$-reducible $G$-cr connected subgroups of $G=F_4$ are classified and written down explicitly in \cite[\S 6.4]{Litterick2018a}, correcting \cite[Corollary~5]{SteF4}. For each reducible $G$-cr subgroup $H$, the socle series of the action of $H$ on $\text{Lie}(G)$ is computed together with the centraliser $C_G(H)^\circ$ (which is another $G$-cr subgroup by Theorem~\ref{thm:BMRClifford}). 
\end{remark}

\section{Non-\texorpdfstring{$G$-}{}completely reducible subgroups} \label{s:nonGcr}

The method in \S\ref{s:nonGcrstrat} has been applied to classify the non-$G$-cr semisimple subgroups of exceptional algebraic groups $G$ in many cases. The following is a more precise version of \cite[Theorem~1]{MR1329942}. It generalises Theorem \ref{thm:gcr-red} for exceptional groups by ruling out certain non-$G$-cr subgroups of certain types even when $p \leq \text{rank}(G)$.  

\begin{theorem} \label{t:subtypes}
Let $G$ be a simple algebraic group of exceptional type in characteristic $p$. Let $X$ be an irreducible root system and let $N(X,G)$ be the set of primes defined by Table~\ref{tab:newimrn}, e.g.~$N(B_2,E_8)=\{2,5\}$.

If $H$ is a connected reductive subgroup of $G$ and $p \notin N(X,G)$ whenever $H$ has a simple factor of type $X$, then $H$ is $G$-cr. Conversely, whenever $p \in N(X,G)$, there exists a non-$G$-cr simple subgroup $H$ of type $X$.

\end{theorem}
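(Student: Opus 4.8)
\textbf{Proof proposal for Theorem~\ref{t:subtypes}.}

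The plan is to prove the two halves separately, with the bulk of the work going into the converse (existence) statement; the forward direction will follow fairly formally from the tools already assembled in Part~I.

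\emph{The forward direction.} Suppose $H$ is connected reductive with no simple factor of type $X$ for which $p \in N(X,G)$. By Corollary~\ref{cor:linRedCent} we may replace $H$ by its semisimple derived subgroup $\D(H)$, so that $H$ is semisimple, and by Theorem~\ref{thm:BMRClifford}\ref{gcr-props-i} together with the direct-product behaviour of $G$-complete reducibility it suffices to treat each simple factor of $H$ in turn; so we may assume $H$ is simple of type $X$. Now argue by induction on $\dim G$. If $H$ is not $G$-cr then, as in \S\ref{s:nonGcrstrat}, $H$ lies in a parabolic $P = QL$ minimal subject to containing $H$, and $\bar H = \pi(H)$ is $L$-irr hence $G$-cr. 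One then wants to say: either $\bar H$ is conjugate into a smaller-rank subsystem subgroup where induction applies, or the relevant cohomology group $\opH^1(\bar H, Q)$ --- built up as in \S\ref{ss:cohom} from $\opH^1(\bar H, Q_i/Q_{i+1})$ --- vanishes, forcing $H$ to be $Q$-conjugate to $\bar H$ and hence $G$-cr, a contradiction. The point is that the non-vanishing of these cohomology groups is governed by congruence conditions on $p$ relative to the highest weights appearing in the Weyl modules $Q_i/Q_{i+1}$, and these are exactly the primes recorded in Table~\ref{tab:newimrn}; outside that set every obstruction vanishes. This is essentially the content of \cite[Thm.~1]{MR1329942} refined by the subsequent work of the third author and collaborators in \cite{Tho1,Tho2,ThomasIrreducible}, and I would invoke those computations rather than redo them.

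\emph{The converse direction.} Here one must exhibit, for each pair $(X,G)$ with $p \in N(X,G)$, an actual non-$G$-cr simple subgroup of type $X$. The strategy is the constructive one of \S\ref{s:nonGcrstrat}: choose a parabolic $P = QL$ of $G$ and an $L$-irr subgroup $\bar H \cong X$ of $\D(L)$ such that $\opH^1(\bar H, Q) \neq 0$; then a non-trivial cocycle class gives a complement $H$ to $Q$ in $Q\bar H$ which is $Q$-conjugate to $\bar H$ if and only if the class is trivial, so any non-trivial class yields a genuinely non-$G$-cr subgroup $H \cong X$. Concretely one does this by: (i) picking the Levi $L$ so that a suitable irreducible $\bar H$-module $L(\lambda)$ arises among the layers $Q_i/Q_{i+1}$; (ii) checking $\opH^1(\bar H, L(\lambda)) \neq 0$ for the relevant $p$ --- this is where the primes in Table~\ref{tab:newimrn} enter, via the known computations of $1$-cohomology of irreducible modules for groups of small rank; and (iii) verifying that the obstruction and fusion maps $\Delta_{\bar H}$ and $\delta_{\bar H}$ in the exact sequence \eqref{eq:long} do not conspire to kill the class when one passes from the single layer up to all of $Q$. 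In the exceptional low-rank cases ($p=2$ for type $B_2$ in $F_4$, $p=2,5$ for $B_2$ in $E_8$, the $G_2$ case, and so on) one can simply point to the explicit subgroups already constructed and tabulated in \cite{Tho1,Tho2,ThomasIrreducible,SteF4}, and to Example~\ref{eg:G2irr} above for the $G_2$ phenomenon; these furnish the required witnesses.

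\emph{The main obstacle.} The genuinely delicate point is step~(iii) of the converse: non-vanishing of $\opH^1(\bar H, Q_i/Q_{i+1})$ for a single layer does not immediately give non-vanishing of $\opH^1(\bar H, Q)$, because a class in a lower layer can be obstructed from lifting (the map $\Delta_{\bar H}$ to $\opH^2(\bar H, R)$) or two classes can fuse under conjugation by $S^{\bar H}$ (the map $\delta_{\bar H}$). Controlling these maps requires the explicit cocycle computations of the type carried out in \cite{SteF4}, and there is no shortcut visible --- one must either track the maps through the filtration by hand or appeal to the case-by-case verifications in the literature. Dually, on the forward side the analogous difficulty is ensuring that when $p \notin N(X,G)$ \emph{every} such obstruction vanishes simultaneously, which again rests on the full case analysis underpinning Table~\ref{tab:newimrn} rather than on any single clean lemma. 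Accordingly I would present the theorem as a consolidation of \cite{MR1329942,Tho1,Tho2,ThomasIrreducible}, giving the uniform cohomological argument in outline and citing those sources for the residual finite check.
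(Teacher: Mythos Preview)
Your forward-direction reduction to simple $H$ does not work as written. Theorem~\ref{thm:BMRClifford}\ref{gcr-props-i} goes the \emph{wrong way}: it says that if $H$ is $G$-cr then every normal subgroup of $H$ is $G$-cr, so its contrapositive lets you conclude that $H$ is \emph{non}-$G$-cr from a non-$G$-cr simple factor, not that $H$ is $G$-cr from $G$-cr simple factors. To pass from ``each simple factor is $G$-cr'' to ``their product is $G$-cr'' you would need Theorem~\ref{thm:commuting}, but that carries the hypothesis $p > 3$ (or $p = 3$ and $G$ of classical type), which is not available here. For a concrete failure: the theorem must cover a semisimple $H$ of type $A_2A_2$ inside $G = F_4$ at $p = 2$, since $N(A_2,F_4) = \{3\}$, and neither Clifford nor Theorem~\ref{thm:commuting} lets you reduce this to the simple $A_2$ case.

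The paper's argument (following \cite{MR1329942}) does not reduce to simple factors at the level of subgroups at all. One works directly with the $L$-irr image $\bar H$, which may well be a product, and shows $\opH^1(\bar H, Q_i/Q_{i+1}) = 0$ for every level of every parabolic. The composition factors of these levels restricted to $\bar H$ are tensor products of irreducibles for the simple factors of $\bar H$, and a K\"unneth argument reduces the vanishing of $\opH^1$ on such a tensor product to the vanishing of $\opH^1$ of each simple factor on its tensorand --- which is the step at which Table~\ref{tab:newimrn} actually enters. In other words, the reduction to simple pieces happens inside the cohomology calculation, not via the $G$-cr formalism.

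A smaller point on the converse: the explicit non-$G$-cr witnesses you need are constructed in \cite{MR2604850}, \cite{SteF4}, \cite{Stewart21072013}, \cite{Litterick2018} and \cite{GanTho}; the papers \cite{Tho1,Tho2,ThomasIrreducible} you cite classify $G$-\emph{irreducible} subgroups, which is the complementary problem. Your description of the method --- exhibit $P = QL$, an $L$-irr $\bar H$ of type $X$, and a class in $\opH^1(\bar H,Q)$ that survives the obstruction and fusion maps in \eqref{eq:long} --- is indeed what the paper does, and your identification of step~(iii) as the genuinely delicate point is accurate; Example~\ref{ex:G2inF4E6} in the paper is precisely an instance where a naive reading of a single level gives the wrong answer.
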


\begin{table}[htbp] \centering
\begin{tabular}{r|rrrrr}
&$G=E_8$&$E_7$&$E_6$ & $F_4$ & $G_2$\\ \hline
$X=A_1$ & $\leq 7$ & $\leq 7$ & $\leq 5$ & $\leq 3$ & $2$ \rule{0pt}{2.4ex}\\
$A_2$ & $\leq 3$ & $\leq 3$ & $\leq 3$ & $3$\\
$B_2$ & $5\ 2$ & $2$ & $2$ & $2$\\
$G_2$ & $7\ 3\ 2$ & $7\ 2$ & & \\
$A_3$ & $2$ & $2$ & & \\
$B_3$ & $2$ & $2$ & $2$ & $2$\\
$C_3$ & $3$ & & & \\
$B_4$ & $2$ & & \\
$C_4, D_4$ & $2$ & $2$ & 
\end{tabular}
\caption{Values of $N(X,G)$.} \label{tab:newimrn}
\end{table}

\begin{remark} This corrects \cite[Theorem~1]{Stewart21072013}, which had claimed the existence of non-$G$-cr subgroups $H$ of type $G_2$ when $p=2$ and $G$ is of type $F_4$ or $E_6$. We discuss this further in Example \ref{ex:G2inF4E6}.
\end{remark}

The result ``$p \not \in N(G,X) \Rightarrow$ all simple subgroups of type $X$ are $G$-cr'' is largely proven in \cite{MR1329942}. The strategy is to show that for each parabolic subgroup $P = QL$ and each $L$-irr subgroup $\bar{X} < L$, the levels of $Q$ restricted to $\bar{X}$ have trivial $1$-cohomology. In \emph{loc.~cit.} and \cite{SteF4} this is accomplished by determining all modules $V$ with $\opH^1(\bar{X},V) \neq 0$ such that $\dim V$ is small enough for $V$ to potentially appear as an $\D(L)$-composition factor in the filtration of $Q$. For $G$ of type $E_8$, the largest dimension of such a $\D(L)$-composition factor is $64$, occurring when $\D(L)$ is of type $D_7$, which makes this a tractable problem. (See \cite[Lemma~5.1]{Stewart21072013} for more information on the upper bounds on $\dim V$.) 

\begin{example}
We use results from \S\ref{sec:BMRresults} to exhibit a non-$G$-cr subgroup of type $A_1$ in $G = E_6$ when $p = 5$. Here, $G$ has a Levi subgroup $L$ with $\D(L)$ of type $A_5$. It follows from Theorem \ref{thm:BMRcentraliser} that any non-$L$-cr subgroup is non-$G$-cr. So it suffices to find a non-$L$-cr subgroup of type $A_1$, which is equivalent to finding an indecomposable, reducible $6$-dimensional module for $\text{SL}_2$. To that end, if $V\cong L(1)$ is the natural module for $\text{SL}_2$ then the $p$-th symmetric power $S^p(V)$ is indecomposable of dimension $p+1$ with two composition factors, $L(p)$ and $L(0)$.
\end{example}

The following is a delicate case of the proof that every simple subgroup of type $X$ is $G$-cr when $p \not\in N(X,G)$; this corrects an error in \cite{SteF4}.

\begin{example} \label{ex:G2inF4E6}
Let $G = F_4$ and $p = 2$. We show that every subgroup of type $G_2$ is $G$-cr, and note that a similar but easier argument applies for $G = E_6$. The only proper Levi subgroups with a subgroup of type $G_2$ are those with derived subgroup $B_3$ or $C_3$. Write $P = QL$ where $\D(L) = B_3$ and let $H$ be the $L$-irr subgroup of type $G_2$. In \cite[Lemma~4.4.3]{SteF4} it is claimed that $\opH^1(G_2,Q)$ is $1$-dimensional, which in turn relies on the claim that an element of $(Q/Q_2)^H$ lifts to an element of $Q^H$. However, this latter claim is false. To see this, note that $Q^H \subset C := C_G(H)^\circ$. Since $H$ is $G$-cr, so is the subgroup $C$ by Theorem \ref{thm:BMRClifford}~\ref{gcr-props-ii}, hence this is reductive and moreover has rank $1$ since $Z(L)$ is $1$-dimensional, cf.~Remark~\ref{rem:minLevis}.

By the Borel--de-Siebenthal algorithm, $H\subset \D(L)$ centralises the subgroup $\tilde A_1$ of $G$. Thus $C$ has type $A_1$, and has no $2$-dimensional unipotent subgroup, so $\dim Q^H \le 1$. By restricting the $\D(L)$-action on $Q_2$ to $H$, we find that $Q_2 \downarrow H \cong V(1,0) = L(1,0)/L(0,0)$, where $V(1,0)$ is the Weyl module of high weight $(1,0)$ with the trivial module $L(0,0)$ in its socle. Thus $Q_2^H\cong k$. As $Q_2^H\subseteq Q^H$, we have equality by comparing dimensions. Therefore, no non-trivial element of $(Q/Q_2)^H$ lifts to an element of $Q^H$. From the exact sequence \eqref{eq:long}, it then follows that the map $\opH^1(H,Q_2) \to \opH^1(H,Q)$, which is surjective since $\opH^1(H,Q/Q_2) = 0$, is the zero map. Thus $\opH^1(H,Q)=0$, so $H$ gives rise to no non-$G$-cr subgroups in $P$. Applying the graph morphism of $G$ now allows one to conclude that the $C_3$-parabolic also contains no non-$G$-cr subgroups of type $G_2$.
\end{example}

Moving on, let us assume $p\in N(X,G)$. Then there exist non-$G$-cr semisimple subgroups with a factor of type $X$, and we wish to classify them all. The case $G = G_2$ was settled by the second author in \cite{MR2604850}. Since the classification is unusually short, we present it here. Recall that $G_2$ has semisimple maximal connected subgroups of type $A_1 \tilde{A}_1$ and $A_2$ (see Theorem~\ref{t:maximalexcep}).

\begin{theorem} \label{t:nonGcrG2}
Let $G$ be a simple algebraic group of type $G_2$ in characteristic $p$, and let $H$ be a non-$G$-cr semisimple subgroup. Then $p=2$, $H$ has type $A_1$ and is $G$-conjugate to precisely one of $Z_1$ and $Z_2$ below.
\begin{enumerate}[label=\normalfont(\roman*)]
\item $Z_1 \subset A_1\tilde{A}_1$ embedded diagonally via $x\mapsto (x,x)$;
\item $Z_2 \subset A_2$ embedded via $V(2)\cong L(2)/L(0)$.
\end{enumerate}
\end{theorem}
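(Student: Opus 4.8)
The plan is to exhaust the semisimple subgroups $H$ of $G = G_2$ that fail to be $G$-cr by running through the strategy of \S\ref{s:nonGcrstrat}. First, by Theorem~\ref{t:subtypes} (reading off the $G_2$ column of Table~\ref{tab:newimrn}), a non-$G$-cr simple subgroup of $G_2$ can only exist when $p = 2$ and the subgroup has type $A_1$; since a semisimple group is a commuting product of simple factors and $G_2$ has rank $2$, a non-$G$-cr semisimple $H$ must in fact itself be simple of type $A_1$ and $p = 2$. (One could also argue this directly: a rank-$2$ semisimple $H$ would be $G$-irr by dimension/root-system considerations, hence $G$-cr, and if $H$ has a torus factor Corollary~\ref{cor:linRedCent} reduces to the derived subgroup.) So from now on $p=2$ and $H \cong A_1$.

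Next I would apply Lemma~\ref{lem:h-to-pi-h}: choose $P = QL$ minimal among parabolics containing $H$, and let $\bar H = \pi(H)$ be its image in $L$, which is $L$-irr, hence $G$-cr, and not $Q$-conjugate to $H$. Since $G_2$ has semisimple rank $2$, the proper parabolics have $\D(L)$ of type $A_1$ (a long-root or short-root $A_1$; these are the two classes), or $L$ a maximal torus (Borel), which contains no nontrivial reductive subgroups. So $\bar H$ is either all of $\D(L) \cong A_1$ or a proper $A_1$ inside it; but $A_1$ has no proper connected subgroups of positive dimension other than tori, so $\bar H = \D(L)$, a long-root or short-root $A_1$-Levi. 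For each of the two parabolics we then compute $\opH^1(\bar H, Q)$ via the level filtration of $Q$ as in \S\ref{ss:cohom}. Using the root-system data for $G_2$ (as in the $G_2$ example preceding \eqref{eq:long}) one reads off the levels of $Q$ and the induced $\D(L)$-modules $Q_i/Q_{i+1}$; restricting to $\bar H = \D(L)$ and invoking known $\opH^1$ of $A_1$-modules in characteristic $2$ (e.g.\ $\opH^1(A_1, L(1)) = 0$ but cohomology can appear via $L(2)$-type factors, and one must watch for the exceptional-isogeny phenomenon of \S\ref{bn_cn} since $p=2$), one finds in each case that $\opH^1(\bar H, Q) $ is at most $1$-dimensional, yielding at most one non-$G$-cr complement up to $Q$-conjugacy. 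Here the main obstacle is exactly the subtlety flagged in Example~\ref{ex:G2inF4E6}: the naive prototype $\bigoplus \opH^1(\bar H, Q_i/Q_{i+1})$ may overcount, because a class in $\opH^1(\bar H, Q/Q_2)$ need not lift to $\opH^1(\bar H, Q)$ (question \ref{q2}) or may be killed by $Q$-conjugacy from $S^{\bar H}$ (question \ref{q1}); one must chase the exact sequence \eqref{eq:long} carefully, and the cleanest way to pin down the surviving class is to bound $\dim Q^{\bar H}$ using $C := C_G(\bar H)^\circ$, which is $G$-cr of rank $\le 1$ by Theorems~\ref{thm:BMRClifford} and \ref{thm:gcr-strong-red}.

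Finally I would identify the one surviving class for each parabolic with the subgroups $Z_1$ and $Z_2$ in the statement, and check these really are non-$G$-cr and mutually non-conjugate. For $Z_1 \subset A_1 \tilde A_1$ via $x \mapsto (x,x)$: one shows $Z_1$ lies in the long-root $A_1$-parabolic and equals the non-trivial complement there — concretely, as in Example~\ref{eg:G2irr}, it suffices to exhibit a $1$-space of the $6$-dimensional module $L_G(10)$ stabilised by $Z_1$ (using \cite[Thm.~B]{MR1316858}, $G$ is transitive on $1$-spaces with long-root-parabolic stabiliser), and to note that $Z_1$ is not conjugate into a Levi since any $A_1$-Levi acts on $L_G(10)$ with different composition factors. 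For $Z_2 \subset A_2$ embedded via $V(2) \cong L(2)/L(0)$ (the natural $3$-dimensional module for $A_2 = \SL_3$ in characteristic $2$, which is reducible-indecomposable): since $\bar A_2$ is a maximal-rank subgroup and, at $p=2$, its natural module supports the Frobenius-twist relations, this $Z_2$ maps onto the $\tilde A_1$-Levi with a non-trivial cocycle and again fixes a $1$-space on $L_G(10)$, so is non-$G$-cr. That $Z_1$ and $Z_2$ are not $G$-conjugate follows by comparing composition factors on $L_G(10)$ (or $L(G)$): they arise from non-conjugate minimal parabolics (one long-root, one short-root), and a $G$-conjugacy would force $\bar H$ to be conjugate across the two classes, which is impossible. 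Assembling these gives exactly the two classes claimed, completing the proof.
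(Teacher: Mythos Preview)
The paper does not itself prove Theorem~\ref{t:nonGcrG2}; it simply records the statement and cites \cite{MR2604850}. So there is no in-paper argument to compare against directly. Your overall plan---reduce to $p=2$ and $H$ of type $A_1$ via Theorem~\ref{t:subtypes}, then run the cohomological machinery of \S\ref{s:nonGcrstrat} over the two maximal parabolics---is the natural one and is presumably what underlies the cited reference.

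There is, however, a genuine gap in your sketch. You assert that for each of the two parabolics one finds $\opH^1(\bar H,Q)$ ``at most $1$-dimensional'', and that this produces the non-$G$-cr class. In fact the abelian prototype vanishes in both cases. For the parabolic with long-root Levi (root $\alpha_2$) the levels of $Q$ restrict to $\bar H=\D(L)\cong A_1$ as $L(1)$, $L(0)$, $L(1)$ (this is precisely the worked example in \S\ref{ss:cohom}); for the short-root-Levi parabolic (root $\alpha_1$) they are $L(3)$ and $L(0)$. In characteristic~$2$ one has $\opH^1(A_1,L(0))=\opH^1(A_1,L(1))=\opH^1(A_1,L(3))=0$ (the latter two are Steinberg modules), so $\bigoplus_i \opH^1(\bar H,Q_i/Q_{i+1})=0$ and hence $\opH^1(\bar H,Q)=0$ for both parabolics. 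Thus \emph{no} non-$G$-cr subgroup arises as a scheme-theoretic complement to $Q$ in $Q\bar H$: every such complement is $Q$-conjugate to $\bar H$.

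The subgroups $Z_1$ and $Z_2$ instead arise entirely from the ``very special situation'' of p.~\pageref{bn_cn}: the restriction $\pi|_{Z_i}\colon Z_i\to\bar{Z_i}$ is not an isomorphism of algebraic groups but has nontrivial infinitesimal kernel. For $Z_2$, the image in the $A_1$-Levi acts on the quotient $V(2)/L(0)\cong L(2)=L(1)^{[1]}$, so $\pi|_{Z_2}$ is a Frobenius-type isogeny. You mention the exceptional-isogeny phenomenon as something to ``watch for'', but your write-up treats it as a side issue rather than the sole mechanism producing $Z_1$ and $Z_2$. To complete the argument you must classify, for each parabolic, those $A_1$-subgroups $H\subset P$ with $\pi(H)=\D(L)$ and $H\cap Q$ nontrivial infinitesimal, and show this yields exactly the two stated classes; that is where the real work lies. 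Your assertion that $Z_1$ and $Z_2$ are distinguished by lying in different minimal parabolics should also be revisited once the correct mechanism is in hand.
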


This theorem exhibits an interesting feature: every non-$G$-cr subgroup has a proper reductive overgroup in $G$. This is not true in general, and we consider semisimple subgroups with no proper reductive overgroups further in \S\ref{sec:noredovergps}.

For the next result, recall that a prime $p$ is called \emph{good for $G$} if $G$ has type $G_2, F_4, E_6, E_7$ and $p > 3$, or if $G$ has type $E_8$ and $p > 5$; otherwise $p$ is called \emph{bad} for $G$. The non-$G$-cr semisimple subgroups of $F_4$ were extensively studied by the second author in \cite{SteF4}; however the paper contains a number of errors. These are systemically dealt with in \cite{GanTho}, from which we distil a headline result, presented together with the classification of non-$G$-cr semisimple subgroups in good characteristic due to the first and third authors \cite{Litterick2018}. 

\begin{theorem} \label{t:nonGcrF4E8}
Let $G$ be a simple algebraic group of exceptional type in characteristic $p > 0$ and let $H$ be a non-$G$-cr semisimple subgroup of $G$. Then one of the following holds: 
\begin{enumerate}[label=\normalfont(\roman*)]
\item $(G,p) = (G_2,2)$ and $H$ is a subgroup $Z_1$ or $Z_2$ from Theorem~\ref{t:nonGcrG2};
\item $(G,p) = (F_4,3)$ and $H$ has type $A_2$, $A_1A_1$ or ${}^\star A_1$; 
\item $(G,p) = (F_4,2)$ and $H$ has type $B_3$, $B_2$, ${}^\star A_1 B_2$, ${}^\star A_1 A_2$ or ${}^\star A_1^n$ with $n \leq 3$;
\item $(G,p) = (E_6,5)$ and $H$ has type $A_1^2$ or $A_1$;
\item $(G,p) = (E_7,5)$ and $H$ has type $A_2A_1$, $A_1^2$ or $A_1$;
\item $(G,p) = (E_7,7)$ and $H$ has type $G_2$ or $A_1$;
\item $(G,p) = (E_8,7)$ and $H$ has type $A_1 G_2$, $A_1^2$ or $A_1$;
\item $G$ has type $E_6$, $E_7$ or $E_8$ and $p$ is bad for $G$. \label{case-eight}
\end{enumerate} 
Conversely, there is a non-$G$-cr semisimple subgroup of each type listed and infinitely many conjugacy classes for those marked ${}^\star$. 
\end{theorem}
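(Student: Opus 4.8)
We combine the reduction theorems of Part~I with the cohomological strategy of \S\ref{s:nonGcrstrat}, first cutting the problem down to a finite list of pairs $(G,p)$ and then, in each, running (or quoting the already-completed run of) the non-abelian cohomology computation that produces and sorts the non-$G$-cr complements.

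\textbf{Step 1: reductions and the partition into cases.} By Corollary~\ref{cor:linRedCent} a connected reductive subgroup is $G$-cr if and only if its derived subgroup is, so we may assume $H$ is semisimple. By Theorem~\ref{t:subtypes} a non-$G$-cr $H$ must have a simple factor of type $X$ with $p\in N(X,G)$; reading Table~\ref{tab:newimrn} column by column leaves exactly $p=2$ for $G_2$; $p\in\{2,3\}$ for $F_4$; $p\in\{2,3,5\}$ for $E_6$; and $p\in\{2,3,5,7\}$ for each of $E_7$ and $E_8$. Splitting off those pairs with $p$ bad for $E_6$, $E_7$ or $E_8$ yields case~\ref{case-eight}, and the pairs that remain are precisely $(G_2,2)$, $(F_4,2)$, $(F_4,3)$, $(E_6,5)$, $(E_7,5)$, $(E_7,7)$, $(E_8,7)$, the $(G,p)$ of cases (i)--(vii). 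When $p\geq 5$ one also has, by Theorem~\ref{thm:commuting}, that a commuting product of $G$-cr subgroups is $G$-cr; hence a non-$G$-cr semisimple $H$ then contains a non-$G$-cr \emph{simple} factor, which is what makes the good-characteristic cases (iv)--(vii) more uniform than the small-prime ones.

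\textbf{Step 2: the forward direction in each case.} Fix one of the pairs above and let $H$ be non-$G$-cr semisimple. Choose a parabolic $P=QL$ minimal over $H$; then $\bar H:=\pi(H)$ is $L$-irr, hence $G$-cr, and by Lemma~\ref{lem:h-to-pi-h} the subgroup $H$ is a complement to $Q$ in $Q\bar H$ whose cocycle class in $\opH^1(\bar H,Q)$ is non-trivial. The candidates for $\bar H$ are the $L$-irr subgroups of the Levi subgroups of $G$, known from Proposition~\ref{prop:irredclass} for classical Levi factors, from Theorem~\ref{t:Girr} (and induction on rank) for exceptional ones, together with diagonal subgroups. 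Restricting the level filtration $Q=Q_0\supseteq Q_1\supseteq\cdots$ of \S\ref{ss:cohom} to $\bar H$, computing each $\opH^1(\bar H,Q_i/Q_{i+1})$ from the cohomology of the Weyl modules for $\D(L)$ restricted to $\bar H$, and assembling these via the long exact sequence~\eqref{eq:long}, one determines for which $\bar H$ one has $\opH^1(\bar H,Q)\neq 0$ and hence which isomorphism types of $H$ occur. This computation, together with the $G$-conjugacy bookkeeping, is carried out in \cite{MR2604850} for $G_2$ (giving Theorem~\ref{t:nonGcrG2}, i.e.\ case~(i)); in \cite{SteF4}, corrected and completed in \cite{GanTho}, for $F_4$ (cases (ii)--(iii)); and by the first and third authors for $E_6$, $E_7$, $E_8$ in good characteristic (cases (iv)--(vii)). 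For case~\ref{case-eight} no complete list is claimed: the assertion is only that $(G,p)$ is a bad-prime pair for $E_6$, $E_7$ or $E_8$, which is Step~1.

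\textbf{Step 3: the converse, and the main obstacle.} A non-$G$-cr subgroup of each listed type drops out of the same computation: whenever $\opH^1(\bar H,Q)\neq 0$ for an admissible $L$-irr $\bar H$, any non-trivial class yields by Lemma~\ref{lem:h-to-pi-h} a non-$G$-cr complement $H\subseteq Q\bar H$; for the bad-prime $E_6$, $E_7$, $E_8$ families this is already furnished by the ``conversely'' clause of Theorem~\ref{t:subtypes}. For the types marked ${}^\star$, the relevant $\opH^1(\bar H,Q)$ is positive-dimensional over the infinite field $k$ and the conjugation action on it by $N_G(\bar H)$ and by tori centralising $\bar H$ has infinitely many orbits, producing infinitely many $G$-classes; when $\opH^1(\bar H,Q)$ is one-dimensional with the scalar action realised by conjugation, all non-trivial classes fuse and one obtains finitely many classes. \emph{The hard part} is Step~2 at the small primes, where $Q$ is typically non-abelian, so $\opH^1(\bar H,Q)$ is merely a pointed set and one must decide whether the connecting maps $\delta_{\bar H}$, $\Delta_{\bar H}$ of~\eqref{eq:long} vanish --- questions~\ref{q1} and~\ref{q2} of \S\ref{ss:cohom} --- which is invisible at the level of dimensions and forces explicit, frequently computer-assisted, work with cocycles. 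This is precisely where \cite{SteF4} erred (see Example~\ref{ex:G2inF4E6}: an $H$-fixed point of $Q/Q_2$ there fails to lift to $Q^H$), and repairing it is the substance of \cite{GanTho}; a secondary difficulty, most acute in the bad-prime $E_8$ cases, is proving that the complements produced are pairwise non-$G$-conjugate and that the ${}^\star$ families genuinely persist as infinite families.
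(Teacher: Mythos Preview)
Your proposal is correct and matches the paper's approach exactly: this is a survey article and the theorem is not proved in the paper itself but assembled from external references---\cite{MR2604850} for $G_2$, \cite{SteF4} as corrected by \cite{GanTho} for $F_4$, and \cite{Litterick2018} (the first and third authors) for $E_6$, $E_7$, $E_8$ in good characteristic---with the bad-prime $E$-type cases left as ongoing work. Your Step~1 reduction via Theorem~\ref{t:subtypes} and your description of the cohomological machinery underlying those references are accurate; the only superfluous move is invoking Corollary~\ref{cor:linRedCent}, since $H$ is already assumed semisimple in the statement.
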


Case \ref{case-eight} remains the subject of ongoing work of the first and third authors. 

\begin{remark}\leavevmode
When $p$ is good for $G$, a non-$G$-cr semisimple subgroup is $G$-conjugate to precisely one subgroup in \cite[Tables~11--17]{Litterick2018} and conversely, each subgroup in those tables is non-$G$-cr. Furthermore, \cite{Litterick2018} also provides the connected centraliser of each subgroup and the action on minimal and adjoint modules. 
\end{remark}

\subsection{Semisimple subgroups with no proper reductive overgroups} \label{sec:noredovergps}

To describe the poset of connected reductive groups, one needs to describe the maximal elements. If $H$ is one such, then either: $H$ is maximal amongst all connected subgroups; or $H$ has a non-trivial central torus $S$, so that $H = C_G(S)$ is a Levi subgroup; or $H$ is semisimple and non-$G$-cr. 

\begin{example}
Let $(G,p) = (E_7,7)$. We exhibit a non-$G$-cr subgroup of type $G_2$ which is maximal amongst proper reductive subgroups of $G$. In fact, this is unique up to conjugacy (see \cite[\S6.1]{Litterick2018}). Let $P = QL$ be a parabolic subgroup of $G$, where the derived subgroup $\D(L)$ has type $E_6$. This has a maximal subgroup of type $F_4$ which itself has a maximal subgroup $\bar{H}$ of type $G_2$ when $p = 7$ (see Theorem \ref{t:maximalexcep} and \cite[Theorem 1(c)]{MR1100666}). The subgroup $\bar{H}$ turns out to be $\D(L)$-irr (\cite[Theorem~1]{Tho1}). Now the unipotent radical $Q$ is abelian, a $27$-dimensional module for $\D(L)$, which implies that $Q$ is isomorphic to either $L_{E_6}(\lambda_1)$ or its dual $L_{E_6}(\lambda_6)$. Moreover, $Q \downarrow \bar H = L(20) \oplus L(00)$, and $\opH^1(\bar{H},L(20))$ is $1$-dimensional.\footnote{This follows, for example, by a dimension-shifting argument \cite[II.2.1(4)]{Jan03} with the induced module $\opH^0(20) \cong L_{G_2}(00)/L_{G_2}(20)$.} This implies that $\opH^1(\bar{H},Q) \cong k$. Furthermore, the torus $Z(L)^\circ$ acts by scalars on $Q$, inducing conjugacy amongst the non-zero elements of $\opH^1(\bar{H},Q)$ and it follows that there is a unique $G$-conjugacy class $[H]$ of non-$G$-cr subgroups of type $G_2$ complementing $Q$ in $Q\bar H$.

In \cite[\S 10]{Litterick2018} it is proved that if $V$ is the $56$-dimensional module for $E_7$ then $V \downarrow H = T(20) \oplus T(20)$ where $T(20)=L(00)/L(20)/L(00)$ is tilting, of high weight $20$. It follows that any reductive overgroup of $H$ acts on $V$ either indecomposably or with two indecomposable summands of dimension $\dim T(20) = 28$. Inspecting the maximal subgroups of $G$ and their actions on $V$, we see the only plausible maximal reductive connected overgroup has type $A_7$. But the only non-trivial $8$-dimensional $G_2$ modules are Frobenius twists of $L(10) \oplus L(00)$, and so any $G_2$ subgroup of $A_7$ is contained in a Levi subgroup $A_6$. But these act on $V$ with four indecomposable summands, which rules out $A_7$ as a reductive overgroup of $H$.
\end{example}

There are several more instances of non-$G$-cr semisimple subgroups with no proper connected reductive overgroups, which are thus maximal amongst connected reductive subgroups of $G$. The following partial result begins to extend Theorem \ref{t:maximalexcep} towards describing the classes of maximal connected reductive subgroups. It can be deduced by combining Theorem \ref{t:maximalexcep} and the main results of the references given for Theorem \ref{t:nonGcrF4E8}. Out of interest, we note here a geometric interpretation: Since reductive subgroups of $G$ correspond to affine coset spaces \cite{MR437549}, a subgroup $H$ which is maximal amongst reductive subgroups corresponds to $G/H$ being minimal (with respect to $G$-quotients) amongst non-trivial affine homogeneous $G$-spaces. Then $H$ being maximal amongst \emph{connected} reductive subgroups means $G/H$ is minimal up to a \emph{finite-sheeted} quotient.

\begin{theorem} \label{t:maximalred}
Let $G$ be a simple algebraic group of exceptional type in characteristic $p$, and let $M$ be maximal amongst connected reductive subgroups of $G$. Then one of the following holds: 
\begin{enumerate}[label=\normalfont(\roman*)]
\item $M$ is maximal amongst connected subgroups and is $G$-conjugate to precisely one subgroup in Table \ref{tab:maximalexcep}; 
\item $M$ is a Levi subgroup, with $(G,\D(M)) = (E_6,D_5)$ or $(E_7,E_6)$;
\item $(G,p) = (F_4,3)$ and $M$ is non-$G$-cr of type $A_1$; 
\item $(G,p) = (F_4,2)$ and $M$ is non-$G$-cr of type $B_2$ or $A_1A_1$; 
\item $(G,p) = (E_7,7)$ and $M$ is non-$G$-cr of type $G_2$ (unique up to conjugacy);  
\item $G$ has type $E_{6}$, $E_7$ or $E_8$ and $p$ is bad for $G$. \label{maxred-case-vi}
\end{enumerate}                                                                                
\end{theorem}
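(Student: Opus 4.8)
\emph{Strategy.} The plan is to dichotomise first on whether $M$ is maximal among \emph{all} connected subgroups of $G$, and, when it is not, on whether $M$ is $G$-cr. If $M$ is maximal among connected subgroups, then by the Borel--Tits theorem (Theorem~\ref{thm:bt}) it is either parabolic or reductive; being reductive, it appears in Table~\ref{tab:maximalexcep} by Theorem~\ref{t:maximalexcep}, which is case~(i). So assume $M$ is not maximal among connected subgroups; choosing a connected subgroup of largest dimension strictly between $M$ and $G$ produces a maximal connected subgroup $N$ with $M\subsetneq N$. By Theorem~\ref{t:maximalexcep} the subgroup $N$ is parabolic or reductive, and it cannot be reductive without contradicting the maximality of $M$ among connected reductive subgroups. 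Hence $N=P$ is a proper parabolic subgroup, and we split according to the $G$-complete reducibility of $M$.

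\emph{When $M$ is $G$-cr.} By Definition~\ref{def:gcr} the subgroup $M$ lies in a Levi factor $L$ of $P$; since $P$ is proper, $L\subsetneq G$ is a proper connected reductive Levi subgroup, so maximality of $M$ forces $M=L$. As $L$ contains a maximal torus of $G$, every connected reductive overgroup of $L$ in $G$ contains a maximal torus, hence is a maximal-rank connected reductive subgroup, and these are enumerated by the Borel--de-Siebenthal algorithm. So the task reduces to a finite combinatorial check: for each conjugacy class of proper Levi subgroups $L$ of each exceptional $G$, decide whether $L$ lies in a strictly larger maximal-rank connected reductive subgroup. Reading this off the extended Dynkin diagrams, this always happens except when $\D(L)$ has type $D_5$ with $G=E_6$, or type $E_6$ with $G=E_7$; a short module-theoretic check (using the $27$-dimensional module for $E_6$ and the $56$-dimensional module for $E_7$) confirms that these two Levis have no proper connected reductive overgroup, giving case~(ii).

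\emph{When $M$ is non-$G$-cr.} First, $M$ is semisimple: put $S=Z(M)^{\circ}$ and suppose $S$ is a non-trivial torus. Since $Z(G)$ is finite, $C_G(S)$ is then a proper Levi subgroup of $G$ containing $M$, so maximality gives $M=C_G(S)$; but $S$ is linearly reductive, hence $G$-cr (Theorem~\ref{thm:BMRcentraliser}), so $C_G(S)$ is $G$-cr by Theorem~\ref{thm:BMRClifford}\ref{gcr-props-ii}, contradicting that $M$ is non-$G$-cr. Hence $S$ is trivial, $M=\D(M)$ is semisimple, and $M$ is a non-$G$-cr semisimple subgroup maximal among connected reductive subgroups, so it occurs in the classification of Theorem~\ref{t:nonGcrF4E8}. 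If $M$ falls under case~\ref{case-eight} of that theorem --- $G$ of type $E_6$, $E_7$ or $E_8$ and $p$ bad --- then $M$ is recorded in case~(vi), which remains open. Otherwise the isogeny type and $G$-class of $M$ are known explicitly (from \cite{Litterick2018,GanTho,ThomasIrreducible}), and one must decide for each entry $H$ on the list whether it has a proper connected reductive overgroup. Comparing composition factors and socle series of $H$ on the minimal and adjoint modules of $G$ against Table~\ref{tab:maximalexcep}, in the style of \S\ref{sec:noredovergps}, one checks that every entry has a proper reductive --- indeed reductive maximal connected --- overgroup, \emph{except}: the type-$A_1$ subgroup for $(G,p)=(F_4,3)$ (case~(iii)); the type-$B_2$ and type-$A_1A_1$ subgroups for $(G,p)=(F_4,2)$ (case~(iv)); and the unique type-$G_2$ subgroup for $(G,p)=(E_7,7)$ (case~(v)). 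In particular every non-$G$-cr semisimple subgroup of $G_2$, and every one in $E_6$, $E_7$, $E_8$ in good characteristic, has a proper reductive overgroup and contributes nothing new.

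\emph{Main obstacle.} The Levi bookkeeping in the $G$-cr case is routine. The substantive work is the last step of the non-$G$-cr case: for each non-$G$-cr semisimple subgroup on the lengthy list one must either exhibit or rule out a proper connected reductive overgroup, and ruling one out needs tight control of the restrictions of the minimal and adjoint modules --- for instance the type-$G_2$ subgroup of $E_7$ at $p=7$ has its $56$-dimensional module restricting as $T(20)^2$, which forces any reductive overgroup to act with $28$-dimensional indecomposable summands and thereby eliminates every candidate from Table~\ref{tab:maximalexcep}. Since the needed module data are tabulated in \cite{Litterick2018,GanTho,ThomasIrreducible}, the argument is a long but finite case analysis; the bad-characteristic subcases of $E_6$, $E_7$, $E_8$ are intentionally left unresolved.
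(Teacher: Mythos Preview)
Your proposal is correct and follows precisely the route the paper intends. The paper does not supply a standalone proof of this theorem; it is presented as a synthesis of the preceding classifications, with the single explanatory sentence preceding the statement (``either it has a non-trivial central torus $S$, so $H = C_G(S)$ is a Levi subgroup; or $H$ is semisimple and non-$G$-cr'') and the worked $G_2 \subset E_7$ example in \S\ref{sec:noredovergps} indicating exactly the dichotomy and module-comparison method you describe. Your write-up is in fact more careful than the paper's one-line justification, since you separate out case~(i) explicitly rather than folding it into the semisimple branch.

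One small point worth making explicit in your non-$G$-cr step: when ruling out proper reductive overgroups for the exceptional subgroups in (iii)--(v), comparing against Table~\ref{tab:maximalexcep} alone is not logically sufficient, since a priori a proper reductive overgroup could itself be a Levi or another non-$G$-cr subgroup rather than a reductive maximal connected subgroup. You cover the Levi possibility implicitly via the centraliser data in \cite{Litterick2018} (a Levi overgroup forces a non-trivial torus in $C_G(M)$), and containment in a larger non-$G$-cr subgroup is ruled out by the same classification you are already consulting; but it would strengthen the exposition to say so.
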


\begin{remark}\leavevmode
Case \ref{maxred-case-vi} will contain many conjugacy classes of subgroups. An interesting example is a class of subgroups of type $A_2$ in $G = E_8$ which act on the adjoint module with indecomposable summands of dimension $240$ and $8$, which already precludes its containment in a proper connected reductive subgroup. 
\end{remark}

\section{Further directions and related problems} \label{sec:further}

\subsection{Hereditary subgroups} \label{s:hereditary}

Recall from \cite{MR2167207} and \S\ref{ss:criteria} that one of Serre's reasons to formalise $G$-complete reducibility was for the study of converse theorems. An archetypal question \cite[\S5.3, Remarque]{MR2167207} is: 

\textit{When does the exterior square of a module being semisimple imply that the original module is semisimple?} 

This can be translated into $G$-complete reducibility as follows. Let $H$ be a semisimple algebraic group, $V$ an $H$-module and suppose that $\wedge^2(V)$ is semisimple. The action of $\SL(V)$ on $\wedge^2(V)$ furnishes inclusions $\bar H\subseteq M\subseteq G := \text{SL}(\wedge^2(V))$, where $\bar H$ and $M$ are the images in $G$ of the groups $H$ and $\SL(V)$, respectively. Then the question above asks whether $H$ being $G$-cr implies that $H$ is $M$-cr.

\begin{defn} Let $M$ be a connected subgroup of a reductive group $G$. We define $M$ to be \emph{$G$-ascending hereditary} ($G$-ah) if, for all connected subgroups $H$ of $M$, if $H$ is $M$-cr then $H$ is $G$-cr. And $M$ is defined to be \emph{$G$-descending hereditary} ($G$-dh) if, for all connected subgroups $H$ of $M$, if $H$ is $G$-cr then $H$ is $M$-cr. We define $M$ to be \emph{$G$-hereditary} if it is both $G$-ah and $G$-dh. 
\end{defn}

Let $G$ be a simple algebraic group with subgroups $H \subset M$. Theorem~\ref{thm:BMRcentraliser} says that centralisers of linearly reductive subgroups of $G$, such as Levi subgroups, are $G$-hereditary. Theorem~\ref{thm:sep-red} gives criteria for a subgroup $M$ to be $G$-dh. We have also seen examples of non-hereditary subgroups. Theorem~\ref{t:nonGcrG2} shows that in $G = G_2$, the subgroup $H = Z_1$ is $M$-irreducible for $M = A_1\tilde{A}_1$, but that $H$ is non-$G$-cr; so $M$ is non-$G$-ah. The following example shows that non-$G$-dh subgroups also exist:   

\begin{example} \label{ex:G2inF4again}
Let $G = F_4$ and $p=2$. Then $G$ contains subgroups $C_4$ and $B_4$, which respectively contain subgroups $\tilde{D}_4$ and $D_4$, and these in turn respectively contain simple subgroups $H_1$ and $H_2$ of type $G_2$. The simple module $L_{C_4}(\lambda_1) \downarrow H_1 = T(10)$, and the Weyl module $V_{B_4}(\lambda_1) \downarrow H_2 = T(10) \oplus 00$. These subgroups are swapped by the exceptional isogeny of $G$. By Proposition~\ref{prop:irredclass}, $H_1$ is non-$C_4$-cr and thus $H_2$ is non-$B_4$-cr. However, in Example~\ref{ex:G2inF4E6} we saw that every subgroup of type $G_2$ is $G$-cr. So $H_1$ and $H_2$ are examples of $G$-cr subgroups that are non-$M$-cr in some reductive maximal subgroup $M$ of $G$. Therefore, $M$ is not $G$-dh. 
\end{example}

Given the plethora of results (\S\ref{ss:criteria}, \cite{MR2178661} and elsewhere) which guarantee $G$-hereditary behaviour, it is natural to ask how often such behaviour fails. Ongoing work of the first and third authors seeks more precise results in this vein. For instance, it is very uncommon for a subgroup $H$ in a reductive pair $(G,H)$ (cf.~p.~\pageref{par:reductive-pair}) to be non-$G$-dh, in fact based on empirical evidence the first and third authors speculate:
\begin{conjecture} 
Let $(G,H)$ be a reductive pair of algebraic groups in characteristic $p$. If $H$ is non-$G$-dh then $p=2$. 
\end{conjecture}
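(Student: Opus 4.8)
The plan is to pivot immediately to the separability criterion Theorem~\ref{thm:sep-red}. Write the reductive pair as $(G,H)$ and suppose, for contradiction, that $H$ is non-$G$-dh: there is a connected $K\subseteq H$ with $K$ $G$-cr but not $H$-cr. Applying Theorem~\ref{thm:sep-red} with its ``$M$'' taken to be our $H$, if $K$ were a separable subgroup of $G$ then $K$ being $G$-cr would force $K$ to be $H$-cr. Hence $K$ must be \emph{non-separable} in $G$. So the whole conjecture reduces to: \emph{no reductive pair $(G,H)$ in characteristic $p\geq 3$ admits a connected subgroup $K\subseteq H$ that is simultaneously $G$-cr, non-$H$-cr, and non-separable in $G$.} A natural first milestone, which would finish the proof outright, is the cleaner sub-statement that in a reductive pair with $p\geq 3$ \emph{every} subgroup of $H$ is already separable in $G$; here I would try to exploit the splitting $\Lie(G)=\Lie(H)\oplus\m$ of $H$-modules to bootstrap separability in $H$ (which by \cite{MR3042602} can fail only mildly when $p\geq 3$) up to separability in $G$.

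The next step handles the bulk of characteristics for free. By the main theorem of \cite{MR3042602}, all closed subgroup schemes of $G$ are separable whenever $p=0$ or $p$ is pretty good for $G$; for such $(G,p)$ the reduction above instantly yields that $H$ is $G$-dh. One is therefore left with the short finite list of non-pretty-good primes: $p\in\{2,3\}$ for an arbitrary $G$, $p=5$ when $G$ has a simple factor of type $E_8$, and the type-dependent small primes for classical factors (most notably $p\mid n+1$ for factors related to $\SL_{n+1}$, and $p=2$ for symplectic and orthogonal factors). Since the conclusion sought is $p=2$, the genuine content of the conjecture is exactly the claim that none of the surviving cases --- $p=3$ for every $G$, $p=5$ involving $E_8$, and the $p\mid n+1$ cases for type $A$ --- can give rise to the bad configuration $K$ above, i.e.\ that a $G$-cr subgroup of $H$ is automatically $H$-cr despite the presence of non-separable subgroups.

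For those remaining cases I would follow the cohomological machinery of \S\ref{ss:cohom}: a hypothetical non-$H$-cr $K$ sits inside a parabolic $P_H=Q_HL_H$ of $H$ that is minimal for $K$, with image $\bar K=\pi(K)$ being $L_H$-irr (Lemma~\ref{lem:h-to-pi-h}); the failure of $K$ to be $H$-cr is then encoded as a nontrivial class in the pointed set $\opH^1(\bar K,Q_H)$, which is analysed through the level filtration $Q_{H,i}/Q_{H,i+1}$ (Weyl modules for $L_H$) and the long exact sequence \eqref{eq:long}. The goal is to show every such class dies when $p\geq 3$ and $(G,H)$ is a reductive pair, the splitting of $\Lie(G)$ being used to pin down the $\bar K$-module structure of the level quotients and to bound the fixed-point and first-cohomology spaces feeding into \eqref{eq:long}; absent a uniform vanishing statement one falls back on a finite verification over the triples $(\text{type of }H,\text{type of }G,p)$ left standing after the previous paragraph, in the style of \cite{Tho1,Tho2,GanTho}. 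I expect the crux to be precisely this cohomological step: the nonabelian $\opH^1$, and the non-separability it has to be compatible with, are honest characteristic-$3$ (and characteristic-$5$ for $E_8$) phenomena that do not obviously collapse, so some case analysis seems hard to avoid; a secondary nuisance is that $\bar K$ is only reductive, not necessarily semisimple, so off-the-shelf $\opH^1$-vanishing theorems for semisimple groups do not apply verbatim. Finally, the bound is sharp --- non-$G$-dh behaviour genuinely occurs at $p=2$, cf.\ Examples~\ref{eg:Dn-cr} and~\ref{ex:G2inF4again} --- so any successful argument must break down exactly there.
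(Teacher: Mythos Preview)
The statement you are trying to prove is a \emph{Conjecture} in the paper, not a theorem. The authors explicitly introduce it with ``based on empirical evidence we speculate'' and offer no proof; there is nothing in the paper to compare your argument against. So the relevant question is whether your proposal actually settles an open problem.

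It does not. Your opening reduction via Theorem~\ref{thm:sep-red} is correct and well-known: a witness $K$ to non-$G$-dh behaviour must be non-separable in $G$, and the pretty-good-prime result of \cite{MR3042602} then disposes of all but a short list of primes for each simple factor of $G$. But after that you have only an outline. Your ``first milestone'' --- that in a reductive pair with $p\ge 3$ every subgroup of $H$ is separable in $G$ --- is false as stated: take $H=G$, which is trivially a reductive pair, and any non-separable subgroup of $G$ in a non-pretty-good prime (these certainly exist, e.g.\ in type $A_{p-1}$). So this milestone cannot ``finish the proof outright''. Your fallback is a ``finite verification over the triples $(\text{type of }H,\text{type of }G,p)$'', but this list is not finite: for every odd prime $p$ there are infinitely many ranks $n$ with $p\mid n+1$, and for each such $G$ of type $A_n$ there are infinitely many reductive $H$ to consider. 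The cohomological machinery you invoke from \S\ref{ss:cohom} is the right toolkit, but you have not identified any mechanism, uniform or otherwise, that forces the relevant $\opH^1(\bar K,Q_H)$ to vanish (or its nontrivial classes to be incompatible with $G$-complete reducibility) when $p\ge 3$.

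In short: your reduction is sound and is presumably part of why the authors believe the conjecture, but the proposal remains a sketch of where the difficulty lies rather than a proof. The conjecture is open.
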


\subsection{Unipotent elements in exceptional algebraic groups} \label{ss:unipclasses}

Much effort has been spent studying unipotent elements in algebraic groups. It is even non-trivial to show that there are finitely many unipotent classes in the exceptional groups. There is an extensive literature on this and the book \cite{MR2883501} contains a comprehensive treatment. We mention those results most closely related to subgroup structure.

For many applications it is useful to understand how an embedding $H \to G$ fuses classes of unipotent elements. When $H$ is a reductive maximal connected subgroup this has been completely determined by Lawther in \cite{MR2526390} (supplemented by \cite{MR4375734} for the newly-discovered maximal subgroup of type $F_4$ in $E_8$ when $p=3$).   

One can flip this question and ask: given a unipotent element, what are its overgroups? For example, {\it regular unipotent elements} are those whose centralisers have the smallest possible dimension (the rank of $G$), and these are all $G$-conjugate. Overgroups of regular unipotent elements have been heavily studied, first of all by Saxl--Seitz in \cite{MR1438641}, classifying the maximal positive-dimensional reductive subgroups containing a regular unipotent element. Extending this to all positive-dimensional reductive subgroups containing a regular unipotent element is difficult. Suppose that $H$ is a subgroup of $G$ containing a regular unipotent element. Testerman--Zalesski \cite[Thm.~1.2]{MR2988707} provided a full classification of the connected reductive subgroups containing a regular unipotent element. An important ingredient is to show that if $H$ is connected then it is $G$-irr. If instead $H$ is only assumed to be positive-dimensional then Malle--Testerman \cite[Theorem~1]{MR4312324} show that either $H$ is $G$-irr or $H^\circ$ is a torus; the latter case does in fact occur. Indicative of the narrative in Part I, Bate--Martin--R\"{o}hrle in \cite{MR4386351} were able to produce a uniform proof of \cite[Theorem~1.2]{MR2988707} and \cite[Theorem~1]{MR4312324}, and further generalisations to disconnected groups, finite groups of Lie type and Lie algebras, without intricate case-by-case considerations. A key ingredient in their proof was the observation of Steinberg that regular unipotent elements normalise a unique Borel subgroup of $G$.  

\subsection{Finite subgroups and groups of Lie type} \label{ss:maxfinsub}

Historically, one of the main motivations for studying maximal and then reductive subgroups of exceptional algebraic groups was to deduce results for the exceptional finite groups of Lie type, see \cite{MR1994964} for the work up to the early 2000s. A related problem is to understand finite subgroups of the exceptional algebraic groups, whose study cannot employ any of the techniques requiring connectedness of the subgroup. In attempting to use the strategy of \S\ref{sec:strat} for a simple algebraic group $G$ of exceptional type, the primary difficulty is in classifying $G$-irr subgroups, since ad-hoc methods are required in place of uniform statements about representations of reductive groups. A result of Borovik \cite[Theorem 1]{MR1066315} quickly reduces one to studying almost-simple finite subgroups, and the isomorphism types of finite simple subgroups have been enumerated by Cohen, Griess, Serre, Wales and others (e.g.~\cite{MR1320515,MR1066315,MR1065645}) over the complex numbers using character-theoretic methods, and by Liebeck and Seitz \cite{MR1717629} in positive characteristic.

In positive characteristic, it is essential to understand \emph{generic subgroups}, i.e.~finite groups of Lie type $H(q)$ with embeddings $H(q) \to G$ which factor through an inclusion $H \to G$ of algebraic groups. The main result in this direction is due to Liebeck and Seitz \cite{MR1458329}, giving an explicit bound on $q$ (usually $q > 9$) ensuring that \emph{all} embeddings $H(q) \to G$ arise in this fashion.

Further progress on \emph{non-generic} subgroups has been made by the first author \cite{MR3803556} by comparing Brauer characters of finite simple groups with the Brauer traces of elements of exceptional algebraic groups $G$ on low-dimensional modules, which limits the composition factors of simple groups acting on these modules. This carries sufficient information to rule out $G$-irr subgroups, for instance a subgroup fixing a vector on $\Lie(G)$ lies in the corresponding stabiliser, which is often parabolic. One can also rule out the existence of non-$G$-cr subgroups: For instance, if a subgroup is contained in a parabolic subgroup of some algebraic group $G$, then it normalises the unipotent radical, and the Lie algebra of this is a submodule of $\Lie(G)$. So if the subgroup has no composition factors on $\Lie(G)$ with non-zero cohomology, the subgroup cannot have non-zero cohomology in its action on the radical, so is $G$-cr. Craven \cite{MR3694291} has extended these techniques to also bring in Jordan block sizes of unipotent elements of finite groups acting on the relevant modules, deriving still stronger conditions and ruling out further subgroup types.

For \emph{maximal} subgroups of finite groups of Lie type, there has been considerable recent progress. The maximal subgroups of ${}^2B_2(q)$, ${}^2F_4(q)$, ${}^3D_4(q)$, ${}^2G_2(q)$ and $G_2(q)$ have been classified by Cooperstein, Kleidman, Malle, Suzuki \cite{MR618376,MR937609,MR1106340}; leaving $F_4(q)$, $E_6^\epsilon(q)$, $E_7(q)$ and $E_8(q)$ to consider. The maximal subgroups of these groups have been completely classified for some very small $q$ (e.g.~$E_7(2)$ in \cite{MR3361643}) and there has been considerable progress by Magaard \cite{MR2638705} in the case $F_4(q)$ and Aschbacher \cite{MR892190,MR928524,MR986684,MR1054997} in the case $E_6(q)$. The most recent work has led to a complete classification of the maximal subgroups of $F_4(q)$, $E_6(q)$ and ${}^2E_6(q)$ \cite{Craven21} and almost a complete classification for $E_7(q)$ \cite{Craven22b}. This all builds on previous work \cite{Craven22,Craven22c,MR3694291}. The maximal subgroups of $E_8(q)$ are also a work in progress by Craven. 

We round off our discussion here by mentioning one more way in which $G$-complete reducibility applies to finite groups of Lie type: Namely, through \emph{optimality}. The details are somewhat technical (cf.~\cite[Def.~5.17]{MR3042598}), but a key point is: A non-$G$-cr subgroup $H$ is contained in a \emph{canonical} parabolic subgroup of $G$, which is normalised by all automorphisms of $G$ which normalise $H$. Applying this to a Frobenius endomorphism $F$ of $G$, if $H$ is a non-$G$-cr finite subgroup of some group of Lie type $G(q) = G^F$, we get a method of constructing subgroups in between $H$ and $G(q)$, namely, $F$-fixed points of the corresponding canonical parabolic subgroup (cf.~\cite[Proposition~2.2]{MR2145743} and \cite[\S 4.1--4.2]{MR3803556}).

\subsection{Variations of complete reducibility} \label{s:variations}

Serre's definition \ref{def:gcr} admits generalisations in various directions. If $G$ is equipped with a Frobenius endomorphism $F$ and one considers only $F$-stable parabolic and Levi subgroups, one arrives at so called `$F$-complete reducibility' \cite{MR2783312}. In another direction, as mentioned in the introduction, $G$-complete reducibility generalises to disconnected reductive groups, if one is willing to work instead with R-parabolic and R-Levi subgroups, and many results in $G$-complete reducibility generalise at once (cf.~\cite[\S 6]{MR2178661}). The resulting geometric invariant theory is in fact the natural setting in which to derive the most general results, only a handful of which have been mentioned in the present article. Since one is now working with collections of morphisms $\Gm \to G$, restricting these morphisms to land in a subgroup $K$ yields yet another generalisation, `$G$-complete reducibility with respect to $K$', and once again many natural results extend immediately \cite{MR4108921,MR4077197}.

Ultimately, one can view complete reducibility as a property of the \emph{spherical building} of $G$ \cite{MR2167207}. Here, opposite parabolics are opposite simplices, and non-$G$-cr subgroups correspond to contractible subcomplexes; omitting all details, we simply mention that recasting the above results in terms of the building allows one to unite the various generalisations above and derive yet stronger statements, e.g.~\cite{MR4440711}, and even extend to Euclidean buildings, Kac--Moody groups and other settings, see for instance \cite{MR2982240} and \cite[\S 4.3]{MR2499773}.

\subsection{Structure of the Lie algebra of exceptional algebraic groups} \label{ss:maxliealg}
Through the exponential and logarithm maps, classifying maximal connected subgroups of a complex algebraic group $G$ is equivalent to classifying maximal subalgebras of $\Lie(G)$ and indeed Dynkin's original work is set in this context. It was noticed by Chevalley that any complex finite-dimensional simple Lie algebra $\g$ has a $\Z$-basis. This means there is an integral form $\g_\Z$ from which one may build a Lie algebra $\g_R:=\g_\Z\otimes_\Z R$ over any commutative ring $R$. In particular, we may take $R=k$ for $k$ an algebraically closed field of characteristic $p>0$. There is typically more than one $\Z$-form available, leading to non-isomorphic Lie algebras over $k$; Chevalley's recipe gives the simply-connected form, i.e.~$\g_k\cong\Lie(G)$, where $G$ is the simply-connected algebraic group over $k$ of the same type (\cite[Chap.~VII]{MR499562}). 

It is natural to ask about maximal subalgebras of $\g=\g_k$ and more generally its subalgebra structure and their conjugacy under the adjoint action of $G$. Motivation for this question also arises from viewing $G$ as a scheme. In that context, one gets a much wider collection of subgroups, due to the presence of non-smooth subgroup schemes of $G$. At the most extreme end, a subgroup $H$ of $G$ is \emph{infinitesimal} if its only $k$-point is the identity element. The most natural non-trivial example of an infinitesimal subgroup is the first Frobenius kernel $G_1$ of $G$; one may view this as the functor from $k$-algebras to groups such such the $A$-points of $G_1$ applied to a $k$-algebra $A$ is the group $G_1(A)=\{x\in G(A)\mid F(x)=1\}$, where $F$ is the (standard) Frobenius map on $G$. 

Recall that a Lie algebra $\g$ is \emph{restricted} if it is equipped with a $[p]$-map $x\mapsto x^{[p]}$ which is $p$-semilinear in $k$ and satisfies $\ad(x^{[p]})(y)=\ad(x)^p(y)$. A subalgebra $\h\subseteq\g$ is a $p$-subalgebra if it is closed under the $[p]$-map. There is an equivalence between $G_1$ and the Lie algebra $\g$ in the following senses: $\Lie(G_1)=\g$ is a restricted Lie algebra; any finite-dimensional restricted Lie algebra $\mathfrak{k}$ is $\Lie(K)$ for a unique connected height-one group scheme $K$; under this correspondence, any $p$-subalgebra $\h$ of $\g$ maps to a unique subgroup $H$ of $G_1$; the finite-dimensional representation theory of $G_1$ is equivalent to the finite-dimensional restricted representation theory of $\g$. For more on this, see the article of Brion from this volume.

Moving from the smooth subgroups of $G$ to the subalgebras of $\g$ introduces many new and difficult problems. First, the classification of simple Lie algebras in positive characteristic \cite{PS06} is vastly more complicated than that of the algebraic groups, and is only complete when $p>3$. Second, semisimple subalgebras are not the sums of simple Lie algebras, or even closely related to them \cite[\S3.3]{Str04}. Third, it is not in general true that Theorem \ref{thm:bt} has an analogue for $\g$, and indeed maximal non-semisimple subalgebras of $\g$ do not have to be parabolic---where a Lie subalgebra of $\g$ is called \emph{parabolic} if it is the Lie algebra of a parabolic subgroup. Indeed, \cite[p.~149]{Str04} describes (all) irreducible representations of the soluble Heisenberg Lie algebras, most of which have dimensions divisible by $p$.

Circumventing these problems in the case that $G$ is classical is wide open. But at least when $G$ is of exceptional type in good characteristic, these problems have been dealt with in \cite{HSMax}, \cite{Pre17} and \cite{PSMax}. The analogue of the Liebeck--Seitz classification of maximal connected subgroups of $G$ holds, with certain exceptions. For example, for $p\geq 3$ the first Witt algebra $W_1:=\Der(k[X]/X^p)$ is a simple Lie algebra of dimension $p$ and appears between $G$ and its regular $\sl_2$ subalgebra whenever $p=h+1$ where $h$ is the Coxeter number of $G$. There are some maximal semisimple Lie algebras when $G$ has type $E_7$ and $p=5$ or $7$, which have nothing to do with semisimple subgroups of $G$. We also point out that \cite[Corollary~1.4]{PSMax} establishes an exact analogue of the Borel--Tits Theorem for exceptional Lie algebras in good characteristic.

One would like to consider the analogues for Lie algebras of the main problem addressed in this article. The following definition was given in \cite{McN07} and developed in \cite{MR2860266}.

\begin{defn} Let $\g=\Lie(G)$ for $G$ a reductive algebraic group. Then a subalgebra $\h$ of $\g$ is $G$-cr if whenever $\h$ is in a parabolic subalgebra $\p=\Lie(P)$ of $G$, then $\h$ is in a Levi subalgebra $\l=\Lie(L)$ of $\p$, where $P=QL$ is a Levi decomposition of $P$.\end{defn}

An analogue of Theorem \ref{thm:gcr-red} for Lie algebras (building on work in \cite{HS16}) is given in \cite[Theorem~1.3]{ST18}:

\begin{theorem}\label{thm:gcralg}Let $G$ be a connected reductive algebraic group in characteristic $p$ with Lie algebra $\g$. Suppose that $\h$ is a semisimple subalgebra of $\g$ and $p>h$. Then $\h$ is $G$-cr.\end{theorem}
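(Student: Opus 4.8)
The plan is to run the Lie-algebra analogue of the ``reductive $\Rightarrow$ $G$-cr'' implication of Theorem~\ref{thm:gcr-red}, using the cocycle machinery of \S\ref{ss:cohom} with group cohomology of unipotent radicals replaced by cohomology of $\h$, and feeding in a vanishing statement for first cohomology of ``small'' modules that holds precisely because $p>h$. First I would reduce to $G$ simple: every parabolic, resp.\ Levi, subalgebra of $\Lie(G)$ is the product of $\Lie(Z(G)^{\circ})$ with a parabolic, resp.\ Levi, subalgebra in each simple factor, and the Coxeter number of $G$ is the maximum of those of its simple factors, so the simple case implies the general one. (If $p=0$ everything is classical: $\h$ is completely reducible on $\g$, hence on $\q$ below, and Whitehead's lemma makes the relevant cocycle a coboundary.)

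So suppose $G$ is simple, $p>h$, and $\h\subseteq\p:=\Lie(P)$ for a parabolic subgroup $P=QL$; put $\q:=\Lie(Q)=\RR_u(\p)$ and let $\pi\colon\p\to\p/\q=\l$ be the projection onto a Levi subalgebra. Since $\h$ is semisimple it has no nonzero solvable ideal, and $\h\cap\q$ is a nilpotent (hence solvable) ideal of $\h$, so $\h\cap\q=0$ and $\pi$ restricts to an isomorphism $\h\cong\bar\h:=\pi(\h)\subseteq\l$. Writing each $x\in\h$ uniquely as $x=c(x)+\pi(x)$ with $c(x)\in\q$, expanding $[x,y]$ and extracting the $\q$-component shows that $c\colon\bar\h\to\q$ satisfies the non-abelian $1$-cocycle identity of \S\ref{ss:cohom}. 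Next I would invoke the level filtration $\q=\q_1\supseteq\q_2\supseteq\cdots\supseteq\q_n=0$ by ideals of $\p$ (as on p.~\pageref{parabsec}), whose layers $\q_i/\q_{i+1}$ are $\bar\h$-modules on which $\q$ acts trivially, since $[\q,\q_i]\subseteq\q_{i+1}$ by the Chevalley relations. Then one descends layer by layer: the image $\bar c_1$ of $c$ in $\q_1/\q_2$ is an honest $1$-cocycle, and if its class in $\opH^1(\bar\h,\q_1/\q_2)$ is zero we subtract a coboundary to push $c$ into $\q_2$, and repeat. Hence, if $\opH^1(\bar\h,\q_i/\q_{i+1})=0$ for every $i$, then $c$ can be corrected to $0$, so $\h$ is $\Ad(Q)$-conjugate to $\bar\h\subseteq\l$; as $\p$ was an arbitrary parabolic subalgebra containing $\h$, this gives $G$-complete reducibility. (Unlike the group case one needs neither minimality of $P$ nor the exact sequence \eqref{eq:long} in its full $\opH^2$-obstruction form, because all the contributing terms will turn out to vanish outright.)

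This reduces the theorem to showing $\opH^1(\h,M)=0$ for $M$ each layer $\q_i/\q_{i+1}$. The point is that these layers are Weyl modules $V(\lambda)$ for $L$ (equivalently $\D(L)$), and every root occurring in $\q$ has shape a subsum of the highest root of $G$, which has height $h-1<p$; thus all composition factors of $\q$, viewed as $\l$- and hence $\h$-modules, have highest weight lying in the lowest alcove. For such small modules the desired vanishing is exactly the Lie-algebra counterpart of the cohomology computations for small Weyl modules in \cite{HS16}: passing to the restricted structure of $\h$ and running the Lyndon--Hochschild--Serre spectral sequence $\opH^i(\h/\h_1,\opH^j(\h_1,M))\Rightarrow\opH^{i+j}(\h,M)$ for the Frobenius kernel, the hypothesis $p>h$ forces the contributing terms to vanish and one assembles $\opH^1(\h,M)=0$.

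The hard part is precisely this last ingredient. One must establish, uniformly in $G$ and under the single hypothesis $p>h$, that small modules carry no first cohomology over an \emph{arbitrary} semisimple subalgebra $\h$ of $\g$ --- and such an $\h$ need not be the Lie algebra of an algebraic subgroup of $G$, since Cartan-type algebras such as $W_1$ already occur when $p=h+1$, so one cannot simply quote the group-theoretic Theorem~\ref{thm:gcr-red}. Everything else --- the reduction to simple $G$, the cocycle reformulation of conjugacy, and the layer-by-layer descent --- is formal bookkeeping; the genuine content, and the reason the result rests on \cite{HS16}, is the cohomological vanishing for small modules in the modular regime $p>h$.
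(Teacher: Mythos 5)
First, note that this survey does not actually prove Theorem~\ref{thm:gcralg}: it is quoted from \cite[Thm.~1.3]{ST18}, with the remark that it builds on \cite{HS16}, so your attempt can only be measured against that cited proof. Your reduction machinery does match its outline: place $\h$ in $\p=\Lie(P)$ with $P=QL$, note that $\h\cap\q$ is a nilpotent ideal of the semisimple algebra $\h$ and hence zero, view $\h$ as a complement to $\q$ in $\q\rtimes\bar\h$ given by a non-abelian $1$-cocycle $c$, and push $c$ down the level filtration of $\q$ using vanishing of $\opH^1(\bar\h,\q_i/\q_{i+1})$, with $\Ad(Q)$ supplying the conjugating elements; only first cohomology of the central layers is needed for this downward descent. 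All of that is sound bookkeeping, and you correctly identify where the real content lies.

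The gap is precisely at that point, and your sketch of how to fill it does not work. You propose to prove $\opH^1(\h,M)=0$ for the layers $M$ by ``passing to the restricted structure of $\h$'' and running the Lyndon--Hochschild--Serre spectral sequence for ``the Frobenius kernel'' $\h_1\triangleleft\h$. An arbitrary semisimple subalgebra $\h\subseteq\g$ is a bare Lie algebra: it has no Frobenius kernel and no quotient $\h/\h_1$, it need not be a $p$-subalgebra of $\g$, and---as you yourself observe---it need not be $\Lie(H)$ for any algebraic subgroup $H$, since Witt-type algebras occur already at $p=h+1$. So the spectral sequence you invoke does not exist, and no Whitehead/Casimir argument is available either, because $\h$ need not carry a nondegenerate invariant form and the layers do have trivial composition factors (e.g.\ the middle level $L(0)$ in the $G_2$ example of \S\ref{ss:cohom}); indeed even $\opH^1(\h,k)\cong(\h/[\h,\h])^{*}$ is not formally zero, since a modular Lie algebra with no nonzero solvable ideals need not be perfect. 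This uniform vanishing for small modules over arbitrary (possibly non-classical, non-restricted) semisimple subalgebras is the substantive theorem of \cite{HS16} on which the whole argument rests, and it is obtained there by entirely different means; as written, your proof reduces the statement to an unproved black box supported by an argument that cannot be run. A smaller quibble: ``every root of $\q$ has height $<h$'' is not by itself the right smallness criterion---one needs bounds on $\langle\lambda,\alpha^{\vee}\rangle$ for the highest weights of the layers against the coroots of $\D(L)$ (cf.\ \cite[Lemma~5.1]{Stewart21072013})---though your conclusion that $p>h$ makes the layers small in the required sense is correct.
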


In the case $\h\cong\sl_2$, the theorem interacts surprisingly closely with Kostant's uniqueness result about the embeddings of nilpotent elements into $\sl_2$-subalgebras: it builds on the Jacobson--Morozov theorem \cite{MR49882,MR0007750}, which says that for any complex finite-dimensional semisimple Lie algebra $\g=\Lie(G)$, there is a surjective map \[ \{\text{conjugacy classes of }\sl_2\text{-triples}\}\longrightarrow\{\text{nilpotent orbits in $\g$}\}, \tag{$\ast$}\label{eq:star}\]
where an $\sl_2$-triple is a triple $(e,h,f)\in \g^3$ satisfying $[h,e]=2e$, $[h,f]=-2f$, $[e,f]=h$. The surjective map is induced by sending $(e,h,f)$ to the nilpotent element $e$. So any such $e$ can be embedded into some $\sl_2$-triple. In \cite{Kos59}, Kostant showed that this can be done uniquely up to conjugacy by the centraliser $G_e$ of $e$; i.e.~the map \eqref{eq:star} is actually a bijection. Much work has been done on extending this important result into characteristic $p > 0$. 
We mention some critical contributions. In \cite{Pom80}, Pommerening showed that under the mild restriction that $p$ is a good prime for $G$, one can always find an $\sl_2$-subalgebra containing a given nilpotent element, but this may not be unique; in other words, the map \eqref{eq:star} is still surjective, but not necessarily injective.  In \cite{SpSt70} Springer and Steinberg prove that the uniqueness holds whenever $p\geq 4h-1$ and in his book \cite{Car93}, Carter uses an argument due to Spaltenstein to reduce this bound to $p>3h-3$; both proofs make use of an exponentiation argument. One use of Theorem \ref{thm:gcralg} is to prove the following.

\begin{theorem}\label{jmthm} Let $G$ be a connected reductive group in characteristic $p>2$ with Lie algebra $\g$. Then \eqref{eq:star} is a bijection if and only if $p>h$.\end{theorem}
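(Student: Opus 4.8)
The plan is to prove the two implications separately, in each case splitting according to whether $p$ is a good prime for $G$.

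\emph{Assume $p>h$.} Every coefficient of the highest root of $G$ is at most $h-1<p$, so $p$ is good --- indeed pretty good --- for $G$, and Pommerening's theorem \cite{Pom80} gives that \eqref{eq:star} is surjective; the substance is injectivity, and it is here that the bound $p>h$ (rather than merely $p$ good) is needed, through Theorem~\ref{thm:gcralg}. Given $\sl_2$-triples $(e,h_1,f_1)$ and $(e,h_2,f_2)$ with the same $e$, I would follow Kostant's strategy \cite{Kos59}: first reduce to $h_1=h_2$, then deduce $f_1=f_2$. Each $\s_i=\langle e,h_i,f_i\rangle\cong\sl_2$ is $G$-cr by Theorem~\ref{thm:gcralg}, and since $p$ does not divide $|Z(G_{\mathrm{sc}})|$ in this range, $h_i$ integrates to a cocharacter $\tau_i$ of $G$ with $\Ad(\tau_i(t))\cdot e=t^2e$. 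The key claim is that $G$-complete reducibility of $\s_i$ forces $\tau_i$ to be \emph{associated} to $e$ --- equivalently, that the parabolic subgroup $P_{\tau_i}$ determined by $\tau_i$ is the canonical parabolic of $e$, in the sense of the Borel--Tits analogue \cite{PSMax} and the nilpotent-orbit theory of \cite{MR2883501}. Granting this, the associated cocharacters of $e$ form a single $G_e$-orbit, so after conjugating I may assume $\tau_1=\tau_2=:\tau$ and $h_1=h_2=:h$; then $f_1-f_2$ centralises $e$ and has $\ad(h)$-weight $-2$, whereas $\g_e=\Lie(G_e)$ --- using separability, valid since $p$ is pretty good --- is contained in $\Lie(P_\tau)$, the sum of the non-negative $\tau$-weight spaces of $\g$, which carries no negative $\ad(h)$-weight. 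Hence $f_1=f_2$ and the triples are conjugate.

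\emph{For the converse} I would prove the contrapositive: if $2<p\le h$ then \eqref{eq:star} is not bijective. If $p$ is bad for $G$ --- given $p>2$, this forces $p=3$, or $p=5$ with $G=E_8$ --- then bijectivity fails already for a surjectivity reason, since some nilpotent orbit admits no $\sl_2$-triple; the offending orbit is visible in the tables of \cite{MR2883501}. If $p$ is good but $p\le h$, surjectivity holds by Pommerening but injectivity does not, and when $G$ is classical I would exhibit this by passing to a Levi subgroup of type $A_{p-1}$, which exists precisely because $p\le h$: for the regular nilpotent $e$ of $\sl_p\cong\Lie(A_{p-1})$, the Cartan matrix of $A_{p-1}$ is singular modulo $p$ (its determinant being $p$), so the equations determining the semisimple part of a triple containing $e$ have positive-dimensional solution space, producing a one-parameter family of $\sl_2$-triples no two of which are $G_e$-conjugate (confirmed by a brief computation inside $G_e$). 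For the finitely many remaining pairs $(G,p)$ with $G$ exceptional, $p$ good and $p\le h$, I would exhibit for each a specific nilpotent orbit carrying at least two non-conjugate triples, reading off the needed structure from \cite{MR2883501} (and from \cite{MR2526390} where fusion into maximal subgroups is convenient).

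The hard part will be the injectivity half for $p>h$. The older bounds $p\ge4h-1$ \cite{SpSt70} and $p>3h-3$ \cite{Car93} arose from exponentiation arguments that lose a multiplicative factor; attaining the optimal bound $p>h$ rests squarely on Theorem~\ref{thm:gcralg}, through the implication ``$\langle e,h,f\rangle$ is $G$-cr $\Rightarrow$ the cocharacter determined by $h$ is associated to $e$'' (equivalently ``$\g_e$ carries a non-negative $\ad(h)$-grading''). Establishing this implication cleanly, and verifying that it genuinely breaks down once $p\le h$, is the principal obstacle; a secondary one is isolating the explicit counterexamples for the exceptional pairs $(G,p)$ with $p$ good and $p\le h$, which requires a careful traversal of the nilpotent-orbit classification.
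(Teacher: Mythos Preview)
The paper is a survey and does not actually prove Theorem~\ref{jmthm}; it merely states it and attributes the proof to \cite{ST18}, remarking only that ``one use of Theorem~\ref{thm:gcralg} is to prove the following''. So there is no detailed argument in the paper to compare against. That said, your overall architecture --- Pommerening for surjectivity, Theorem~\ref{thm:gcralg} to force the cocharacter attached to $h_i$ to be associated to $e$, then the Kostant-style endgame --- is exactly the route indicated by the paper's one-line hint and is indeed how \cite{ST18} proceeds for the forward direction.

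There is, however, a genuine gap in your converse. Your plan for good $p\le h$ in the classical case is to pass to a Levi of type $A_{p-1}$, but such a Levi need not exist. For example, if $G$ is simple of type $B_n$ or $C_n$ then $h=2n$, and a Levi factor of type $A_m$ requires $m\le n-1$; so for any prime with $n<p\le 2n$ (and there is always at least one such prime) your reduction fails outright. The same obstruction arises in type $D_n$ for $n-1<p\le 2n-2$. You therefore still owe a counterexample in these ranges; one cannot simply defer to the $A_{p-1}$ picture. The fix in \cite{ST18} is to work with the regular nilpotent of $G$ itself (or another carefully chosen orbit) and exhibit directly a one-parameter family of non-conjugate triples, rather than trying to import the $\sl_p$ example through a Levi. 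Your treatment of the bad-prime case also asserts without justification that surjectivity of \eqref{eq:star} fails; this is true, but you should identify the offending orbit rather than gesturing at tables --- it is not entirely obvious which orbit to pick, and the verification is part of the content.
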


In fact, \cite{ST18} also considers a map \[\{\text{conjugacy classes of }\sl_2\text{-subalgebras}\}\rightarrow\{\text{nilpotent orbits in $\g$}\}\tag{$\ast\ast$}\label{eq:starstar},\]
and when a bijection exists, realises it in a natural way. The equivalence of bijections \eqref{eq:star} and \eqref{eq:starstar} is easily seen in large enough characteristics by exponentiation, but there are quite a few characteristics where there exists a bijection \eqref{eq:starstar}, but not \eqref{eq:star}.

Further progress on this theme has been made by Goodwin--Pengelly \cite{GP22}, characterising the subvarieties of nilpotent elements where bijections \eqref{eq:star} and \eqref{eq:starstar} hold.

\bibliographystyle{cambridgeauthordate}
\bibliography{LSTbiblio}	

\end{document}